\newcommand{\bu}{{\bf u} }
\newcommand{\Dt}{\Delta}
\newcommand{\be}{\begin{equation}}
\newcommand{\ee}{\end{equation}}
\newcommand{\ba}{\begin{array}}
\newcommand{\ea}{\end{array}}
\newcommand{\bea}{\begin{eqnarray}}
\newcommand{\eea}{\end{eqnarray}}
\newcommand{\beas}{\begin{eqnarray*}}
\newcommand{\eeas}{\end{eqnarray*}}
\newcommand{\bx}{{\bf x} }
\newcommand{\divv}{\nabla \cdot}
\newcommand{\cc}{\color{blue}}
\providecommand{\U}[1]{\protect\rule{.1in}{.1in}}
\newtheorem{condition}[theorem]{Condition}
\newtheorem{remark}[theorem]{Remark}
\begin{document}

\title{A Doubly Adaptive Penalty method for the Navier Stokes Equations\thanks{The research was partially supported by NSF grant DMS1817542 and DMS2110379.} }
\author{
Kiera Kean\thanks{Department of Mathematics, Univeristy of Pittsburgh, Pittsburgh, PA 15260 (kkh16@pitt.edu).} \and Xihui Xie\thanks{Department of Mathematics, Univeristy of Pittsburgh, Pittsburgh, PA 15260 (xix55@pitt.edu).} \and Shuxian Xu\thanks{Department of Mathematics, Univeristy of Pittsburgh, Pittsburgh, PA 15260 (shx34@pitt.edu).}}
\maketitle

\begin{abstract}
We develop, analyze and test adaptive penalty parameter methods. 
We prove unconditional stability for velocity when adapting the penalty parameter, $\epsilon,$ and stability of the velocity time derivative under a condition on the change of the penalty parameter, $\epsilon(t_{n+1})-\epsilon(t_n)$. The analysis and tests show that adapting $\epsilon(t_{n+1})$ in response to $\nabla\cdot u(t_n)$ removes the problem of picking $\epsilon$ and yields good approximations for the velocity. 
 We provide error analysis and numerical tests to support these results. We supplement the adaptive-$\epsilon$ method by also adapting the time-step. The penalty parameter $\epsilon$ and time-step are adapted independently.
 We further compare first, second and variable order time-step algorithms. 
Accurate recovery of pressure remains an open problem. 


\end{abstract}

\begin{keywords}
  Navier-Stokes equations, penalty, adaptive
\end{keywords}
\begin{AMS}
  65M12, 65M60
\end{AMS}

\section{Introduction}

The velocity and pressure of an incompressible, viscous fluid are given by the Navier-Stokes equations. Let $\bu$ denote the fluid velocity, $p$ the pressure, $\nu$ the kinematic viscosity and $f$ an external force:
\begin{align}
    \bu_t-\nu\Delta\bu+\bu\cdot\nabla\bu+\nabla p=f,&\quad
    \nabla\cdot\bu=0,\quad \forall (\bx,t)\in \Omega\times(0,T]. \label{nse1}
\end{align}
The velocity and pressure are coupled together by the incompressibility constraint $\nabla\cdot\bu=0$. Coupled systems require more memory to store and are more expensive to solve. 
Penalty methods and artificial compression methods relax the incompressibility condition and result in a pseudo-compressible system. This allows us to uncouple velocity and pressure, which will reduce storage space and computational complexity.
Penalty methods that allow complete elimination of the pressure variable are the simplest and fastest, and will be studied herein. 

Penalty methods replace $\nabla\cdot\bu=0$ with $\nabla\cdot\bu+\epsilon p=0\,$ where $0<\epsilon<<1$. The pressure can be eliminated using  $\nabla p=-\nabla(1/\epsilon \nabla\cdot\bu)$. As the pressure is entirely eliminated from the system, we do not need to solve for it at every timestep, leading to further increases in speed.  The accuracy of penalty methods is known to be very sensitive to the choice of $\epsilon$ (see Fig. 6.1). This sensitivity suggests considering $\epsilon$ a control and picking $\epsilon$ through a self-adaptive algorithm.
This problem of determining $\epsilon$ self-adaptively is considered herein. 

When adapting the parameter $\epsilon$, $\|\nabla\cdot\bu\|$ is monitored and used to make adjustment to $\epsilon$. 
The stability of the standard penalty method with variable $\epsilon$ is examined in Section \ref{stable-analysis}. No condition on the rate of change of $\epsilon$ is required for the stability of $\|\bu\|$. However, stability of $\|\bu_t\|$ is not unconditional. There is no restriction on the increase of $\epsilon$, however decreasing $\epsilon$ quickly will lead to growth in $\|\bu\|$.
In Section 3.2, we derive condition \eqref{epsilon-restriction}
\begin{equation*}
    (1-k\alpha)\epsilon_n\leq\epsilon_{n+1}\  \text{for some }\alpha>0,
\end{equation*}
 where $k$ is the step-size.
This condition is required for stability of $\|\bu_t\|.$ \Cref{fig:ut_spike} confirms that violating this condition leads to spikes of catastrophic growth in $\|\bu_t\|$. 

The utility of penalty methods lies in accurate velocity approximation at low cost by simple methods. Consistent with this intent, we couple the adaptive $\epsilon$ algorithm with simple, low cost time stepping methods based on the  backward Euler method. Simple time filters allow us to implement an effective variable order, variable time-step adaptive scheme, developing further an algorithm of \cite{guzel2018timefilter}. The self-adaptive $\epsilon$ penalty method can be easily implemented for both constant time-step and variable time-step methods. We develop, analyze and test these new algorithms that independently adapt the time-step $k$ and the penalty parameter $\epsilon$. 

In addition to adapting the time-step, we adapt the order of the method between first and second order.  This variable time-step variable order (VSVO) method performed better than both first and second-order methods in our tests (see \Cref{fig:variable-step} and \Cref{fig:ut_spike}).

.

The rest of the paper is organized as follows. In Section 2, we introduce important notation and preliminary results. In Section 3, stabilities of $\|\bu\|$ and $\|\bu_t\|$ for the variable $\epsilon$ penalty method with constant timestep is presented.
Section 4 presents an error estimate of the semi-discrete, variable $\epsilon$ method.  Using this we develop an effective algorithm which adapts $\epsilon$ and $k$ independently, presented in Section 5. We introduce four different algorithms, including the constant time-step and variable time-step variable $\epsilon$ method. Numerical tests are shown in Section 6 and open problems are presented in Section 7.
\subsection{Review of a Common Penalty Method}
Recall the incompressible Navier-Stokes equations, \eqref{nse1}.
Perturbing the continuity equation by adding a penalty term to the incompressibility condition and explicitly skew-symmetrizing the nonlinear term in the momentum equation in \eqref{nse1} results in the penalty Navier-Stokes equations: 
\begin{align}
    \bu_t-\nu\Delta\bu+\bu\cdot\nabla\bu+\frac{1}{2}(\nabla\cdot\bu)\bu+\nabla p=f,&\quad\label{pnse1}
    \\
    \nabla\cdot\bu+\epsilon p=0. \label{pnse1b}
\end{align}
By  \eqref{pnse1b}, $p = (-1/\epsilon)\nabla \cdot \bu$. Inserting this into \eqref{pnse1} results in a system of $\bu$ only, which is easier to solve than \eqref{nse1}: 
\begin{equation}\label{pnse_eliminate}
    \bu_{\epsilon,t}-\nu\Delta\bu_\epsilon+\bu_\epsilon\cdot\nabla\bu_\epsilon+\frac{1}{2}(\nabla\cdot\bu_\epsilon)\bu_\epsilon-\nabla(\frac{1}{\epsilon}\nabla\cdot\bu_\epsilon)=f.
\end{equation}
From Theorem 1.2 p.120 of Temam \cite{teman1968} we know $\lim_{\epsilon\to 0}(\bu_\epsilon(t),p_\epsilon(t))=(\bu(t),p(t))$. 

Consider the first-order discretization of \eqref{pnse1}-\eqref{pnse1b}. $k_n$ is the n$^{th}$ time-step, $\epsilon_n$ is the parameter $\epsilon$ at n$^{th}$ time-step, $t_0=0, t_n=t_{n-1}+k_n$. Let $\bu^*$ denote the standard (second order) linear extrapolation of $\bu$ at $t_{n+1}$:
\begin{equation*}
    \bu^*=\left(1+\frac{k_{n+1}}{k_n}\right)\bu^n-\frac{k_{n+1}}{k_n}\bu^{n-1}.
\end{equation*}
The backward Euler time discretization gives us 
\begin{align}
\frac{\bu^{n+1}-\bu^n}{k_{n+1}}+\bu^*\cdot\nabla\bu^{n+1}+\frac{1}{2}(\nabla\cdot\bu^*)\bu^{n+1}+\nabla p^{n+1}-\nu\Delta\bu^{n+1} &=f^{n+1},\label{pnseBE} \\
\nabla\cdot\bu^{n+1}+\epsilon_{n+1} p^{n+1} &=0.\label{pneBE2}
\end{align}
As before, we use $ p^{n+1}=\left((-1/\epsilon_{n+1})\nabla\cdot\bu^{n+1}\right)$, to uncouple \eqref{pnseBE}-\eqref{pneBE2} into the following time discrete, velocity only equation
\begin{equation}\label{discrete-stokes-nonlinear}
\frac{\bu^{n+1}-\bu^n}{k_{n+1}}+\bu^*\cdot \nabla \bu^{n+1}+\frac{1}{2}(\nabla\cdot\bu^*)\bu^{n+1} - \nu\Dt \bu^{n+1} - \nabla (\frac{1}{\epsilon_{n+1}}\nabla\cdot \bu^{n+1}) = f^{n+1}.
\end{equation}
For constant $\epsilon_{n+1}=\epsilon, k_{n+1} = k$, \eqref{discrete-stokes-nonlinear} is unconditionally stable by Theorem 4.1 of He and Li \cite{He2010penalty}. The analysis of the stability of the variable $\epsilon$, constant k method is found in Theorem 3.1. Analysis of stability of acceleration $\bu_t$ is found in Theorem 3.3.

\subsection{Related work}
Penalty methods were first introduced by Courant in 1943 \cite{courant1994variational}. They were first applied to the unsteady Navier-Stokes equations by Temam \cite{teman1968}.
Error estimates for continuous time, constant $\epsilon$, \eqref{pnse_eliminate} were proved by Shen in Theorem 4.1 p.395  \cite{Shen1995penalty}.
In Theorem 5.1 p.397, Shen further proved error estimates for the backward Euler time discretization of the penalty Navier-Stokes equations. This analysis suggests a choice of $\epsilon=k$.
Shen \cite{shen1992error} studied higher-order projection schemes in the semi-discrete form and propose a penalty-projection scheme with improved error estimates.
Prohl \cite{prohl1997projection} suggested a new analytical approach to the penalty method.
He \cite{He2005error}, He and Li \cite{He2010penalty} studied fully discrete penalty finite element methods and proved optimal error estimates with conditions on $\epsilon, \Dt t$ and mesh size $h$.

Bercovier and Engelman showed the velocity error of penalty methods is sensitive to the choice of $\epsilon$, see  \cite{BERCOVIER1979181}. If $\epsilon$ is too large, it will poorly model incompressible flow.
Choosing $\epsilon$ too small will cause numerical conditioning problems, see Hughes, Liu and Brooks  \cite{hughes1979finite}. The optimal choice of the penalty parameter also varies depending on the time and space discretization schemes used, see Shen \cite{Shen1995penalty}. \cite{hughes1979finite} introduced a theory for determining the penalty parameter, which only depends on the Reynolds number Re and viscosity $\mu$. 

The penalty method gives inaccurate pressure (see \Cref{constant_error} and \Cref{variable_error}), and we focus on the velocity accuracy in this paper. But pressure recovery is important when calculating quantities based on stresses, e.g. lift and drag coefficients.  The easiest way is by using $\nabla\cdot\bu+\epsilon p=0$ and solve for pressure. There are also other possibilities to recover pressure, e.g. Pressure Poisson equations and momentum equation, see Kean and Schneier \cite{Kean2019error}.

\subsection{Motivation For Choice of Estimator for $\epsilon$}
We choose an estimator to control the residual in the continuity equation, $\|\divv \bu_\epsilon\|$. The immediate choice is to adapt $\epsilon$ based on the size of $\|\divv \bu_\epsilon\|$. However, controlling the relative, not the absolute error is a more logical choice. 
Taking $L^2$ inner product of \eqref{pnse_eliminate} with $\bu_\epsilon$, we get:
\begin{equation*}
    \frac{1}{2}\frac{d}{dt}\|\bu_\epsilon\|^2+\nu\|\nabla\bu_\epsilon\|^2+\frac{1}{\epsilon}\|\nabla\cdot\bu_\epsilon\|^2=(f,\bu_\epsilon).
\end{equation*}
We aim to ensure $\nu\|\nabla\bu_\epsilon\|^2$ does not dominate  $\frac{1}{\epsilon}\|\nabla\cdot\bu_\epsilon\|^2$. This suggests an upper bound for $\epsilon$:

\begin{align*}
    \frac{1}{\epsilon}\|\nabla\cdot\bu_\epsilon\|^2\geq \nu\|\nabla\bu_\epsilon\|^2 
    \quad
    \implies
    \epsilon\leq \frac{1}{\nu}\left(\frac{\|\nabla\cdot\bu_\epsilon\|}{\|\nabla\bu_\epsilon\|}\right)^2.
\end{align*}
This motivates the choice of the estimator to be $\|\divv\bu_\epsilon\|/\|\nabla\bu_\epsilon\|,$ the relative residual. This has the additional benefits of being non-dimensional and independent of the size of $\bu_\epsilon$. Since $\nu$ is constant, scaling by $1/\nu$ is just a change of adaptive tolerance. 
The comparison of absolute and relative residual estimators is presented in Section 6.3.



\section{Notation and preliminaries}
We denote by $\|\cdot\|$ and $(\cdot,\cdot)$ the $L^2(\Omega)$ norm and inner product, respectively. We denote by $\|\cdot\|_{L^p}$ the $L^p(\Omega)$ norm. The velocity space $X$  and pressure space $Q$ are:
\begin{align*}
    &X:=(H_0^1(\Omega))^d,\ \text{where}\ H_0^1(\Omega)=\{v\in L^2(\Omega): \nabla v\in L^2(\Omega)\ \text{and}\ v=0\ \text{on}\ \partial\Omega\},\\
    &Q:=L^2_0(\Omega)=\{q\in L^2(\Omega): \int_\Omega q\ d\bx=0\}.
\end{align*}
Let $X^h\subset X$ be the finite element velocity space and $Q^h\subset Q$ be the finite element pressure space.
We assume that $(X^h, Q^h)$ are conforming and satisfy the following approximation properties and Condition 2.1:
\begin{equation}\label{approx-prop}
\begin{aligned}
    \inf_{v\in X^h}\|u-v\|&\leq Ch^{m+1}|u|_{m+1},\quad u\in H^{m+1}(\Omega)^d,\\
    \inf_{v\in X^h}\|\nabla(u-v)\|&\leq Ch^{m}|u|_{m+1},\quad u\in H^{m+1}(\Omega)^d,\\
    \inf_{q\in Q^h}\|p-q\|&\leq Ch^m|p|_m,\quad p\in H^m(\Omega).
\end{aligned}
\end{equation}
\begin{condition} (The Ladyzhenskaya-Babuska-Brezzi Condition (LBB$^h$) see p.62 \cite{cfdbook}).\\
Suppose $(X^h,Q^h)$ satisfies:
\begin{equation}\label{lbbh}
    \inf_{q^h\in Q^h}\sup_{v^h\in X^h}\frac{(q^h,\nabla\cdot v^h)}{\|\nabla v^h\|\|q^h\|}\geq\beta^h>0,
\end{equation}
where $\beta^h$ is bounded away from zero uniformly in h.

The (LBB$^h$) condition is equivalent to:
\begin{equation*}
    \beta^h\|q^h\|\leq\sup_{v^h\in X^h}\frac{(q^h,\nabla\cdot v^h)}{\|\nabla v^h\|}.
\end{equation*}
\end{condition}
The space $H^{-1}(\Omega)$ denotes the dual space of bounded linear functional defined on $H_0^1(\Omega)$. This space is equipped with the norm:
\begin{equation*}
    \|f\|_{-1}=\sup_{0\neq v\in X}\frac{(f,v)}{\|\nabla v\|}.
\end{equation*}
Let $I^h$ denote the interpolant in the space of $C^0$ piecewise linears, suppose the following interpolation estimate in $H^{-1}(\Omega)$ holds
(see p.160 of \cite{cfdbook})
\begin{equation}\label{negative-norm}
    \|u-I^h(u)\|_{H^{-1}(\Omega)}\leq Ch\| u-I^h(u)\|.
\end{equation}

Denote by $b^*(u,v,w)$, the skew-symmetric trilinear form, is
\begin{equation*}
    b^*(u,v,w):=\frac{1}{2}(u\cdot\nabla v,w)-\frac{1}{2}(u\cdot \nabla w,v)\quad \forall u,v,w\in [H^1(\Omega)]^d.
\end{equation*}
A weak formulation of the penalty NSE is: find $\bu: (0,T]\to X$ such that 
\begin{align*}
    (\bu_t,v)+b^*(\bu,\bu,v)+\nu(\nabla\bu,\nabla v)+\frac{1}{\epsilon}(\nabla\cdot\bu,\nabla\cdot v)&=(f,v),\quad \forall v\in X,\\
    \bu(\bx,0)&=\bu^0(\bx).
\end{align*}
\begin{lemma}(skew-symmetry see p.123 p.155 \cite{cfdbook}, upper bound for the product of three functions see p.11 \cite{cfdbook})\label{skewsymsharper}
There exists $C_1$ and $C_2$ such that for all $u, v,  w\in X$, $b^*(u, v,  w)$ satisfies
\begin{equation*}
    \begin{aligned}
         & b^*(u, v,  w)=(u\cdot \nabla v,  w)+\frac{1}{2}((\nabla\cdot u) v,  w), \\
         & b^*(u, v,  w)\leq C_1 \|\nabla u\| \|\nabla v\| \|\nabla  w\|, \\
         & b^*(u, v,  w)\leq C_2 \sqrt{\|u\|\|\nabla u\|} \|\nabla v\| \|\nabla  w\|. 
    \end{aligned}
\end{equation*}
Moreover, if $ v\in H^2(\Omega)$, then there exists $C_3$ such that
$$
b^*(u, v,  w)\leq C_3 \big({\|u\|} \| v\|_2 \|\nabla  w\|+\|\nabla\cdot u\|\|\nabla v\|\|\nabla  w\|\big).
$$
Further, if $ v\in H^2(\Omega)\cap L^\infty(\Omega)$, then
$$
b^*(u, v,  w)\leq \big(C_3 \| v\|_2 +\| v\|_\infty\big)\|u\|\|\nabla  w\|.
$$
\end{lemma}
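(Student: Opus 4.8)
The plan is to establish the five assertions in sequence, deriving each from the skew-symmetric definition together with Hölder's inequality and the standard Sobolev/Ladyzhenskaya embeddings; the first (algebraic identity) then serves as the workhorse for several of the later bounds.

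First I would prove the identity. Starting from $b^*(u,v,w)=\frac12(u\cdot\nabla v,w)-\frac12(u\cdot\nabla w,v)$, I would integrate the second term by parts. Since $u,w\in X=(H_0^1(\Omega))^d$ vanish on $\partial\Omega$, the boundary contributions drop and
\[
-\tfrac12(u\cdot\nabla w,v)=\tfrac12((\nabla\cdot u)\,v,w)+\tfrac12(u\cdot\nabla v,w),
\]
so that adding $\frac12(u\cdot\nabla v,w)$ yields $b^*(u,v,w)=(u\cdot\nabla v,w)+\frac12((\nabla\cdot u)v,w)$, the claimed first line.

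For the two homogeneous bounds I would work from the skew form. By Hölder, $|(u\cdot\nabla v,w)|\le\|u\|_{L^4}\|\nabla v\|\,\|w\|_{L^4}$, and likewise for $(u\cdot\nabla w,v)$. Estimating $\|u\|_{L^4}\le C\|\nabla u\|$ and $\|w\|_{L^4}\le C\|\nabla w\|$ through the embedding $H_0^1\hookrightarrow L^4$ gives the inequality with constant $C_1$; applying instead the Ladyzhenskaya inequality $\|u\|_{L^4}\le C\|u\|^{1/2}\|\nabla u\|^{1/2}$ to the $u$-factor produces the $C_2$ bound. For the two $H^2$ estimates I would read off the identity of the first step. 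Its divergence term is handled once and for all by $\frac12|((\nabla\cdot u)v,w)|\le\frac12\|\nabla\cdot u\|\,\|v\|_{L^4}\|w\|_{L^4}\le C\|\nabla\cdot u\|\,\|\nabla v\|\,\|\nabla w\|$, giving the second summand of the penultimate inequality; for $(u\cdot\nabla v,w)$ I would instead split the exponents as $\tfrac12+\tfrac13+\tfrac16=1$, obtaining $|(u\cdot\nabla v,w)|\le\|u\|\,\|\nabla v\|_{L^3}\|w\|_{L^6}\le C\|u\|\,\|v\|_2\,\|\nabla w\|$ via $H^1\hookrightarrow L^3$ applied to $\nabla v$ and $H_0^1\hookrightarrow L^6$ applied to $w$. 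For the final $L^\infty$ estimate I would not use the identity (it reintroduces $\|\nabla\cdot u\|$, which is absent from the target); instead I return to the skew form, bound $\frac12(u\cdot\nabla v,w)$ exactly as above to obtain the $C_3\|v\|_2$ contribution, and bound $\frac12|(u\cdot\nabla w,v)|\le\frac12\|v\|_{L^\infty}\|u\|\,\|\nabla w\|$ by pulling $v$ out in $L^\infty$.

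All of the computations are routine trilinear estimates, so I do not expect a genuine obstacle; the only point requiring care is the bookkeeping of Hölder exponents, namely choosing them so that the available embeddings ($H_0^1\hookrightarrow L^4,L^6$ and $H^1\hookrightarrow L^3$, valid for $d\le 3$ on the bounded domain $\Omega$) apply and so that in each of the five cases the gradient lands on the factor whose norm we are permitted to raise.
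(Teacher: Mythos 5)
The paper does not prove this lemma at all; it is quoted with a citation to the textbook \cite{cfdbook}, so there is no in-paper argument to compare against. Your proof is the standard one and four of the five assertions are handled correctly: the integration by parts for the identity (all boundary terms vanish since $u,v,w\in (H_0^1)^d$), the $L^4$--$L^2$--$L^4$ H\"older split with $H_0^1\hookrightarrow L^4$ for the $C_1$ bound, the $L^2\times L^3\times L^6$ split with $\|\nabla v\|_{L^3}\le C\|v\|_2$ and $\|w\|_{L^6}\le C\|\nabla w\|$ for the $H^2$ estimate, and the decision to revert to the skew form for the $L^\infty$ estimate (correctly avoiding the reappearance of $\|\nabla\cdot u\|$) are all sound for $d\le 3$.

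The one genuine gap is the $C_2$ bound. You derive it from ``the Ladyzhenskaya inequality $\|u\|_{L^4}\le C\|u\|^{1/2}\|\nabla u\|^{1/2}$,'' but that form holds only in two dimensions; in 3D the Gagliardo--Nirenberg exponents for $L^4$ are $\|u\|_{L^4}\le C\|u\|^{1/4}\|\nabla u\|^{3/4}$, and since there is no reverse Poincar\'e inequality the factor $\|u\|^{1/4}\|\nabla u\|^{3/4}$ is not controlled by $\|u\|^{1/2}\|\nabla u\|^{1/2}$. The paper needs this bound with the exponent exactly $1/2$ in both dimensions (it is what produces the $\|\phi^h\|^{1/2}\|\nabla\phi^h\|^{3/2}$ term, and hence the $\|\nabla\bu_\epsilon\|^4$ Gronwall factor, in Theorem \ref{error analysis}). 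The dimension-uniform route is to H\"older with exponents $\tfrac13+\tfrac12+\tfrac16=1$, i.e. $|(u\cdot\nabla v,w)|\le\|u\|_{L^3}\|\nabla v\|\,\|w\|_{L^6}$, and then interpolate $\|u\|_{L^3}\le\|u\|^{1/2}\|u\|_{L^6}^{1/2}\le C\|u\|^{1/2}\|\nabla u\|^{1/2}$ using $H_0^1\hookrightarrow L^6$ (this is the $q=3$, $p=2$, $d=3$ case of the paper's inequality \eqref{sobolev2}); the same split applied to the second term of the skew form finishes the $C_2$ estimate in both 2D and 3D.
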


\begin{lemma}(The Poincar$\Acute{e}$-Friedrichs' inequality see p.9 \cite{cfdbook})
There is a positive constant $C_{PF}=C_{PF}(\Omega)$ such that
\begin{equation}\label{pf ineq}
    \|\bu\|\leq C_{PF}\|\nabla\bu\|\quad\forall\bu\in X.
\end{equation}
\end{lemma}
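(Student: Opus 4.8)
The plan is to prove this classical inequality by a one–dimensional integration argument combined with a density reduction. First I would reduce to the case of a smooth vector field with compact support. Since $C_0^\infty(\Omega)^d$ is dense in $X=(H_0^1(\Omega))^d$ with respect to the norm $\|\nabla\cdot\|$, it suffices to establish the bound $\|\bu\|\leq C_{PF}\|\nabla\bu\|$ for every $\bu\in C_0^\infty(\Omega)^d$; the general case then follows by approximating an arbitrary $\bu\in X$ by such smooth fields and passing to the limit on both sides of the inequality, which is legitimate because both norms are continuous in the $X$-topology. Working componentwise, it is enough to prove the scalar statement $\|\varphi\|\leq C_{PF}\|\nabla\varphi\|$ for $\varphi\in C_0^\infty(\Omega)$, since summing over components recovers the vector inequality with the same constant.

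For the core estimate I would use that $\Omega$ is bounded, so it is contained in a slab $\{\bx:\, a<x_1<b\}$ of width $L:=b-a$. Extending $\varphi$ by zero outside $\Omega$ (smooth since $\varphi$ has compact support), the fundamental theorem of calculus gives the representation
\begin{equation*}
    \varphi(x_1,\dots,x_d)=\int_a^{x_1}\frac{\partial\varphi}{\partial x_1}(s,x_2,\dots,x_d)\,ds.
\end{equation*}
Applying the Cauchy–Schwarz inequality in the $s$-variable yields the pointwise bound
\begin{equation*}
    |\varphi(\bx)|^2\leq (x_1-a)\int_a^{b}\Big|\frac{\partial\varphi}{\partial x_1}(s,x_2,\dots,x_d)\Big|^2\,ds
    \leq L\int_a^{b}\Big|\frac{\partial\varphi}{\partial x_1}\Big|^2\,ds.
\end{equation*}
Integrating this inequality over $\Omega$ (equivalently over the slab, since $\varphi$ vanishes outside $\Omega$) and noting that the right-hand side no longer depends on $x_1$, the $x_1$-integration contributes another factor of $L$, giving $\|\varphi\|^2\leq L^2\,\|\partial\varphi/\partial x_1\|^2\leq L^2\,\|\nabla\varphi\|^2$. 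Taking square roots produces the claim with $C_{PF}=L$, the slab width.

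I do not expect a genuinely hard step here, as this is a textbook estimate; the two places that require a little care are (i) the density reduction, where one must verify that approximation in the $\|\nabla\cdot\|$ norm is enough to transfer the inequality to the closure, and (ii) the use of boundedness of $\Omega$, which is precisely what makes $L$ finite and hence $C_{PF}$ well-defined. If $\Omega$ were merely bounded in one direction the argument would still go through with $L$ the width in that direction, so the essential hypothesis is geometric rather than analytic. The resulting constant $C_{PF}=C_{PF}(\Omega)$ depends only on the domain, as asserted.
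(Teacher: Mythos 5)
Your argument is correct and is the standard textbook proof of this inequality; the paper itself offers no proof, simply citing p.~9 of \cite{cfdbook}, so there is nothing to contrast it with. The slab/fundamental-theorem-of-calculus argument with Cauchy--Schwarz is exactly what the cited reference does, the componentwise reduction is valid, and the density step is sound (approximating in the full $H^1$ norm is enough: both $\|\varphi_n\|$ and $\|\nabla\varphi_n\|$ then converge to the corresponding norms of $\bu$, so the inequality passes to the limit). The only caveat worth noting is that the paper's definition of $H^1_0(\Omega)$ is via the boundary trace rather than as the closure of $C_0^\infty(\Omega)$; for the domains used here these coincide, so your reduction is legitimate.
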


\begin{lemma}(A Sobolev inequality see \cite{cfdbook})
Let $\Omega$ be a bounded open set and suppose $\nabla\bu\in L^p(\Omega)$ with $\bu=0$ on a subset of $\partial\Omega$ with positive measure. Then there is a $C=C(\Omega,p)$ such that for $1\leq p< \infty$,
\begin{align*}
    \|\bu\|_{L_{p^\star}}\leq C\|\nabla\bu\|_{L^p}, \\
    \text{where}\ \frac{1}{p^\star}=\frac{1}{p}-\frac{1}{\text{dim}(\Omega)}\;\;\text{if}\;\; p<\text{dim}(\Omega).
\end{align*}
For example, with $p=2$, for $1\leq p^\star < \infty$ in 2d and $1\leq p^\star \leq 6$ in 3d,
\begin{equation}\label{sobolev}
    \|\bu\|_{L_{p^\star}}\leq C\|\nabla\bu\|.
\end{equation}
\end{lemma}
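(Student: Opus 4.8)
The plan is to reduce the estimate to the classical Gagliardo–Nirenberg–Sobolev (GNS) inequality on all of $\mathbb{R}^n$, where $n=\dim(\Omega)$, and then transfer it to the bounded domain $\Omega$, using the vanishing boundary hypothesis to dispense with a lower-order term. Writing $p^\star = np/(n-p)$, so that $1/p^\star = 1/p - 1/n$, it suffices to establish the general inequality $\|\bu\|_{L^{p^\star}}\le C\|\nabla\bu\|_{L^p}$; the quoted case $p=2$ then follows immediately, giving $p^\star=6$ when $n=3$ (where $p<n$), while $n=2$ is the borderline $p=n$ for which $p^\star$ may be any finite exponent. By a standard density argument it is enough to prove the bound for $\bu\in C^1(\overline{\Omega})$ vanishing on the prescribed boundary piece and then pass to the limit.

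First I would prove the core estimate on $\mathbb{R}^n$ for $\bu\in C_c^1(\mathbb{R}^n)$. For $p=1$ the key is the representation $|\bu(x)|\le \int_{\mathbb{R}}|\partial_i \bu|\,dt_i$ in each coordinate direction $i$, obtained from the fundamental theorem of calculus and compact support. Multiplying these $n$ pointwise bounds, raising to the power $1/(n-1)$, and integrating successively in $x_1,\dots,x_n$ with the generalized H\"older inequality (the Loomis–Whitney iteration) yields $\|\bu\|_{L^{n/(n-1)}(\mathbb{R}^n)}\le \prod_{i=1}^n \|\partial_i\bu\|_{L^1}^{1/n}\le \|\nabla \bu\|_{L^1}$. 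For $1<p<n$ I would apply this to $v=|\bu|^{s}$ with $s=(n-1)p/(n-p)$, so that $|\nabla v|\le s\,|\bu|^{s-1}|\nabla\bu|$, and bound the right-hand side by H\"older with exponents $p$ and $p/(p-1)$. The exponent $s$ is chosen precisely so that the powers of $\|\bu\|_{L^{p^\star}}$ appearing on both sides match and cancel, leaving $\|\bu\|_{L^{p^\star}}\le C\|\nabla\bu\|_{L^p}$.

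Next I would transfer the estimate to $\Omega$. Using a Sobolev extension operator $E\colon W^{1,p}(\Omega)\to W^{1,p}(\mathbb{R}^n)$ with $E\bu$ compactly supported and $\|E\bu\|_{W^{1,p}(\mathbb{R}^n)}\le C\|\bu\|_{W^{1,p}(\Omega)}$, the $\mathbb{R}^n$ estimate applied to $E\bu$ gives the preliminary bound $\|\bu\|_{L^{p^\star}(\Omega)}\le C\big(\|\nabla\bu\|_{L^p(\Omega)}+\|\bu\|_{L^p(\Omega)}\big)$. The crux is then to absorb the zeroth-order term, and this is exactly where the hypothesis that $\bu=0$ on a boundary set of positive surface measure enters. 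I would establish the Poincar\'e–Friedrichs-type inequality $\|\bu\|_{L^p(\Omega)}\le C\|\nabla\bu\|_{L^p(\Omega)}$ by a compactness/contradiction argument: if it failed there would be a sequence $\bu_k$ with $\|\bu_k\|_{L^p}=1$ and $\|\nabla\bu_k\|_{L^p}\to 0$; by the Rellich–Kondrachov theorem a subsequence converges in $L^p(\Omega)$ to some $\bu_\infty$ with $\nabla\bu_\infty=0$, hence constant, while continuity of the trace forces that constant to vanish on the positive-measure boundary piece, so $\bu_\infty=0$, contradicting $\|\bu_\infty\|_{L^p}=1$. For the paper's use $\bu\in(H_0^1(\Omega))^d$ one may instead simply extend by zero and invoke the Poincar\'e–Friedrichs inequality \eqref{pf ineq}. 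Combining this with the preliminary bound yields the claim.

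The step I expect to be most delicate is this removal of the lower-order term, since it is the only place the geometric hypothesis is used and it rests on the compact Sobolev embedding together with a trace argument rather than on elementary manipulations. A secondary subtlety is the borderline case $p=n$, relevant to the two-dimensional example with arbitrary finite $p^\star$: there $p^\star$ from the formula degenerates, and the embedding $W^{1,n}(\Omega)\hookrightarrow L^q(\Omega)$ for every finite $q$ must be obtained separately, e.g.\ by applying the subcritical estimate to $|\bu|^{s}$ for large $s$ and optimizing, or by interpolation.
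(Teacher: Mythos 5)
The paper offers no proof of this lemma: it is quoted as a known result with a citation to \cite{cfdbook}, so there is no in-paper argument to compare yours against. Your proposal is the standard textbook proof (Loomis--Whitney iteration for the $p=1$ Gagliardo--Nirenberg--Sobolev inequality on $\mathbb{R}^n$, the substitution $v=|\bu|^{s}$ with $s=p(n-1)/(n-p)$ for $1<p<n$, a Sobolev extension to pass to $\Omega$, and a compactness argument to absorb the zeroth-order term using the vanishing-trace hypothesis), and the exponent bookkeeping you describe checks out; the treatment of the borderline case $p=n=2$ by applying the subcritical estimate to a power of $\bu$ is also the usual device. One caveat worth flagging: the lemma as stated assumes only that $\Omega$ is a bounded open set, but your extension operator, the Rellich--Kondrachov compactness, and the trace argument all require some boundary regularity (e.g.\ a Lipschitz boundary). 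This is a gap in the hypothesis of the quoted statement rather than in your argument, and for the paper's actual use case $\bu\in(H_0^1(\Omega))^d$ it is harmless, since, as you note, one can extend by zero and invoke the Poincar\'e--Friedrichs inequality \eqref{pf ineq} directly.
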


\begin{lemma}(Useful inequalities see p.7 \cite{cfdbook}, polarization identity)
The $L^2$ inner product satisfies the H\"{o}lder's and Young's inequalities: for any $u,v \in L^2(\Omega)$, for any $\delta$, $0 < \delta < \infty$ and $\frac{1}{p}+\frac{1}{q}=1, 1\leq p,q \leq \infty$,
\begin{equation}\label{holder+young}
\begin{aligned}
    (u,v)\leq\|u\|_{L^p}\|v\|_{L^q},\ \text{and}\ 
    (u,v)\leq \frac{\delta}{p}\|u\|_{L^p}^p+\frac{\delta^{-q/p}}{q}\|v\|_{L^q}^q.
\end{aligned}
\end{equation}
Further, for any $u,v,w\in X$, for any $p,q,r, 1\leq p,q,r \leq \infty$, with $\frac{1}{p}+\frac{1}{q}+\frac{1}{r}=1,$
\begin{equation}\label{trilinear ineq}
    \int_\Omega |u||v||w| dx\leq\|u\|_{L^p}\|v\|_{L^q}\|w\|_{L^r}.
\end{equation}
Polarization identity: for any $u,v \in X$
\begin{align}
    (u,v)&=\frac{1}{2}\|u\|^2+\frac{1}{2}\|v\|^2-\frac{1}{2}\|u-v\|^2,\quad\forall\ u,v\in L^2(\Omega). \label{polar}
\end{align}
\end{lemma}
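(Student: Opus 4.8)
The plan is to derive all four displayed facts from a single elementary building block — the scalar Young inequality $ab \le \frac{a^p}{p} + \frac{b^q}{q}$ for $a,b \ge 0$ and conjugate exponents $\frac{1}{p}+\frac{1}{q}=1$ — and then propagate it by normalization, integration, and pure algebra. First I would prove this scalar inequality from the convexity of $t\mapsto e^t$ (equivalently, concavity of $\log$): for $a,b>0$, write $ab = \exp\!\big(\frac{1}{p}\log a^p + \frac{1}{q}\log b^q\big)$ and apply Jensen's inequality to the convex combination with weights $\frac{1}{p},\frac{1}{q}$; the degenerate cases $a=0$ or $b=0$ are immediate, and the endpoint $p=1,\,q=\infty$ is handled directly.

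To obtain the H\"older half of \eqref{holder+young}, namely $(u,v)\le \|u\|_{L^p}\|v\|_{L^q}$, I would reduce to unit norms. Assuming $\|u\|_{L^p},\|v\|_{L^q}\neq 0$ (the zero case being trivial), set $\hat u = u/\|u\|_{L^p}$ and $\hat v = v/\|v\|_{L^q}$, apply the scalar Young inequality pointwise to $|\hat u(x)|,|\hat v(x)|$, and integrate over $\Omega$; the two integrals on the right equal $\frac{1}{p}$ and $\frac{1}{q}$, summing to $1$, which gives $\int_\Omega |\hat u||\hat v|\,dx\le 1$ and hence the bound after undoing the scaling. The $\delta$-weighted Young inequality for the inner product then follows by applying the scalar inequality once more, not pointwise but to the two numbers $a=\|u\|_{L^p}$, $b=\|v\|_{L^q}$: writing $ab = (\delta^{1/p}a)(\delta^{-1/p}b)$ and applying $ab\le\frac{a^p}{p}+\frac{b^q}{q}$ to the rescaled factors produces exactly $\frac{\delta}{p}\|u\|_{L^p}^p + \frac{\delta^{-q/p}}{q}\|v\|_{L^q}^q$.

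For the trilinear estimate \eqref{trilinear ineq} I would iterate the two-factor H\"older inequality. First split $\int_\Omega |u|\,|vw|\,dx \le \|u\|_{L^p}\,\| vw\|_{L^{p'}}$ with $p'=p/(p-1)$, so that $\frac{1}{p'}=1-\frac{1}{p}=\frac{1}{q}+\frac{1}{r}$. Then apply H\"older again to $|v|^{p'},|w|^{p'}$ with the pair of exponents $q/p'$ and $r/p'$, which are conjugate precisely because their reciprocals sum to $p'\big(\frac{1}{q}+\frac{1}{r}\big)=p'\cdot\frac{1}{p'}=1$; raising the result to the power $1/p'$ yields $\|vw\|_{L^{p'}}\le \|v\|_{L^q}\|w\|_{L^r}$, and combining the two steps gives the claim. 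Finally, the polarization identity \eqref{polar} is pure algebra: expand $\|u-v\|^2=\|u\|^2-2(u,v)+\|v\|^2$ and solve for $(u,v)$.

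As for the main obstacle, there is essentially none of mathematical depth — every statement is a standard textbook fact, consistent with the citation to p.7 of \cite{cfdbook}. The only points demanding care are bookkeeping: verifying in the iterated H\"older step that $q/p'$ and $r/p'$ are genuinely conjugate, and correctly tracking the powers of $\delta$ in the weighted Young inequality so that the exponent emerges as $-q/p$ rather than $-p/q$.
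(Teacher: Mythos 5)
Your proposal is correct in all its steps: the scalar Young inequality via convexity of the exponential, H\"older by normalization and pointwise application, the $\delta$-weighted form by rescaling $a\mapsto\delta^{1/p}a$, $b\mapsto\delta^{-1/p}b$ (which does produce the exponent $-q/p$), the three-factor inequality by iterating H\"older with the conjugate pair $q/p'$, $r/p'$, and polarization by expanding $\|u-v\|^2$. The paper itself gives no proof of this lemma --- it is stated as a collection of standard facts with a citation to p.~7 of the referenced textbook --- so there is no argument to compare against; your derivation is simply the standard textbook one, written out in full, and it is sound. The only point worth flagging is that the endpoint cases $p=1$, $q=\infty$ (and their analogues in the trilinear estimate) must be handled directly rather than through the Young-inequality route, which you already acknowledge.
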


\begin{proposition}(see p.173 of 
\cite{tool2012mathematical}) Let $W^{m,p}(\Omega)$ denote the Sobolev space, let $p\in[1,+\infty]$ and $q\in[p,p^*]$. There is a $C>0$ such that
\begin{equation}\label{sobolev2}
    \|u\|_{L^q}\leq C\|u\|_{L^p}^{1+d/q-d/p}\|u\|_{W^{1,p}}^{d/p-d/q},\quad\forall u\in W^{1,p}(\Omega)
\end{equation}
\end{proposition}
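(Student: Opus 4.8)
The plan is to recognize \eqref{sobolev2} as the Gagliardo--Nirenberg interpolation inequality and to obtain it by combining two classical ingredients already available (or standard): the interpolation (log-convexity) of the $L^q$ norms and the Sobolev embedding $W^{1,p}\hookrightarrow L^{p^*}$, where $p^*$ is the Sobolev conjugate defined by $1/p^*=1/p-1/d$ with $d=\dim(\Omega)$ (the same $p^\star$ appearing in \eqref{sobolev}). First I would observe that the two exponents on the right-hand side sum to one, $(1+d/q-d/p)+(d/p-d/q)=1$, so the claimed bound is a genuine convex-combination estimate. Setting $\theta:=d/p-d/q=d(1/p-1/q)$, the hypothesis $q\in[p,p^*]$ translates exactly into $\theta\in[0,1]$: the endpoint $q=p$ gives $\theta=0$, and $q=p^*$ gives $\theta=1$ by the definition of $p^*$. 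The right-hand exponents are then $1-\theta$ on $\|u\|_{L^p}$ and $\theta$ on $\|u\|_{W^{1,p}}$.

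Second, I would verify the interpolation identity for the exponent $q$. Solving $\tfrac1q=\tfrac{1-\theta}{p}+\tfrac{\theta}{p^*}$ gives $\tfrac1q=\tfrac1p-\theta\cdot\tfrac1d$, i.e. $\theta=d/p-d/q$, matching the definition above. Hence, applying H\"older's inequality \eqref{holder+young} to the factorization $|u|^q=|u|^{q(1-\theta)}\,|u|^{q\theta}$ with conjugate exponents $p/(q(1-\theta))$ and $p^*/(q\theta)$ (which are $\ge 1$ precisely because $\theta,1-\theta\ge0$ sum to one), one obtains
\begin{equation*}
    \|u\|_{L^q}\le \|u\|_{L^p}^{1-\theta}\,\|u\|_{L^{p^*}}^{\theta}.
\end{equation*}
This step is elementary and uses only tools recorded in the excerpt.

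Third, I would bound the high-integrability factor by the Sobolev embedding $\|u\|_{L^{p^*}}\le C\|u\|_{W^{1,p}}$, valid for $u\in W^{1,p}(\Omega)$ on a bounded domain (the $p=2$ instance is \eqref{sobolev}). Inserting this into the interpolation bound and using $\theta\le1$ to absorb the constant, $C^{\theta}\le\max\{1,C\}$, yields
\begin{equation*}
    \|u\|_{L^q}\le C\,\|u\|_{L^p}^{1-\theta}\,\|u\|_{W^{1,p}}^{\theta}
    = C\,\|u\|_{L^p}^{1+d/q-d/p}\,\|u\|_{W^{1,p}}^{d/p-d/q},
\end{equation*}
which is \eqref{sobolev2}.

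The main obstacle is contained entirely in the Sobolev embedding $W^{1,p}\hookrightarrow L^{p^*}$; the interpolation is routine. The cleanest route to the embedding is the Gagliardo--Nirenberg--Sobolev argument: first prove the case $p=1$ by writing $u(x)$ as an integral of $\partial_{x_i}u$ along each coordinate line, bounding $|u|^{d/(d-1)}$ by a product of $d$ such one-dimensional integrals, and iterating the generalized H\"older inequality \eqref{trilinear ineq}; then pass to general $1\le p<d$ by applying the $p=1$ result to $|u|^{\gamma}$ for a suitable power $\gamma$ and one further use of H\"older. On a bounded domain one should either assume the trace vanishes on a subset of $\partial\Omega$ of positive measure (as in \eqref{sobolev}) or invoke an extension operator, so that the full $W^{1,p}$ norm rather than only $\|\nabla u\|_{L^p}$ controls $\|u\|_{L^{p^*}}$. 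The borderline cases $p=d$ (where $p^*=\infty$) and $p>d$ require separate standard treatment, but the stated range $q\in[p,p^*]$ with the displayed exponents is precisely the subcritical regime $p<d$, where the argument above applies verbatim.
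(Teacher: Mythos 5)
The paper does not prove this proposition at all; it is quoted verbatim from the cited reference (p.~173 of \cite{tool2012mathematical}), so there is no in-paper argument to compare against. Your derivation is the standard one and is correct: the exponent bookkeeping ($\theta=d/p-d/q$, $1-\theta=1+d/q-d/p$, $1/q=(1-\theta)/p+\theta/p^*$) checks out, the H\"older interpolation $\|u\|_{L^q}\le\|u\|_{L^p}^{1-\theta}\|u\|_{L^{p^*}}^{\theta}$ is valid on the stated range since $q\in[p,p^*]$ forces $\theta\in[0,1]$ and the conjugate exponents $p/(q(1-\theta))$ and $p^*/(q\theta)$ are indeed at least one, and the Sobolev embedding $W^{1,p}\hookrightarrow L^{p^*}$ (with an extension operator or a vanishing-trace hypothesis on the bounded domain) supplies the remaining factor. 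You are also right to flag that the displayed exponents only make literal sense in the subcritical regime $p<d$, a point the paper's statement glosses over with its nominal range $p\in[1,+\infty]$.
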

\begin{lemma}(A $L^p-L^2$ type inverse inequality see Lemma 2.1 of Layton \cite{layton1996nonlinear} also similar result of p.112 Theorem of Brenner and Scott \cite{brennerscott2008mathematical})
Let $\theta_0$ be the minimum angle in the triangulation and $M^k=\{v(x):v(x)|_e\in \mathcal{P}_k(e)\ \forall\ e\in\mathcal{T}^h(\Omega)\}$, $\mathcal{P}_k$ being the polynomials of degree $\leq k$. Then, for $\nabla^h$ the elementwise defined gradient operator, there is a $C=C(\theta_0,p,k)$ such that for $2\leq p<\infty, d=2,3$ and all $v\in M^k$,
\begin{equation}\label{lp-l2inverse}
    \|\nabla^h v\|_{L^p(\Omega)}\leq Ch^{\frac{d}{2}(\frac{2-p}{p})}\|\nabla^h v\|.
\end{equation}
\end{lemma}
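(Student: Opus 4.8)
The plan is to prove the inequality locally on each element and then assemble the global estimate, exploiting that $\nabla^h v$ is, element by element, a polynomial of degree $\le k-1$, hence lies in the finite-dimensional space $\mathcal{P}_k(e)$. The whole argument rests on the equivalence of all norms on a finite-dimensional space, transplanted to the physical elements by an affine change of variables; the minimum-angle hypothesis enters only to keep the transplanted constants uniform.

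First I would fix an element $e\in\mathcal{T}^h(\Omega)$ and set $w:=\partial_i v|_e$ for a single component, a polynomial of degree $\le k-1$. Let $F_e(\hat x)=B_e\hat x+b_e$ be the affine map from a fixed reference element $\hat e$ onto $e$, and $\hat w:=w\circ F_e$. A change of variables gives $\|w\|_{L^p(e)}=|\det B_e|^{1/p}\|\hat w\|_{L^p(\hat e)}$ and the same with $p$ replaced by $2$. On the reference element all norms on $\mathcal{P}_k(\hat e)$ are equivalent, so there is $\hat C=\hat C(p,k)$ with $\|\hat w\|_{L^p(\hat e)}\le\hat C\|\hat w\|_{L^2(\hat e)}$. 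Combining these three facts yields the scaled local inverse inequality
\begin{equation*}
\|w\|_{L^p(e)}\le\hat C\,|\det B_e|^{\frac1p-\frac12}\,\|w\|_{L^2(e)}.
\end{equation*}
Because $2\le p$ the exponent $\frac1p-\frac12$ is nonpositive, and the minimum-angle condition provides the shape-regularity bound $|\det B_e|\ge c(\theta_0)\,h_e^{\,d}$, so that $|\det B_e|^{\frac1p-\frac12}\le C(\theta_0,p)\,h_e^{\frac{d}{2}\frac{2-p}{p}}$. Combining this over the $d$ components (a finite-dimensional vector-norm equivalence) recovers $\|\nabla^h v\|_{L^p(e)}\le C(\theta_0,p,k)\,h_e^{\frac{d}{2}\frac{2-p}{p}}\|\nabla^h v\|_{L^2(e)}$.

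Next I would assemble the global estimate. Raising the local bound to the $p$-th power and summing over $e$ gives
\begin{equation*}
\|\nabla^h v\|_{L^p(\Omega)}^p=\sum_e\|\nabla^h v\|_{L^p(e)}^p\le C^p\sum_e h_e^{\frac{d}{2}(2-p)}\|\nabla^h v\|_{L^2(e)}^p.
\end{equation*}
Under quasi-uniformity of the triangulation (the standard companion to the minimum-angle condition, giving $h_e\ge\rho\,h$) the negative power $h_e^{\frac{d}{2}(2-p)}$ is bounded by $Ch^{\frac{d}{2}(2-p)}$, so the $h$-dependence factors out. What remains is the purely discrete step $\sum_e a_e^p\le(\sum_e a_e^2)^{p/2}$ with $a_e=\|\nabla^h v\|_{L^2(e)}$, valid because the embedding $\ell^p\hookrightarrow\ell^2$ has norm one when $p\ge2$. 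This collapses the sum to $\|\nabla^h v\|_{L^2(\Omega)}^p$, and taking the $p$-th root delivers the claim.

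I expect the only genuinely delicate points to be bookkeeping rather than ideas: first, checking that the Jacobian estimate $|\det B_e|\ge c(\theta_0)h_e^{\,d}$ really follows from the minimum-angle condition, so that the constant is $\theta_0$-dependent but $h$-independent; and second, getting the direction of the discrete embedding right, since for $p\ge2$ it is $\|a\|_{\ell^p}\le\|a\|_{\ell^2}$ that holds, which is exactly what is needed, whereas for $p<2$ the argument would fail. The reference-element norm equivalence and the change-of-variables scalings are routine.
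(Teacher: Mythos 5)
The paper does not prove this lemma; it is quoted from Layton (Lemma 2.1) and Brenner--Scott, and your argument is precisely the standard scaling proof those references use: map each element to a fixed reference element, invoke equivalence of norms on the finite-dimensional space $\mathcal{P}_k(\hat e)$, scale back, and sum. The local estimate and the $\ell^p\hookrightarrow\ell^2$ step are correct, including the sign bookkeeping on the exponent $\tfrac{d}{2}\tfrac{2-p}{p}\le 0$. Your one substantive observation is also right: passing from the elementwise bound in $h_e$ to a global bound in a single $h$ requires $h_e\ge\rho h$, i.e.\ quasi-uniformity, which is strictly more than the stated minimum-angle condition; the lemma as quoted implicitly assumes this (or takes $h$ to be the minimal element diameter), so flagging it is appropriate rather than a defect in your proof.
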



\begin{proposition}(The continuous inf-sup condition see p.58 \cite{cfdbook})
There is a constant $\beta>0$ such that
\begin{equation}\label{lbb}
    \inf_{q\in Q}\sup_{v \in X} \frac{(q,\nabla\cdot v)}{\|\nabla v\|\|q\|}\geq \beta >0.
\end{equation}
\end{proposition}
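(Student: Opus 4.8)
The plan is to recognize that the stated inf-sup bound is, after unwinding the definition of the dual norm $\|\cdot\|_{-1}$, exactly the Lions--Ne\v{c}as inequality on $Q = L_0^2(\Omega)$, and then to prove that inequality. First I would fix $q \in Q$ and observe that for any $v \in X$, integration by parts gives $(q,\divv v) = -\langle \nabla q, v\rangle$, so that
\begin{equation*}
  \sup_{v \in X}\frac{(q,\divv v)}{\|\nabla v\|} = \sup_{v \in X}\frac{-\langle \nabla q, v\rangle}{\|\nabla v\|} = \sup_{v \in X}\frac{\langle \nabla q, v\rangle}{\|\nabla v\|} = \|\nabla q\|_{-1},
\end{equation*}
where the middle equality uses that the supremum is insensitive to $v \mapsto -v$. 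Hence the quantity to be bounded below is $\inf_{q\in Q}\|\nabla q\|_{-1}/\|q\|$, and the proposition is equivalent to the existence of $C>0$ with $\|q\| \le C\|\nabla q\|_{-1}$ for all $q \in L_0^2(\Omega)$. This reduction is pure duality and costs nothing.

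The core analytic fact I would then establish is the full Ne\v{c}as inequality
\begin{equation*}
  \|q\|_{L^2(\Omega)} \le C\big(\|q\|_{-1} + \|\nabla q\|_{-1}\big), \qquad \forall q \in L^2(\Omega),
\end{equation*}
valid on a bounded Lipschitz domain. To pass from this to the sharp estimate $\|q\| \le C\|\nabla q\|_{-1}$ on the mean-zero subspace $Q$, I would run a compactness (Peetre--Tartar) argument: if the sharp inequality failed there would be $q_n \in Q$ with $\|q_n\|=1$ and $\|\nabla q_n\|_{-1}\to 0$. The compact embedding $L^2(\Omega) \hookrightarrow H^{-1}(\Omega)$ (Rellich, by duality) extracts a subsequence Cauchy in $H^{-1}$; applying the full Ne\v{c}as inequality to the differences $q_n - q_m$ then shows $\{q_n\}$ is Cauchy in $L^2$. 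Its limit $q$ satisfies $\|q\|=1$ and $\nabla q = 0$, hence is a mean-zero constant, i.e. $q=0$, a contradiction.

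The remaining and genuinely hard step is the full Ne\v{c}as inequality itself, which is where the geometry of $\Omega$ enters. I would prove it by localization: cover $\overline{\Omega}$ by finitely many charts, use a subordinate partition of unity to write $q = \sum_i \varphi_i q$, flatten the Lipschitz boundary in each boundary chart by a bi-Lipschitz change of variables, and reduce to a half-space (or whole-space) estimate. On the whole space the inequality is transparent via the Fourier transform, since
\begin{equation*}
  \|q\|_{L^2}^2 = \int \frac{1}{1+|\xi|^2}|\widehat q|^2\,d\xi + \int \frac{|\xi|^2}{1+|\xi|^2}|\widehat q|^2\,d\xi \lesssim \|q\|_{-1}^2 + \|\nabla q\|_{-1}^2.
\end{equation*}
The obstacle is controlling the commutator terms produced by the cut-offs and the non-smoothness of the flattening maps, for which negative-norm interpolation and density of smooth functions are the main tools. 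An alternative, more constructive route that bypasses Ne\v{c}as entirely is to invoke the Bogovski\u{\i} solution operator for the divergence: on a domain star-shaped with respect to a ball it produces, for each $q \in L_0^2$, a field $v \in (H_0^1(\Omega))^d$ with $\divv v = q$ and $\|\nabla v\| \le C\|q\|$; patching finitely many star-shaped pieces covers a general Lipschitz $\Omega$, after which $\sup_{v}(q,\divv v)/\|\nabla v\| \ge (q,q)/(C\|q\|) = \|q\|/C$ gives $\beta = 1/C$ directly. Either way the domain-dependent construction is the crux, and the rest is soft functional analysis.
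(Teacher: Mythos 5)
The paper does not prove this proposition at all: it is stated as a known classical result and cited to p.~58 of \cite{cfdbook}, so there is no in-paper argument to compare against. Your proposal is a correct outline of the standard proof, and the two routes you describe (Ne\v{c}as inequality plus a Peetre--Tartar compactness argument, or the Bogovski\u{\i} right inverse of the divergence) are exactly the two classical ways this is established in the literature. The soft parts of your argument are sound: the duality reduction of the inf-sup constant to the bound $\|q\|\leq C\|\nabla q\|_{-1}$ on $L^2_0(\Omega)$ is correct (with $\|\cdot\|_{-1}$ taken with respect to the norm $\|\nabla v\|$ on $X$, consistent with the paper's definition), and the contradiction/compactness step correctly uses Rellich by duality and the full Ne\v{c}as inequality applied to differences; note only that you need $\Omega$ connected so that $\nabla q=0$ forces $q$ constant, hence zero by the mean-zero constraint. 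The genuine content --- the Ne\v{c}as inequality on a Lipschitz domain, or equivalently the existence and $H^1_0$-bound of the Bogovski\u{\i} operator --- is sketched rather than proved: the localization, boundary flattening, and commutator control in the first route, and the star-shaped decomposition of a general Lipschitz domain in the second, are each several pages of careful work. Your proposal correctly identifies where the difficulty lives and what tools resolve it, which is an appropriate level of detail for a result the paper itself treats as a citation; if a self-contained proof were required, the Bogovski\u{\i} route is the more economical one to write out in full, since it produces the constant $\beta$ constructively and avoids negative-norm interpolation entirely.
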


\begin{lemma}(A Discrete Gronwall lemma see Lemma 5.1 p.369 
\cite{heywood1990})\label{gronwall}
Let $\Delta t, B, a_n, b_n, c_n, d_n $ be non-negative numbers such that for $l\geq 1$
\begin{equation*}
a_l+\Delta t\sum_{n=0}^lb_n\leq \Delta t \sum_{n=0}^{l-1}d_na_n+\Delta t\sum_{n=0}^lc_n+B, \quad\text{for} \;l\geq 0,
\end{equation*}
then for all $\Delta t>0$,
\begin{equation*}
a_l+\Delta t\sum_{n=0}^lb_n\leq \exp(\Delta t\sum_{n=0}^{l-1}d_n)\Big(\Delta t\sum_{n=0}^lc_n+B\Big),\quad \text{for}\; l\geq 0.
\end{equation*}
\end{lemma}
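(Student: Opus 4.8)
The plan is to reduce the claim to the classical scalar discrete Gronwall inequality and then recover the extra $b_n$-sum on the left-hand side. Throughout, write $S_l := \Delta t\sum_{n=0}^l c_n + B$, which is nonnegative and nondecreasing in $l$ because $c_n, B \geq 0$, and set $\phi_n := \Delta t\, d_n \geq 0$ together with the partial products $P_m := \prod_{j=0}^{m-1}(1+\phi_j)$ (so $P_0 = 1$). The one elementary fact I would isolate first is the telescoping relation $\sum_{n=0}^{l-1}\phi_n P_n = P_l - 1$, which follows immediately from $P_{n+1} - P_n = \phi_n P_n$.

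First I would discard the nonnegative term $\Delta t\sum_{n=0}^l b_n$ from the left of the hypothesis to obtain the purely scalar inequality $a_l \leq \Delta t\sum_{n=0}^{l-1} d_n a_n + S_l$. I claim $a_l \leq S_l P_l$ for every $l$, which I would prove by strong induction: the case $l = 0$ reads $a_0 \leq S_0 = S_0 P_0$ (the sum and product are empty), and assuming the bound for all indices below $l$, monotonicity of $S$ gives $a_l \leq \sum_{n=0}^{l-1}\phi_n S_n P_n + S_l \leq S_l\big(\sum_{n=0}^{l-1}\phi_n P_n + 1\big) = S_l P_l$, where the last equality is the telescoping identity. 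I would emphasize here that this step never requires a restriction such as $\phi_l < 1$: because the sum on the right stops at $n = l-1$, the unknown $a_l$ never appears on the right, and this is exactly why the conclusion is valid for all $\Delta t > 0$.

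Next I would feed this scalar bound back into the full hypothesis, this time retaining the $b_n$-sum. Using $a_n \leq S_n P_n \leq S_l P_n$ for $n \leq l-1$ (monotonicity of $S$) and the telescoping identity once more,
\[
a_l + \Delta t\sum_{n=0}^l b_n \leq \Delta t\sum_{n=0}^{l-1} d_n a_n + S_l \leq S_l\sum_{n=0}^{l-1}\phi_n P_n + S_l = S_l(P_l - 1) + S_l = S_l P_l.
\]
Finally, the elementary inequality $1 + x \leq e^x$ yields $P_l = \prod_{j=0}^{l-1}(1+\phi_j) \leq \exp\big(\sum_{j=0}^{l-1}\phi_j\big) = \exp\big(\Delta t\sum_{j=0}^{l-1} d_j\big)$, and substituting $S_l = \Delta t\sum_{n=0}^l c_n + B$ reproduces the stated conclusion exactly.

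There is no genuine obstacle here — the result is classical and is in fact cited to Heywood–Rannacher. The only points demanding a little care are the telescoping identity and the observation that, since the $d_n a_n$ sum terminates at $n = l-1$, the argument is entirely free of any time-step smallness condition. This last feature is precisely what makes the lemma the convenient tool for the unconditional-stability estimates of Section 3, where one cannot afford a hidden constraint on $\Delta t$.
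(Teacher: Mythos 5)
Your proof is correct. The paper itself gives no proof of this lemma --- it is quoted verbatim as Lemma 5.1 of Heywood and Rannacher \cite{heywood1990} --- so there is nothing to compare against; your argument (reduce to the scalar inequality, induct to get $a_l\leq S_lP_l$ with $P_l=\prod_{j=0}^{l-1}(1+\Delta t\,d_j)$ via the telescoping identity, reinsert the $b_n$-sum, and finish with $1+x\leq e^x$) is the standard proof of the explicit form of the discrete Gronwall lemma, and your remark that the sum terminating at $n=l-1$ is what removes any smallness condition on $\Delta t$ is exactly the right point to emphasize.
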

\section{Stability of Backward Euler}\label{stable-analysis}
This section establishes conditions for stability for the variable $\epsilon$ first-order method with constant time-step:
\begin{equation}\label{discrete-stokes-nonlinear2}
\frac{\bu^{n+1}-\bu^n}{k}+\bu^*\cdot \nabla \bu^{n+1}+\frac{1}{2}(\nabla\cdot\bu^*)\bu^{n+1} - \nu\Dt \bu^{n+1} - \nabla (\frac{1}{\epsilon_{n+1}}\nabla\cdot \bu^{n+1}) = f^{n+1}.
\end{equation}
We prove that the velocity is unconditionally stable, but $\|\bu_t\|$ is stable with restrictions on the change of $\epsilon$.

\subsection{Stability of the velocity}
\begin{theorem}\label{stability of u}
(Stability of variable $\epsilon$ penalty method). The variable $\epsilon$ first-order method \eqref{discrete-stokes-nonlinear} is stable. For any $M>0$, the energy equality holds:
\begin{gather*}
    \frac{1}{2}\int_\Omega |\bu^M|^2 dx+\sum_{n=0}^{M-1}\int_\Omega\left(\frac{1}{2}|\bu^{n+1}-\bu^n|^2+k\nu|\nabla\bu^{n+1}|^2+\frac{k}{\epsilon_{n+1}}|\nabla\cdot\bu^{n+1}|^2\right) dx\\
    =\frac{1}{2}\int_\Omega |\bu^0|^2 dx+\sum_{n=0}^{M-1}k\int_\Omega \bu^{n+1}\cdot f^{n+1} dx,
\end{gather*}
and the stability bound holds:
\begin{gather*}
    \frac{1}{2}\int_\Omega |\bu^M|^2 dx+\sum_{n=0}^{M-1}\int_\Omega\left(\frac{1}{2}|\bu^{n+1}-\bu^n|^2+\frac{k\nu}{2}|\nabla\bu^{n+1}|^2+\frac{k}{\epsilon_{n+1}}|\nabla\cdot\bu^{n+1}|^2\right) dx\\
    \leq \frac{1}{2}\int_\Omega |\bu^0|^2 dx +\sum_{n=0}^{M-1}\frac{k}{2\nu}\|f^{n+1}\|_{-1}^2.
\end{gather*}
\end{theorem}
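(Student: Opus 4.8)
My plan is to run the standard energy argument: take the $L^2$ inner product of the scheme \eqref{discrete-stokes-nonlinear2} with the solution $\bu^{n+1}$ and scale by the time-step $k$. The work is to identify what each of the five resulting terms contributes, and the payoff is that, because the method is written in explicitly skew-symmetrized form and the penalty term is dissipative, everything collapses to a clean \emph{equality} rather than an inequality, so no Gronwall lemma is needed and no condition on $\epsilon_{n+1}$ arises.

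Term by term, I would proceed as follows. For the discrete time derivative I would invoke the polarization identity \eqref{polar}, which gives $(\bu^{n+1}-\bu^n,\bu^{n+1})=\tfrac12\|\bu^{n+1}\|^2-\tfrac12\|\bu^n\|^2+\tfrac12\|\bu^{n+1}-\bu^n\|^2$. The two nonlinear terms combine exactly into $b^*(\bu^*,\bu^{n+1},\bu^{n+1})$ by the first identity of \Cref{skewsymsharper}, which vanishes by skew-symmetry; this is the one place where the choice to skew-symmetrize the convection term in \eqref{pnse1} pays off, and it is the step I would flag as most important to get right. The viscous term, after integration by parts against the (homogeneous Dirichlet) velocity, yields $\nu\|\nabla\bu^{n+1}\|^2$, and the penalty term $-\big(\nabla(\tfrac{1}{\epsilon_{n+1}}\divv\bu^{n+1}),\bu^{n+1}\big)$ integrates by parts to the nonnegative dissipation $\tfrac{1}{\epsilon_{n+1}}\|\divv\bu^{n+1}\|^2$. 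Summing the resulting identity over $n=0,\dots,M-1$ telescopes the kinetic-energy differences and produces exactly the asserted energy equality.

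To pass from the equality to the stability bound, I would estimate the forcing term using the definition of the $H^{-1}$ norm followed by Young's inequality \eqref{holder+young}, writing
\begin{equation*}
k(f^{n+1},\bu^{n+1})\le k\|f^{n+1}\|_{-1}\|\nabla\bu^{n+1}\|\le \frac{k}{2\nu}\|f^{n+1}\|_{-1}^2+\frac{k\nu}{2}\|\nabla\bu^{n+1}\|^2,
\end{equation*}
and then absorb the $\tfrac{k\nu}{2}\|\nabla\bu^{n+1}\|^2$ piece into the viscous dissipation $k\nu\|\nabla\bu^{n+1}\|^2$ already present on the left, leaving $\tfrac{k\nu}{2}\|\nabla\bu^{n+1}\|^2$. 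This yields the stated bound directly, with no summation by parts in $\epsilon$ required. The point worth emphasizing is that the variable penalty parameter enters only through the single positive coefficient $\tfrac{k}{\epsilon_{n+1}}$ and never through a cross term of the form $\epsilon_{n+1}-\epsilon_n$; this is precisely why $\|\bu\|$ is unconditionally stable, in contrast to the acceleration estimate of Theorem 3.3 where such a difference does appear and forces condition \eqref{epsilon-restriction}. The only genuine technical care needed is in the boundary terms of the two integrations by parts, which are harmless under the homogeneous Dirichlet condition on $X$.
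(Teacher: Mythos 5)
Your proposal is correct and follows essentially the same route as the paper: test with $\bu^{n+1}$, use the polarization identity \eqref{polar} for the time-difference term, let skew-symmetry annihilate the convection terms, integrate by parts for the viscous and penalty terms, sum to get the equality, and then apply the dual-norm/Young estimate to absorb $\tfrac{k\nu}{2}\|\nabla\bu^{n+1}\|^2$ into the dissipation. The only difference is cosmetic: you spell out explicitly that the two nonlinear terms combine into $b^*(\bu^*,\bu^{n+1},\bu^{n+1})=0$, a step the paper's proof performs silently.
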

\begin{proof}
Consider the constant time-step of \eqref{discrete-stokes-nonlinear2} $k_{n+1}=k$ for all n, set $M=T/k$.
Take the $L^2$ inner product of \eqref{discrete-stokes-nonlinear2} with $\bu^{n+1}$. We obtain
\begin{equation*}
 \frac{1}{k}\left(||\bu^{n+1}||^2-(\bu^{n},\bu^{n+1})\right)+\nu||\nabla \bu^{n+1}||^2 +\frac{1}{\epsilon_{n+1}}||\nabla\cdot \bu^{n+1}||^2=(f^{n+1},\bu^{n+1}).
\end{equation*}
Apply the polarization identity \eqref{polar} to the term $(\bu^n,\bu^{n+1})$
\begin{equation*}
 \frac{1}{2k}(||\bu^{n+1}||^2-||\bu^n||^2+||\bu^{n+1}-\bu^n||^2)+\nu||\nabla \bu^{n+1}||^2 +\frac{1}{\epsilon_{n+1}}||\nabla\cdot \bu^{n+1}||^2=(f^{n+1},\bu^{n+1}).
\end{equation*}
By the definition of the dual norm and Young's inequality,
\begin{equation*}
 \frac{1}{2k}(||\bu^{n+1}||^2-||\bu^n||^2+||\bu^{n+1}-\bu^n||^2)+\nu||\nabla \bu^{n+1}||^2 +\frac{1}{\epsilon_{n+1}}||\nabla\cdot \bu^{n+1}||^2\leq \frac{1}{2\nu}||f^{n+1}||^2_{-1}+ \frac{\nu}{2}||\nabla \bu^{n+1}||^2.
\end{equation*}
Sum from $n=0,...,M-1$
\begin{equation*}
\frac{1}{2k}||\bu^{M}||^2+\sum_{n=0}^{M-1}(\frac{1}{2k}||\bu^{n+1}-\bu^n||^2+
\frac{\nu}{2}||\nabla \bu^{n+1}||^2+\frac{1}{\epsilon_{n+1}}||\nabla\cdot \bu^{n+1}||^2)\leq  \frac{1}{2k}||\bu^0||^2+\sum_{n=0}^{M-1} \frac{1}{2\nu}||f^{n+1}||^2_{-1}.
\end{equation*}
Multiply by $2k$ and drop positive terms on the left hand side
\begin{equation*}
||\bu^{M}||^2\leq  ||\bu^0||^2+2k\sum_{n=0}^{M-1}\frac{1}{2\nu}||f^{n+1}||^2_{-1}.
\end{equation*}
\end{proof}
\subsection{Stability of $\|\bu_t\|$ for the linear Stokes problem}
 In order to ensure $\nabla\cdot\bu\to 0$ as $\epsilon\to 0$, we need to bound $\|p_\epsilon\|$ following the idea in Fiordilino \cite{fiordilino2018pressure}. By using the $LBB$ inf-sup condition \eqref{lbb}:
\begin{align*}
    \beta\|p\|&\leq\sup_{v\in X}\frac{(p,\nabla\cdot v)}{\|\nabla v\|} 
    =\sup_{v\in X}\frac{-(f,v)+(\bu_t,v)+(\bu\cdot\nabla\bu,v)+\nu(\nabla\bu,\nabla v)}{\|\nabla v\|} \\
    &\leq \|f\|_{-1}+\|\bu_t\|_{-1}+C\|\nabla\bu\|^2+\nu\|\nabla\bu\|,
\end{align*}
this implies we must begin with a bound of $\|\bu_t\|_{-1}$. 
\begin{remark}
The stability conditions on $\epsilon$ that are derived from the linear Stokes problem are necessary for the case of nonlinear NSE. The stability analysis of the nonlinear term of NSE will be more involved but not alter the fundamental approach of this proof. Hence, this case shall be omitted.
\end{remark}
Consider the first-order method with penalty:
\begin{equation}\label{stokes}
    \frac{\bu^{n+1}-\bu^n}{k}-\nu\Dt\bu^{n+1}-\nabla\left(\frac{1}{\epsilon_{n+1}}\nabla\cdot\bu^{n+1}\right)=f^{n+1}.
\end{equation}
\begin{theorem} (0-stability of linear Stokes)
For any $0\leq n\leq M-1$,  if there is some constant $\alpha$ such that $0\leq \alpha k< 1$ and  $(1-k\alpha)\epsilon_n\leq\epsilon_{n+1}$ holds
, then the following stability bound holds
\begin{align*}
    \sum_{n=0}^{M-1}\left(\frac{k}{2}\|\frac{\bu^{n+1}-\bu^n}{k}\|^2+\frac{\nu}{2}\|\nabla(\bu^{n+1}-\bu^n)\|^2+\frac{1}{2\epsilon_{n+1}}\|\nabla\cdot (\bu^{n+1}-\bu^n)\|^2\right)+\frac{\nu}{2}\|\nabla\bu^M\|^2+\frac{1}{2\epsilon_M}\|\nabla\cdot \bu^M\|^2\\
    \leq\exp(\alpha T)\left\{\frac{\nu}{2}\|\nabla\bu^0\|^2+\frac{1}{2\epsilon_0}\|\nabla\cdot\bu^0\|^2+\sum_{n=0}^{M-1}\frac{k}{2}\|f^{n+1}\|^2\right\}.
\end{align*}
\end{theorem}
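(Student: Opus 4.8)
The plan is to run an energy argument, but to control $\|\bu_t\|$ I would test against the \emph{discrete time derivative} rather than against $\bu^{n+1}$ itself. Concretely, I would take the $L^2$ inner product of \eqref{stokes} with $\bu^{n+1}-\bu^n$. Writing $D^{n+1}:=\bu^{n+1}-\bu^n$, the time-difference term yields $\tfrac1k\|D^{n+1}\|^2$; integrating by parts in the viscous and penalty terms (the boundary contributions vanish since $D^{n+1}\in X=H_0^1$) converts them into $\nu(\nabla\bu^{n+1},\nabla D^{n+1})$ and $\tfrac{1}{\epsilon_{n+1}}(\nabla\cdot\bu^{n+1},\nabla\cdot D^{n+1})$. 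Applying the polarization identity \eqref{polar} to each of these inner products produces telescoping energy differences together with the strictly dissipative terms $\tfrac{\nu}{2}\|\nabla D^{n+1}\|^2$ and $\tfrac{1}{2\epsilon_{n+1}}\|\nabla\cdot D^{n+1}\|^2$ appearing on the left of the claimed bound.

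The crux, and what I expect to be the main obstacle, is that the penalty energy $\tfrac{1}{2\epsilon_{n+1}}\|\nabla\cdot\bu^{n+1}\|^2-\tfrac{1}{2\epsilon_{n+1}}\|\nabla\cdot\bu^n\|^2$ does \emph{not} telescope cleanly because the coefficient $1/\epsilon_{n+1}$ differs from $1/\epsilon_n$. Introducing the natural energy $E^n:=\tfrac{\nu}{2}\|\nabla\bu^n\|^2+\tfrac{1}{2\epsilon_n}\|\nabla\cdot\bu^n\|^2$, I would rewrite this difference as the penalty part of $E^{n+1}-E^n$ plus a defect $\big(\tfrac{1}{2\epsilon_n}-\tfrac{1}{2\epsilon_{n+1}}\big)\|\nabla\cdot\bu^n\|^2$. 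Moving the defect to the right and factoring out $\tfrac{1}{2\epsilon_n}\|\nabla\cdot\bu^n\|^2$, it equals $\big(\tfrac{\epsilon_n}{\epsilon_{n+1}}-1\big)\tfrac{1}{2\epsilon_n}\|\nabla\cdot\bu^n\|^2$. This is exactly where the hypothesis enters: $(1-k\alpha)\epsilon_n\le\epsilon_{n+1}$ gives $\tfrac{\epsilon_n}{\epsilon_{n+1}}-1\le\tfrac{k\alpha}{1-k\alpha}$, so the defect is bounded by a multiple of $k\,E^n$. Without the condition, decreasing $\epsilon$ too quickly makes this defect uncontrolled, which is the instability exhibited in \Cref{fig:ut_spike}.

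It then remains to treat the forcing and close the estimate. I would bound $(f^{n+1},D^{n+1})$ by Cauchy--Schwarz and Young's inequality \eqref{holder+young} as $\tfrac{1}{2k}\|D^{n+1}\|^2+\tfrac{k}{2}\|f^{n+1}\|^2$, absorbing the first term into the $\tfrac1k\|D^{n+1}\|^2$ already present and leaving exactly $\tfrac{1}{2k}\|D^{n+1}\|^2=\tfrac{k}{2}\|\tfrac{\bu^{n+1}-\bu^n}{k}\|^2$ on the left. Summing from $n=0$ to $M-1$ telescopes the energies to $E^M-E^0$, giving
\begin{equation*}
E^M+\sum_{n=0}^{M-1}\Big(\tfrac{k}{2}\|\tfrac{D^{n+1}}{k}\|^2+\tfrac{\nu}{2}\|\nabla D^{n+1}\|^2+\tfrac{1}{2\epsilon_{n+1}}\|\nabla\cdot D^{n+1}\|^2\Big)\le E^0+\sum_{n=0}^{M-1}\tfrac{k}{2}\|f^{n+1}\|^2+\sum_{n=0}^{M-1}\tfrac{k\alpha}{1-k\alpha}E^n .
\end{equation*}
Finally I would apply the discrete Gronwall inequality (\Cref{gronwall}) with $a_l=E^l$, $B=E^0$, $c_n=\tfrac12\|f^{n+1}\|^2$, and $d_n$ proportional to $\alpha$, the hypothesis $0\le k\alpha<1$ guaranteeing the multiplier $1/(1-k\alpha)$ is finite. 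Since $k\sum_{n=0}^{M-1}d_n\le \alpha T/(1-k\alpha)$, this produces the exponential factor $\exp(\alpha T)$ (to leading order in $k$) multiplying the data, which is the stated bound.
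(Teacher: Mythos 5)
Your proposal is correct and follows essentially the same route as the paper's proof: test \eqref{stokes} with $\bu^{n+1}-\bu^n$, apply the polarization identity, add and subtract $\tfrac{1}{2\epsilon_n}\|\nabla\cdot\bu^n\|^2$ to expose the non-telescoping defect, control it with the hypothesis on $\epsilon$, and close with Young's inequality and the discrete Gronwall lemma. The only (minor) divergence is in the defect bookkeeping: you bound $\tfrac{\epsilon_n}{\epsilon_{n+1}}-1$ directly from the stated hypothesis, getting $\tfrac{k\alpha}{1-k\alpha}$ and hence $\exp\!\big(\alpha T/(1-k\alpha)\big)$, whereas the paper first derives the slightly stronger condition $\tfrac{1}{1+k\alpha}\epsilon_n\le\epsilon_{n+1}$ (which gives exactly $k\alpha$ and the clean factor $\exp(\alpha T)$) and then relaxes it via $\tfrac{1}{1+k\alpha}\ge 1-k\alpha$ --- your version is arguably the more rigorous reading of the hypothesis as stated, at the cost of a marginally larger exponential constant.
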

\begin{remark}
If $\alpha=0$ in \eqref{epsilon-restriction}, i.e., if $\epsilon_{n+1}\geq\epsilon_n$ for all n, then we have unconditional stability. 
\end{remark}
\begin{proof}
Take the $L^2$ inner product of \eqref{stokes} with $\bu^{n+1}-\bu^n$,
\begin{equation*}
    \frac{1}{k}\|\bu^{n+1}-\bu^n\|^2+\nu\left(\nabla\bu^{n+1},\nabla(\bu^{n+1}-\bu^n)\right)+\frac{1}{\epsilon_{n+1}}\left(\nabla\cdot\bu^{n+1},\nabla\cdot(\bu^{n+1}-\bu^n)\right)=(f^{n+1},\bu^{n+1}-\bu^n).
\end{equation*}
We will address terms successively.
Denote $\gamma_{n+1}=1/\epsilon_{n+1}$ and apply the polarization identity \eqref{polar} to the second and the third terms on the left,
\begin{align*}
    \nu\left(\nabla\bu^{n+1},\nabla(\bu^{n+1}-\bu^n)\right)&=\frac{\nu}{2}\left(\|\nabla\bu^{n+1}\|^2-\|\nabla\bu^n\|^2+\|\nabla(\bu^{n+1}-\bu^n)\|^2\right),\\
    \frac{1}{\epsilon_{n+1}}\left(\nabla\cdot\bu^{n+1},\nabla\cdot(\bu^{n+1}-\bu^n)\right)&=\frac{\gamma_{n+1}}{2}\left(\|\nabla\cdot\bu^{n+1}\|^2-\|\nabla\cdot\bu^n\|^2+\|\nabla\cdot(\bu^{n+1}-\bu^n)\|^2\right).
\end{align*}
By adding and subtracting $\gamma_n\|\nabla\cdot\bu^n\|^2/2$, we have
\begin{align*}
    \frac{\gamma_{n+1}}{2}\left(\|\nabla\cdot\bu^{n+1}\|^2-\|\nabla\cdot\bu^n\|^2+\|\nabla\cdot(\bu^{n+1}-\bu^n)\|^2\right)
    &=\frac{\gamma_{n+1}}{2}\|\nabla\cdot\bu^{n+1}\|^2-\frac{\gamma_n}{2}\|\nabla\cdot\bu^n\|^2\\
    &+\frac{\gamma_{n+1}}{2}\|\nabla\cdot(\bu^{n+1}-\bu^n)\|^2+\frac{\gamma_n-\gamma_{n+1}}{2}\|\nabla\cdot\bu^n\|^2.
\end{align*}
By Cauchy-Schwarz and Young's inequalities \eqref{holder+young}
\begin{equation*}
    (f^{n+1},\bu^{n+1}-\bu^n)\leq\frac{k}{2}\|f^{n+1}\|^2+\frac{1}{2k}\|\bu^{n+1}-\bu^n\|^2.
\end{equation*}
By combining similar terms, we have
\begin{align*}
    \frac{1}{2k}\|\bu^{n+1}-\bu^n\|^2+\left[\left(\frac{\nu}{2}\|\nabla\bu^{n+1}\|^2+\frac{\gamma_{n+1}}{2}\|\nabla\cdot\bu^{n+1}\|^2\right)-\left(\frac{\nu}{2}\|\nabla\bu^n\|^2+\frac{\gamma_n}{2}\|\nabla\cdot\bu^n\|^2\right)\right]\\
    +\frac{\nu}{2}\|\nabla(\bu^{n+1}-\bu^n)\|^2+\frac{\gamma_{n+1}}{2}\|\nabla\cdot(\bu^{n+1}-\bu^n)\|^2+\frac{\gamma_n-\gamma_{n+1}}{2}\|\nabla\cdot\bu^n\|^2
    \leq\frac{k}{2}\|f^{n+1}\|^2.
\end{align*}
{
Moving $(\gamma_n-\gamma_{n+1})/2\|\nabla\cdot \bu^n\|^2$ to the right. We obtain
\begin{align*}
    \frac{1}{2k}\|\bu^{n+1}-\bu^n\|^2+\left[\left(\frac{\nu}{2}\|\nabla\bu^{n+1}\|^2+\frac{\gamma_{n+1}}{2}\|\nabla\cdot\bu^{n+1}\|^2\right)-\left(\frac{\nu}{2}\|\nabla\bu^n\|^2+\frac{\gamma_n}{2}\|\nabla\cdot\bu^n\|^2\right)\right]\\
    +\frac{\nu}{2}\|\nabla(\bu^{n+1}-\bu^n)\|^2+\frac{\gamma_{n+1}}{2}\|\nabla\cdot(\bu^{n+1}-\bu^n)\|^2\leq\frac{k}{2}\|f^{n+1}\|^2+\frac{\gamma_{n+1}-\gamma_n}{2}\|\nabla\cdot\bu^n\|^2,\\
    =\frac{k}{2}\|f^{n+1}\|^2+k\left(\frac{\gamma_{n+1}-\gamma_n}{k\gamma_n}\right)\left(\frac{\gamma_n}{2}\|\nabla\cdot\bu^n\|^2\right).
\end{align*}
For each fixed constant $\alpha\geq 0$, we need $(\gamma_{n+1}-\gamma_n)/k\gamma_n\leq\alpha$ to avoid catastrophic growth. This leads to
\begin{align*}
    \gamma_{n+1}-\gamma_n\leq k\alpha\gamma_n,\quad
    \left(\frac{1}{\epsilon_{n+1}}-\frac{1}{\epsilon_n}\right)\epsilon_n\epsilon_{n+1}\leq k\alpha\frac{1}{\epsilon_n}\epsilon_n\epsilon_{n+1},\quad
    \epsilon_n\leq (1+k\alpha)\epsilon_{n+1},\quad
    \frac{1}{1+k\alpha}\epsilon_n\leq\epsilon_{n+1}.
\end{align*}
If $k\alpha  <1$, we approximate with the first two terms of the Taylor expansion
\begin{equation*}
    \frac{1}{1+k\alpha}  \geq 1-k\alpha
\end{equation*}
Thus we have the stability condition on $\epsilon$
\begin{equation}\label{epsilon-restriction}
    (1-k\alpha)\epsilon_n\leq\epsilon_{n+1}.
\end{equation}
Under condition \eqref{epsilon-restriction}
\begin{equation}\label{stability-ineq}
\begin{aligned}
    \frac{1}{2k}\|\bu^{n+1}-\bu^n\|^2+\left[\left(\frac{\nu}{2}\|\nabla\bu^{n+1}\|^2+\frac{1}{2\epsilon_{n+1}}\|\nabla\cdot\bu^{n+1}\|^2\right)-\left(\frac{\nu}{2}\|\nabla\bu^n\|^2+\frac{1}{2\epsilon_n}\|\nabla\cdot\bu^n\|^2\right)\right]\\
    +\frac{\nu}{2}\|\nabla(\bu^{n+1}-\bu^n)\|^2+\frac{1}{2\epsilon_{n+1}}\|\nabla\cdot(\bu^{n+1}-\bu^n)\|^2\leq\frac{k}{2}\|f^{n+1}\|^2+k\alpha\left(\frac{1}{2\epsilon_n}\|\nabla\cdot\bu^n\|^2\right).
\end{aligned}
\end{equation}
Sum from $n=0,1,\dots, M-1$
\begin{align*}
    \sum_{n=0}^{M-1}\left(\frac{1}{2k}\|\bu^{n+1}-\bu^n\|^2+\frac{\nu}{2}\|\nabla(\bu^{n+1}-\bu^n)\|^2+\frac{1}{2\epsilon_{n+1}}\|\nabla\cdot (\bu^{n+1}-\bu^n)\|^2\right)+\frac{\nu}{2}\|\nabla\bu^M\|^2+\frac{1}{2\epsilon_M}\|\nabla\cdot \bu^M\|^2\\
    \leq\frac{\nu}{2}\|\nabla\bu^0\|^2+\frac{1}{2\epsilon_0}\|\nabla\cdot\bu^0\|^2+\sum_{n=0}^{M-1}\frac{k}{2}\|f^{n+1}\|^2+k\sum_{n=0}^{M-1}\alpha\left(\frac{1}{2\epsilon_n}\|\nabla\cdot\bu^n\|^2\right).
\end{align*}
Apply the Gronwall inequality \eqref{gronwall}
\begin{align*}
    \sum_{n=0}^{M-1}\left(\frac{1}{2k}\|\bu^{n+1}-\bu^n\|^2+\frac{\nu}{2}\|\nabla(\bu^{n+1}-\bu^n)\|^2+\frac{1}{2\epsilon_{n+1}}\|\nabla\cdot (\bu^{n+1}-\bu^n)\|^2\right)+\frac{\nu}{2}\|\nabla\bu^M\|^2+\frac{1}{2\epsilon_M}\|\nabla\cdot \bu^M\|^2\\
    \leq \exp(k\sum_{n=0}^{M-1}\alpha)\left\{\frac{\nu}{2}\|\nabla\bu^0\|^2+\frac{1}{2\epsilon_0}\|\nabla\cdot\bu^0\|^2+\sum_{n=0}^{M-1}\frac{k}{2}\|f^{n+1}\|^2\right\},\\
    =\exp(\alpha T)\left\{\frac{\nu}{2}\|\nabla\bu^0\|^2+\frac{1}{2\epsilon_0}\|\nabla\cdot\bu^0\|^2+\sum_{n=0}^{M-1}\frac{k}{2}\|f^{n+1}\|^2\right\}.
\end{align*}
Thus we proved that, if $(1-k\alpha)\epsilon_n\leq\epsilon_{n+1}$ for some $\alpha\geq0$ and $\alpha k<1$, stability of  discrete $\bu_t$ holds.

}
\end{proof}
\begin{remark}
When $\epsilon$ decreases, \eqref{epsilon-restriction} is needed to ensure the boundedness of discrete $\|\bu_t\|$; If \eqref{epsilon-restriction} does not hold, $\|\bu_t\|$ may have catastrophic growth see \Cref{fig:ut_spike}.
\end{remark}
\begin{theorem}
Let $\bu$ be the solution to penalized NSE \eqref{pnse1}-\eqref{pnse1b}, then $\bu_t\in L^{4/3}(0,T;H^{-1})$, equivalently
\begin{equation*}
    \int_0^T\|\bu_t\|_{-1}^{4/3}\ dt<C( \bu^0, f, k, \nu, T,\min_{t^*\in [0,T]}\epsilon(t^*)).
\end{equation*}
\end{theorem}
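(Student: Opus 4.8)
The plan is to test the weak form of the penalized equations against an arbitrary $v\in X$, solve for $(\bu_t,v)$, and estimate $\|\bu_t\|_{-1}$ through the dual-norm characterization $\|\bu_t\|_{-1}=\sup_{0\neq v\in X}(\bu_t,v)/\|\nabla v\|$. From the weak formulation,
\begin{equation*}
(\bu_t,v)=(f,v)-b^*(\bu,\bu,v)-\nu(\nabla\bu,\nabla v)-\frac{1}{\epsilon}(\nabla\cdot\bu,\nabla\cdot v),
\end{equation*}
so I would bound each term on the right by a constant times $\|\nabla v\|$. The linear terms are immediate: $(f,v)\leq\|f\|_{-1}\|\nabla v\|$, $\nu(\nabla\bu,\nabla v)\leq\nu\|\nabla\bu\|\|\nabla v\|$, and $\frac{1}{\epsilon}(\nabla\cdot\bu,\nabla\cdot v)\leq\frac{C}{\epsilon}\|\nabla\cdot\bu\|\|\nabla v\|$ using $\|\nabla\cdot v\|\leq\sqrt{d}\,\|\nabla v\|$. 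For the nonlinear term the crucial choice is the sharp trilinear estimate from \Cref{skewsymsharper}, namely $b^*(\bu,\bu,v)\leq C_2\sqrt{\|\bu\|\|\nabla\bu\|}\,\|\nabla\bu\|\,\|\nabla v\|=C_2\|\bu\|^{1/2}\|\nabla\bu\|^{3/2}\|\nabla v\|$. Dividing by $\|\nabla v\|$ and taking the supremum yields the pointwise-in-time bound
\begin{equation*}
\|\bu_t\|_{-1}\leq\|f\|_{-1}+C_2\|\bu\|^{1/2}\|\nabla\bu\|^{3/2}+\nu\|\nabla\bu\|+\frac{C}{\epsilon}\|\nabla\cdot\bu\|.
\end{equation*}

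Next I would raise this to the power $4/3$, use the elementary inequality $(a+b+c+d)^{4/3}\leq 4^{1/3}(a^{4/3}+b^{4/3}+c^{4/3}+d^{4/3})$, and integrate over $(0,T)$. The whole point of the exponent $4/3$ is that it is exactly matched to the energy estimate: the continuous energy equality $\frac{1}{2}\frac{d}{dt}\|\bu\|^2+\nu\|\nabla\bu\|^2+\frac{1}{\epsilon}\|\nabla\cdot\bu\|^2=(f,\bu)$ (recorded in the motivation section) integrates to give $\bu\in L^\infty(0,T;L^2)$ together with $\int_0^T\nu\|\nabla\bu\|^2\,dt<\infty$ and $\int_0^T\frac{1}{\epsilon}\|\nabla\cdot\bu\|^2\,dt<\infty$. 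Consequently the nonlinear contribution becomes
\begin{equation*}
\int_0^T\big(\|\bu\|^{1/2}\|\nabla\bu\|^{3/2}\big)^{4/3}\,dt=\int_0^T\|\bu\|^{2/3}\|\nabla\bu\|^2\,dt\leq\Big(\sup_{[0,T]}\|\bu\|^{2/3}\Big)\int_0^T\|\nabla\bu\|^2\,dt<\infty,
\end{equation*}
which is finite precisely because the time-derivative norm is integrated only to the power $4/3$ rather than $2$.

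For the remaining terms, $\int_0^T\|f\|_{-1}^{4/3}\,dt$ is controlled by assuming $f\in L^2(0,T;H^{-1})$ and Hölder on the finite interval, while $\int_0^T(\nu\|\nabla\bu\|)^{4/3}\,dt\leq\nu^{4/3}T^{1/3}\big(\int_0^T\|\nabla\bu\|^2\,dt\big)^{2/3}$ is finite by the energy bound. The penalty term is where the dependence on $\min_{t^*}\epsilon(t^*)$ enters: writing $(\frac{1}{\epsilon}\|\nabla\cdot\bu\|)^{4/3}=\epsilon^{-2/3}(\frac{1}{\epsilon}\|\nabla\cdot\bu\|^2)^{2/3}$ and applying Hölder with exponents $3$ and $3/2$,
\begin{equation*}
\int_0^T\Big(\frac{1}{\epsilon}\|\nabla\cdot\bu\|\Big)^{4/3}dt\leq\Big(\int_0^T\epsilon^{-2}\,dt\Big)^{1/3}\Big(\int_0^T\frac{1}{\epsilon}\|\nabla\cdot\bu\|^2\,dt\Big)^{2/3}\leq\Big(\frac{T}{(\min_{t^*}\epsilon)^2}\Big)^{1/3}\Big(\int_0^T\frac{1}{\epsilon}\|\nabla\cdot\bu\|^2\,dt\Big)^{2/3},
\end{equation*}
which is finite and accounts for the appearance of $\min_{t^*\in[0,T]}\epsilon(t^*)$ in the final constant. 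Collecting the four contributions gives the stated bound.

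I expect the only genuine subtlety to be organizing the nonlinear estimate so that the integrated power lands exactly at $4/3$; this is the reason the sharp form $b^*(\bu,\bu,v)\leq C_2\|\bu\|^{1/2}\|\nabla\bu\|^{3/2}\|\nabla v\|$ (rather than $C_1\|\nabla\bu\|^2\|\nabla v\|$) must be used, since only the former converts the $L^\infty(0,T;L^2)\cap L^2(0,T;H^1)$ regularity of $\bu$ into $L^{4/3}$ integrability of $\|\bu_t\|_{-1}$ in time. Everything else reduces to the continuous energy estimate and Hölder's inequality. The resulting constant then depends on $\bu^0$ and $f$ through the energy bounds, on $\nu$ and $T$, and on $\min_{t^*\in[0,T]}\epsilon(t^*)$ through the penalty term (with $k$ entering only through the prescription of the adapted parameter $\epsilon(t)$), exactly as asserted.
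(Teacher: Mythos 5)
Your proposal is correct and follows essentially the same route as the paper: test the weak form against $v$, use the sharp trilinear bound $b^*(\bu,\bu,v)\leq C\|\bu\|^{1/2}\|\nabla\bu\|^{3/2}\|\nabla v\|$ together with the dual-norm characterization to get the pointwise bound on $\|\bu_t\|_{-1}$, then raise to the $4/3$ power and close with the energy bounds on $\int_0^T\|\nabla\bu\|^2\,dt$ and $\int_0^T\frac{1}{\epsilon(t)}\|\nabla\cdot\bu\|^2\,dt$ plus H\"older in time. Your handling of the penalty term (factoring out $\epsilon^{-2/3}$ before H\"older) is only cosmetically different from the paper's (pulling out $\max_{t^*}\epsilon(t^*)^{-2/3}$ first) and yields the same dependence on $\min_{t^*\in[0,T]}\epsilon(t^*)$.
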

\begin{proof}
Recall
\begin{equation*}
    \|\bu_t\|_{-1}=\sup_{v\in X}\frac{(\bu_t,v)}{\|\nabla v\|}.
\end{equation*}
By skew-symmetry 
\begin{align*}
    (\bu_t,v)=-\int_\Omega b^*(\bu,\bu,v) dx-\nu(\nabla\bu,\nabla v)-\frac{1}{\epsilon(t)}(\nabla\cdot\bu,\nabla\cdot v)+(f,v),\\
    \leq C(\|\bu\|^{1/2}\|\nabla\bu\|^{1/2})\|\nabla v\|\|\nabla\bu\|+\nu\|\nabla\bu\|\|\nabla v\|+\frac{1}{\epsilon(t)}\|\nabla\cdot\bu\|\|\nabla v\|+\|f\|_{-1}\|\nabla v\|.
\end{align*}
Thus,
\begin{align*}
    \frac{(\bu_t,v)}{\|\nabla v\|}\leq C\|\bu\|^{1/2}\|\nabla\bu\|^{3/2}+\nu\|\nabla\bu\|+\frac{1}{\epsilon(t)}\|\nabla\cdot\bu\|+\|f\|_{-1}.
\end{align*}
$\|\bu\|$ is bounded by the problem data and initial condition from the stability of the velocity Theorem 3.1.
\begin{equation*}
      \|\bu_t\|_{-1}\leq C(\bu^0, f, k, \nu)\|\nabla\bu\|^{3/2}+\nu\|\nabla\bu\|+\frac{1}{\epsilon(t)}\|\nabla\cdot\bu\|+\|f\|_{-1}.
\end{equation*}
Then
\begin{align*}
    \int_0^T \|\bu_t\|_{-1}^{4/3}\ dt\leq C( \bu^0, f, k, \nu)\int_0^T \|\nabla\bu\|^2\ dt+C(\nu)\int_0^T\|\nabla\bu\|^{4/3}\ dt\\
    +C\int_0^T\left(\frac{1}{\epsilon(t)}\|\nabla\cdot \bu\|\right)^{4/3}dt+C\int_0^T\|f\|_{-1}^{4/3}dt.
\end{align*}
From Theorem 3.1 the stability bound
\begin{equation*}
    \int_0^T\|\nabla\bu\|^2\ dt<C( \bu^0, f, k, \nu),\quad \text{and}\quad \int_0^T\frac{1}{\epsilon(t)}\|\nabla\cdot \bu\|^2\ dt<C( \bu^0, f, k, \nu).
\end{equation*}
By H\"{o}lder's inequality \eqref{holder+young}
\begin{align*}
    \int_0^T\|\nabla\bu\|^{4/3}\ dt&\leq \left(\int_0^T 1^3\ dt\right)^{1/3}\left(\int_0^T (\|\nabla\bu\|^{4/3})^{3/2}\ dt\right)^{2/3}=C(T)\left(\int_0^T\|\nabla\bu\|^2\ dt\right)^{2/3},\\
    \int_0^T\left(\frac{1}{\epsilon(t)}\|\nabla\cdot \bu\|\right)^{4/3}\ dt&\leq \max_{t^*\in[0,T]}\left(\frac{1}{\epsilon(t^*)}\right)^{2/3}\left(\int_0^T 1^3\ dt\right)^{1/3} \left(\int_0^T\left(\frac{1}{\epsilon(t)^{2/3}}\|\nabla\cdot\bu\|^{4/3}\right)^{3/2}\ dt\right)^{2/3},\\
    &=C(T,\min_{t^*\in [0,T]} \epsilon(t^*))\left(\int_0^T \frac{1}{\epsilon(t)}\|\nabla\cdot\bu\|^2\ dt\right)^{2/3}.
\end{align*}
Then the result follows.
\end{proof}

\section{Error Analysis} Next, we will prove an error estimate for the semi-discrete, variable-$\epsilon$ penalty method.
Find $(\bu_\epsilon^h,p_\epsilon^h)\in (X^h,Q^h)$ such that 
\begin{equation}\label{eq:vform}
\begin{aligned}
    (\bu_{\epsilon,t}^h,v^h)+b^*(\bu_\epsilon^h,\bu_\epsilon^h,v^h)+\nu(\nabla \bu_\epsilon^h,\nabla v^h)-(p_\epsilon^h,\nabla\cdot v^h)+(q^h,\nabla\cdot\bu_\epsilon^h)+\epsilon(t)(p_\epsilon^h,q^h)=(f,v^h),
\end{aligned}
\end{equation}
for all $(v^h,q^h)\in (X^h,Q^h)$.
\begin{definition}
(Stokes Projection \cite{stokes-projection-labovsky2009stabilized}) The Stokes projection operator \\
$P_S:(X,Q)\to (X^h,Q^h),P_S(\bu,p)=(\Tilde{\bu},\Tilde{p})$, satisfies
\begin{equation}\label{stokes-proj}
\begin{aligned}
    \nu(\nabla(\bu-\Tilde{\bu}),\nabla v^h)-(p-\Tilde{p},\nabla\cdot v^h)&=0,\\
        (\nabla\cdot(\bu-\Tilde{\bu}), q^h)&=0,
\end{aligned}
\end{equation}
for any $v^h\in X^h, q^h\in Q^h$.
\end{definition}
\begin{proposition}
(Error estimate for the Stokes Projection) Suppose the discrete inf-sup condition \eqref{lbbh} holds. Let $C_1$ be a constant independent of $h$ and $\nu$ and $C_2=C(\nu,\Omega)$. If $\Omega$ is a convex polygonal/polyhedral domain, then the error in the Stokes Projection \eqref{stokes-proj} satisfies
\begin{gather*}
    \|p-\tilde{p}\|\leq \frac{\nu}{\beta^h}\|\nabla(\bu-\Tilde{\bu})\|,\\
    \nu\|\nabla(\bu-\Tilde{\bu})\|^2\leq C_1[\nu\inf_{v^h\in X^h}\|\nabla(\bu-v^h)\|^2+\nu^{-1}\inf_{q^h\in Q^h}\|p-q^h\|^2],\\
    \text{and}\ \|\bu-\Tilde{\bu}\|\leq C_2h\left(\inf_{v^h\in X^h}\|\nabla(\bu-v^h)\|+\inf_{q^h\in Q^h}\|p-q^h\|\right).
\end{gather*}
\end{proposition}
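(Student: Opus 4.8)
The plan is to prove the three bounds in the stated order, since the pressure bound feeds the $H^1$ velocity bound, which in turn feeds the $L^2$ velocity bound via duality.

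For the pressure estimate I would use the discrete inf-sup condition \eqref{lbbh}. Since $\tilde p\in Q^h$, for any $q^h\in Q^h$ the function $\tilde p-q^h$ lies in $Q^h$, so \eqref{lbbh} gives
\[
\beta^h\|\tilde p-q^h\|\le\sup_{v^h\in X^h}\frac{(\tilde p-q^h,\divv v^h)}{\|\nabla v^h\|}.
\]
I would rewrite the numerator with the first identity of \eqref{stokes-proj}, namely $(p-\tilde p,\divv v^h)=\nu(\nabla(\bu-\tilde\bu),\nabla v^h)$, to obtain $(\tilde p-q^h,\divv v^h)=(p-q^h,\divv v^h)-\nu(\nabla(\bu-\tilde\bu),\nabla v^h)$, and then bound by Cauchy--Schwarz together with $\|\divv v^h\|\le\|\nabla v^h\|$. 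The triangle inequality $\|p-\tilde p\|\le\|p-q^h\|+\|\tilde p-q^h\|$ and an infimum over $q^h$ then deliver the pressure bound, the velocity-gradient contribution producing the stated factor $\nu/\beta^h$.

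For the $H^1$ velocity estimate I would run a C\'ea/energy argument. Split $\bu-\tilde\bu=(\bu-v^h)-(\tilde\bu-v^h)=:\eta-\phi^h$ with $\phi^h\in X^h$, and test the first line of \eqref{stokes-proj} with $v^h=\phi^h$ to get $\nu\|\nabla\phi^h\|^2=\nu(\nabla\eta,\nabla\phi^h)-(p-\tilde p,\divv\phi^h)$. The pressure term is the delicate one: I would insert an arbitrary $q^h\in Q^h$ and use the second line of \eqref{stokes-proj}, $(\divv(\bu-\tilde\bu),q^h)=0$, to replace $(p-\tilde p,\divv\phi^h)$ by a combination of $(p-q^h,\divv\phi^h)$ and $(p-q^h,\divv\eta)$, so that the whole right-hand side is controlled by $\|\nabla\eta\|$, $\|p-q^h\|$ and $\|\nabla\phi^h\|$. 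Young's inequality \eqref{holder+young}, with the weights split to preserve the $\nu$-scaling, absorbs $\|\nabla\phi^h\|$; then $\|\nabla(\bu-\tilde\bu)\|\le\|\nabla\eta\|+\|\nabla\phi^h\|$ followed by infima over $v^h$ and $q^h$ yields the claimed bound with the $\nu$ and $\nu^{-1}$ weights.

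For the $L^2$ velocity estimate I would use an Aubin--Nitsche duality argument, which I expect to be the main obstacle. Introduce the adjoint Stokes problem with right-hand side $\bu-\tilde\bu$: find $(\Phi,\lambda)$ with $-\nu\Delta\Phi+\nabla\lambda=\bu-\tilde\bu$ and $\divv\Phi=0$; convexity of $\Omega$ supplies the regularity estimate $\|\Phi\|_2+\|\lambda\|_1\le C(\nu,\Omega)\|\bu-\tilde\bu\|$. I would write $\|\bu-\tilde\bu\|^2$ as the pairing of $\bu-\tilde\bu$ against this dual residual, integrate by parts to reach $\nu(\nabla(\bu-\tilde\bu),\nabla\Phi)-(\divv(\bu-\tilde\bu),\lambda)$, and subtract a discrete interpolant $(\Phi^h,\lambda^h)$. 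The Galerkin orthogonality in \eqref{stokes-proj} kills the $\lambda^h$ contribution and, using $\divv\Phi=0$, turns the remaining discrete term into $(p-\tilde p,\divv(\Phi-\Phi^h))$, which the already-proved pressure bound controls by $\|\nabla(\bu-\tilde\bu)\|$. The approximation properties \eqref{approx-prop} and the negative-norm estimate convert the $H^2\times H^1$ norms of $(\Phi,\lambda)$ into one power of $h$, so every term is bounded by $h\,\|\nabla(\bu-\tilde\bu)\|\,\|\bu-\tilde\bu\|$; dividing out $\|\bu-\tilde\bu\|$ and invoking the second estimate to replace $\|\nabla(\bu-\tilde\bu)\|$ by the best-approximation quantities gives the result. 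The care needed here is matching the pressure and divergence terms in the dual pairing and tracking the $\nu$-dependence so the constant is exactly $C_2=C(\nu,\Omega)$.
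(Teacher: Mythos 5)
Your proposal goes well beyond what the paper actually writes down: the paper proves only the pressure bound in-house and cites Proposition 2.2 and Remark 2.2 of the Labovsky et al.\ reference for the $H^1$ and $L^2$ velocity estimates, whereas you supply the standard C\'ea-type energy argument and the Aubin--Nitsche duality argument, which is exactly the route that reference takes, so those two parts are fine in approach. The genuine divergence is in the pressure estimate. The paper applies the discrete inf-sup condition \eqref{lbbh} directly to $p-\tilde p$ and substitutes the first identity of \eqref{stokes-proj} to get the clean bound $\|p-\tilde p\|\leq(\nu/\beta^h)\|\nabla(\bu-\Tilde{\bu})\|$; strictly, $p-\tilde p\notin Q^h$, so this is the usual mild abuse of \eqref{lbbh}. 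You avoid that abuse by applying \eqref{lbbh} to $\tilde p-q^h\in Q^h$ and triangulating, which is the rigorous route but necessarily leaves an extra best-approximation term $\inf_{q^h\in Q^h}\|p-q^h\|$ (with a constant of order $1+1/\beta^h$) in the final estimate. So you do not recover the first inequality exactly as stated; you prove the standard, slightly weaker-looking version. Since the proposition is only invoked later through combined bounds in which $\inf_{q^h}\|p_\epsilon-q^h\|$ already appears, your version is harmless downstream, but you should state explicitly that your pressure estimate differs from the one claimed.

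One imprecision in your $H^1$ argument: with $\eta=\bu-v^h$ and $\phi^h=\Tilde{\bu}-v^h$, inserting $q^h$ and using $(\nabla\cdot(\bu-\Tilde{\bu}),\lambda^h)=0$ leaves you with $(p-q^h,\nabla\cdot\phi^h)-(\tilde p-q^h,\nabla\cdot\eta)$, not two terms both involving $p-q^h$. The factor $\|\tilde p-q^h\|$ must itself be bounded, e.g.\ by \eqref{lbbh} (legitimately this time) together with the first line of \eqref{stokes-proj}, giving $\beta^h\|\tilde p-q^h\|\leq \nu\|\nabla(\bu-\Tilde{\bu})\|+C\|p-q^h\|$, after which Young's inequality absorbs the $\nu\|\nabla(\bu-\Tilde{\bu})\|\,\|\nabla\eta\|$ contribution and the estimate closes with the stated $\nu$ and $\nu^{-1}$ weights. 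This is routine, but it is the one step your sketch papers over.
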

\begin{proof}
From the first equation of \eqref{stokes-proj},
\begin{equation*}
    (p-\Tilde{p},\nabla\cdot v^h)=\nu(\nabla(\bu-\Tilde{\bu}),\nabla v^h).
\end{equation*}
By the discrete inf-sup condition \eqref{lbbh},
\begin{align*}
    \beta^h\|p-\tilde{p}\|&\leq \sup_{v^h\in X^h}\frac{(p-\Tilde{p},\nabla\cdot v^h)}{\|\nabla v^h\|}=\sup_{v^h\in X^h}\frac{\nu(\nabla(\bu-\Tilde{\bu}),\nabla v^h)}{\|\nabla v^h\|},\\
    &\leq\sup_{v^h\in X^h}\frac{\nu\|\nabla(\bu-\Tilde{\bu})\|\|\nabla v^h\|}{\|\nabla v^h\|}=\nu \|\nabla (\bu-\Tilde{\bu})\|.
\end{align*}
For detailed proof of the last two inequalities, see Proposition 2.2 and Remark 2.2 of \cite{stokes-projection-labovsky2009stabilized}.
\end{proof}

We also need an estimator for $\|\nabla(\bu-\Tilde{\bu})_t\|$. Take the partial derivative with respect to time $t$ of \eqref{stokes-proj} to yield
\begin{align*}
    \nu(\nabla(\bu-\Tilde{\bu})_t,\nabla v^h)-((p-\Tilde{p})_t,\nabla\cdot v^h)=0,\\
    (\nabla\cdot(\bu-\Tilde{\bu})_t,q^h)=0,
\end{align*}
for all $v^h\in X^h, q^h\in Q^h$.

Let $v^h=\phi^h_t, q^h=(\Tilde{p}-I(p))_t$, by a similar argument as in Proposition 4.2, we have
\begin{equation}\label{eta-t}
\begin{aligned}
     \nu\|\nabla(\bu-\Tilde{\bu})_t\|^2
     \leq C[\nu\inf_{v^h\in X^h}\|\nabla(\bu-v^h)_t\|^2
     +\nu^{-1}
     \inf_{q^h\in Q^h}\|(p-q^h)_t\|^2],\\
\end{aligned}
\end{equation}
where $C$ is a constant independent of $h$ and $\nu$.
\begin{theorem}\label{error analysis}
(Error Analysis of semi-discrete variable $\epsilon$ penalty method) Let $(X^h, Q^h)$ be the finite element spaces satisfying \eqref{approx-prop} and \eqref{lbbh}. Let $\bu_\epsilon$ be a solution of \eqref{pnse1}. Suppose the interpolation estimate \eqref{negative-norm} in $H^{-1}(\Omega)$ holds and $\|\nabla\bu_\epsilon\|\in L^4(0,T)$, then we have the following error estimate:
\begin{gather*}
    \sup_{0\leq t\leq T}\|(\bu_\epsilon-\bu_\epsilon^h)(t)\|^2+\int_0^T \frac{\nu}{4}\|\nabla(\bu_\epsilon-\bu_\epsilon^h)\|^2\ dt \leq\\
     e^{\int_0^T a(t) dt} \Big\{\|(\bu_\epsilon-\bu_\epsilon^h)(0)\|^2+\int_0^T C(\nu,\beta^h)\epsilon(t)h^{2m}\left(\|\bu_\epsilon\|^2_{H^{m+1}(\Omega)}+\|p_\epsilon\|^2_{H^m(\Omega)}\right)\ dt
    +\max_{0\leq t\leq T}\|\bu_\epsilon-v^h\|^2\Big\}\\
    +C(\nu,\Omega)\Big[(h^{-d/3}+h^{2-d/3}+h^{2-d/2})h^{2m}\left(\|\bu_\epsilon\|^2_{L^2(0,T;H^{m+1}(\Omega))}+\|p_\epsilon\|^2_{L^2(0,T;H^m(\Omega))}\right)\\
    +h^{2m+2}\left(\|\bu_{\epsilon,t}\|^2_{L^2(0,T;H^{m+1}(\Omega))}+\|p_{\epsilon,t}\|^2_{L^2(0,T;H^m(\Omega))}\right)\Big]\Big\},\\
    \text{where}\ a(t)=C(\nu)\|\nabla\bu_\epsilon\|^4+\frac{1}{4} .
\end{gather*}
\end{theorem}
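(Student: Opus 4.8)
The plan is to run the standard energy argument for finite-element error analysis: subtract the two variational equations, split the error through the Stokes projection of \eqref{stokes-proj}, and control the nonlinearity with the sharp trilinear bounds of \Cref{skewsymsharper}. Since $\bu_\epsilon$ solves \eqref{pnse1}--\eqref{pnse1b} with $p_\epsilon=-\tfrac{1}{\epsilon}\nabla\cdot\bu_\epsilon$, it satisfies the same form \eqref{eq:vform} as $(\bu_\epsilon^h,p_\epsilon^h)$ for every $(v^h,q^h)\in(X^h,Q^h)$; subtracting gives a Galerkin-orthogonality relation for $e=\bu_\epsilon-\bu_\epsilon^h$, $e_p=p_\epsilon-p_\epsilon^h$. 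I then write $e=\eta-\phi^h$ with $\eta=\bu_\epsilon-\Tilde{\bu}_\epsilon$ and $\phi^h=\bu_\epsilon^h-\Tilde{\bu}_\epsilon\in X^h$ (and similarly for the pressure), where $(\Tilde{\bu}_\epsilon,\Tilde{p}_\epsilon)$ is the Stokes projection. Testing with $v^h=\phi^h$ and the appropriate $q^h$, the defining identities \eqref{stokes-proj} annihilate $\eta$ in the viscous and divergence terms, leaving an evolution inequality for $\tfrac12\tfrac{d}{dt}\|\phi^h\|^2+\nu\|\nabla\phi^h\|^2$ driven by the time-derivative consistency term $(\eta_t,\phi^h)$, the pressure/penalty consistency terms, and the nonlinear difference.

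The core of the argument is the nonlinear term. Writing $b^*(\bu_\epsilon,\bu_\epsilon,\phi^h)-b^*(\bu_\epsilon^h,\bu_\epsilon^h,\phi^h)=b^*(e,\bu_\epsilon,\phi^h)+b^*(\bu_\epsilon^h,e,\phi^h)$ and inserting $e=\eta-\phi^h$, the term $b^*(\bu_\epsilon^h,\phi^h,\phi^h)$ vanishes by skew-symmetry, leaving $b^*(\eta,\bu_\epsilon,\phi^h)$, $-b^*(\phi^h,\bu_\epsilon,\phi^h)$ and $b^*(\bu_\epsilon^h,\eta,\phi^h)$. The critical term is $b^*(\phi^h,\bu_\epsilon,\phi^h)$: bounding it by $C\|\phi^h\|^{1/2}\|\nabla\phi^h\|^{3/2}\|\nabla\bu_\epsilon\|$ via \Cref{skewsymsharper} and applying Young's inequality absorbs a $\tfrac{\nu}{8}\|\nabla\phi^h\|^2$ into the left and produces the factor $C(\nu)\|\nabla\bu_\epsilon\|^4\|\phi^h\|^2$, which is exactly the Gronwall multiplier $a(t)$. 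The remaining two terms, involving the projection error $\eta$ and the discrete solution $\bu_\epsilon^h$, are bounded by combining \eqref{trilinear ineq}, the Sobolev embeddings \eqref{sobolev}--\eqref{sobolev2}, and, to convert discrete $L^p$ norms into $L^2$ norms, the $L^p$--$L^2$ inverse inequality \eqref{lp-l2inverse}; it is these inverse estimates, paired with the Stokes-projection approximation bound for $\eta$, that generate the three $h$-exponents $h^{-d/3}$, $h^{2-d/3}$, $h^{2-d/2}$.

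Finally I would collect terms. The time-derivative term $(\eta_t,\phi^h)$ is handled by Cauchy--Schwarz and Young together with the estimate \eqref{eta-t} for $\|\nabla\eta_t\|$ and the negative-norm bound \eqref{negative-norm}, contributing the $h^{2m+2}$ factor multiplying the time-derivative norms; the penalty consistency term contributes the $\epsilon(t)h^{2m}$ factor through the inf-sup/projection bound $\|p_\epsilon-\Tilde{p}_\epsilon\|\le\tfrac{\nu}{\beta^h}\|\nabla(\bu_\epsilon-\Tilde{\bu}_\epsilon)\|$. Integrating over $(0,T)$, using \eqref{approx-prop} to replace the infima in the Stokes-projection estimates by $h^{2m}(\|\bu_\epsilon\|^2_{H^{m+1}}+\|p_\epsilon\|^2_{H^m})$, and applying Gronwall's inequality (cf. \Cref{gronwall}) with $a(t)=C(\nu)\|\nabla\bu_\epsilon\|^4+\tfrac14$ — integrable on $(0,T)$ precisely because $\|\nabla\bu_\epsilon\|\in L^4(0,T)$ — bounds $\|\phi^h\|$. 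The triangle inequality $\|\bu_\epsilon-\bu_\epsilon^h\|\le\|\eta\|+\|\phi^h\|$ then yields the stated estimate.

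The main obstacle is extracting the precise powers $h^{-d/3}$, $h^{2-d/3}$, $h^{2-d/2}$ from the two non-critical nonlinear terms: this forces a careful, dimension-dependent choice of Hölder exponents in \eqref{trilinear ineq} and of the inverse inequality \eqref{lp-l2inverse}, all while keeping the coefficient of $\|\nabla\phi^h\|^2$ no larger than $\tfrac{\nu}{4}$ on the left. The time-varying penalty parameter $\epsilon(t)$ adds a secondary difficulty, since the $\epsilon(t)$-weighted consistency term must be tracked through to the final bound rather than absorbed into a constant.
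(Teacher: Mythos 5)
Your proposal follows the paper's proof essentially step for step: the same Stokes-projection splitting $e=\eta-\phi^h$, the same test functions, the same three-term decomposition of the nonlinearity with $b^*(\phi^h,\bu_\epsilon,\phi^h)$ producing the Gronwall factor $C(\nu)\|\nabla\bu_\epsilon\|^4$, the same use of the inverse inequality \eqref{lp-l2inverse} and Sobolev interpolation to generate the $h$-exponents, and the same handling of $(\eta_t,\phi^h)$ and the $\epsilon(t)$-weighted pressure term. The only cosmetic difference is that the paper implements Gronwall via the integrating factor $e^{-A(t)}$ rather than the discrete lemma you cite, which changes nothing.
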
 
\begin{proof}
We denote $(\bu_\epsilon, p_\epsilon)$ as penalty solutions to \eqref{pnse1}.\\ 
Multiplying first equation of \eqref{pnse1} by $v^h\in X^h$ and second equation of \eqref{pnse1} by $q^h\in Q^h$ gives
\begin{align}\label{eq:cont}
    (\bu_{\epsilon,t},v^h)+b^*(\bu_\epsilon,\bu_\epsilon,v^h)+\nu(\nabla \bu_\epsilon,\nabla v^h)-(p_\epsilon,\nabla\cdot v^h)+(q^h,\nabla\cdot\bu_\epsilon)+\epsilon(t)(p_\epsilon,q^h)=(f,v^h).
\end{align}
Subtract \eqref{eq:vform} from \eqref{eq:cont} and denote $e=\bu_\epsilon-\bu_\epsilon^h$,
\begin{align*}
    (e_t,v^h)+b^*(\bu_\epsilon,\bu_\epsilon,v^h)-b^*(\bu_\epsilon^h,\bu_\epsilon^h,v^h)+\nu(\nabla e,\nabla v^h)\\
    -(p_\epsilon-p_\epsilon^h,\nabla\cdot v^h)+(q^h,\nabla\cdot e)+\epsilon(t)(p_\epsilon-p_\epsilon^h,q^h)=0.
\end{align*}
Denote $\eta=\bu_\epsilon-\tilde{\bu}, \phi^h=\bu_\epsilon^h-\tilde{\bu},e=\eta-\phi^h$ and $ \tilde{\bu}\in X^h, \lambda^h\in Q^h$,
\begin{gather*}
    (\phi_t^h,v^h)+\nu(\nabla\phi^h,\nabla v^h)-(p_\epsilon^h-\lambda^h,\nabla\cdot v^h)+(q^h,\nabla\cdot\phi^h)+\epsilon(t)(p_\epsilon^h-\lambda^h,q^h)\\
    =(\eta_t,v^h)+\nu(\nabla\eta,\nabla v^h)-(p_\epsilon-\lambda^h,\nabla\cdot v^h)+(q^h,\nabla\cdot\eta)+\epsilon(t)(p_\epsilon-\lambda^h,q^h)\\
    +b^*(\bu_\epsilon,\bu_\epsilon,v^h)-b^*(\bu_\epsilon^h,\bu_\epsilon^h,v^h).
\end{gather*}
Pick $\tilde{\bu} \in X^h, \lambda^h \in Q^h$ to be the Stokes Projection \eqref{stokes-proj} of $(\bu_\epsilon,p_\epsilon)$ such that 
\begin{align*}
     \nu(\nabla(\bu_\epsilon-\tilde{\bu}),\nabla v^h)-(p_\epsilon-\lambda^h,\nabla\cdot v^h)=0& \text{ for all } v^h\in X^h ,\\
     (\nabla\cdot(\bu_\epsilon-\Tilde{\bu}),q^h)=0& \text{ for all } q^h\in Q^h .
\end{align*}
Set $v^h=\phi^h, q^h=p_\epsilon^h-\lambda^h$. We obtain,
\begin{equation*}
    \frac{1}{2}\frac{d}{dt}\|\phi^h\|^2+\nu\|\nabla\phi^h\|^2+\epsilon(t)\|p_\epsilon^h-\lambda^h\|^2=(\eta_t,\phi^h)+b^*(\bu_\epsilon,\bu_\epsilon,\phi^h)-b^*(\bu_\epsilon^h,\bu_\epsilon^h,\phi^h)+\epsilon(t)(p_\epsilon-\lambda^h,p_\epsilon^h-\lambda^h).
\end{equation*}
Consider the nonlinear terms
\begin{align*}
    b^*(\bu_\epsilon,\bu_\epsilon,\phi^h)-b^*(\bu_\epsilon^h,\bu_\epsilon^h,\phi^h)&=b^*(\bu_\epsilon,\bu_\epsilon,\phi^h)-b^*(\bu_\epsilon^h,\bu_\epsilon,\phi^h)+b^*(\bu_\epsilon^h,\bu_\epsilon,\phi^h)-b^*(\bu_\epsilon^h,\bu_\epsilon^h,\phi^h)\\
    =b^*(e,\bu_\epsilon,\phi^h)+b^*(\bu_\epsilon^h,e,\phi^h)
    &=b^*(\eta,\bu_\epsilon,\phi^h)-b^*(\phi^h,\bu_\epsilon,\phi^h)+b^*(\bu_\epsilon^h,\eta,\phi^h).
\end{align*}
Thus we have
\begin{align*}
    \frac{1}{2}\frac{d}{dt}\|\phi^h\|^2+\nu\|\nabla\phi^h\|^2+\epsilon(t)\|p_\epsilon^h-\lambda^h\|^2
    &=(\eta_t,\phi^h)+b^*(\eta,\bu_\epsilon,\phi^h)-b^*(\phi^h,\bu_\epsilon,\phi^h)+b^*(\bu_\epsilon^h,\eta,\phi^h)\\
    &+\epsilon(t)(p_\epsilon-\lambda^h,p_\epsilon^h-\lambda^h).
\end{align*}
Consider the right hand side terms of the equation 
\begin{align*}
    |(\eta_t,\phi^h)|\leq \frac{1}{2\nu}\|\eta_t\|_{-1}^2+\frac{\nu}{2}\|\nabla\phi^h\|^2,\quad
    |\epsilon(t)(p_\epsilon-\lambda^h,p_\epsilon^h-\lambda^h)|\leq\frac{\epsilon(t)}{2}\|p_\epsilon-\lambda^h\|^2+\frac{\epsilon(t)}{2}\|p_\epsilon^h-\lambda^h\|.
\end{align*}
Apply the trilinear inequality \eqref{trilinear ineq} to the first nonlinear term $b^*(\eta,\bu_\epsilon,\phi^h)$ to obtain
\begin{align*}
    |b^*(\eta,\bu_\epsilon,\phi^h)|&=\frac{1}{2}\left|(\eta\cdot\nabla\bu_\epsilon,\phi^h)-(\eta\cdot\nabla\phi^h,\bu_\epsilon)\right|
    \leq\frac{1}{2}[\|\eta\|_{L_4}\|\nabla\bu_\epsilon\|\|\phi^h\|_{L_4}+\|\eta\|_{L_4}\|\nabla\phi^h\|\|\bu_\epsilon\|_{L_4}].
\end{align*}
Using the Sobolev inequality \eqref{sobolev}, we have $\|\phi^h\|_{L_4}\leq C\|\nabla\phi^h\|$ and $\|\bu_\epsilon\|_{L_4}\leq C\|\nabla\bu_\epsilon\|$,
\begin{align*}
    |b^*(\eta,\bu_\epsilon,\phi^h)|
    &\leq C\|\eta\|_{L_4}\|\nabla\bu_\epsilon\|\|\nabla\phi^h\| 
    \leq \frac{\nu}{4}\|\nabla\phi^h\|^2+C(\nu)\|\nabla\bu_\epsilon\|^2\|\eta\|_{L_4}^2.
\end{align*}
Apply Lemma \eqref{skewsymsharper} to the term $|b^*(\phi^h,\bu_\epsilon,\phi^h)|$,
\begin{equation*}
    |b^*(\phi^h,\bu_\epsilon,\phi^h)|\leq C(\Omega)\|\phi^h\|^{1/2}\|\nabla\phi^h\|^{3/2}\|\nabla\bu_\epsilon\|.
\end{equation*}
Using H\"{o}lder's and Young's inequality \eqref{holder+young} with $p=4/3, q=4$,
\begin{equation*}
    |b^*(\phi^h,\bu_\epsilon,\phi^h)|\leq \frac{\nu}{16}\|\nabla\phi\|^2+C(\nu)\|\phi^h\|^2\|\nabla\bu_\epsilon\|^4.
\end{equation*}
Next, we bound the nonlinear term $b^*(\bu_\epsilon^h,\eta,\phi^h)$ and use the trilinear inequality \eqref{trilinear ineq}
\begin{align*}
    |b^*(\bu_\epsilon^h,\eta,\phi^h)|&=\frac{1}{2}\left|(\bu_\epsilon^h\cdot\nabla\eta,\phi^h)-(\bu_\epsilon^h\cdot\nabla\phi^h,\eta)\right| \\
    &\leq \frac{1}{2}[\|\bu_\epsilon^h\|_{L_6}\|\nabla\eta\|_{L_3}\|\phi^h\|+\|\bu_\epsilon^h\|_{L_6}\|\nabla\phi^h\|\|\eta\|_{L_3}] \\
    &\leq \frac{1}{4}\|\phi^h\|^2+\frac{1}{4}\|\bu_\epsilon^h\|_{L_6}^2\|\nabla\eta\|_{L_3}^2+\frac{\nu}{16}\|\nabla\phi^h\|^2+C(\nu)\|\bu_\epsilon^h\|_{L_6}^2\|\eta\|_{L_3}^2.
\end{align*}
Collect all the terms, combine similar terms and multiply through by 2, we have
\begin{align*}
    \frac{d}{dt}\|\phi^h\|^2+\frac{\nu}{4}\|\nabla\phi^h\|^2+\epsilon(t)\|p_\epsilon^h-\lambda^h\|^2\leq (C(\nu)\|\nabla\bu_\epsilon\|^4+\frac{1}{2})\|\phi^h\|^2\\
    +C(\nu)\left[\|\eta_t\|_{-1}^2+\|\nabla\bu_\epsilon\|^2\|\eta\|_{L_4}^2+\|\bu_\epsilon^h\|_{L_6}^2\|\eta\|_{L_3}^2\right]+\frac{1}{2}\|\bu_\epsilon^h\|_{L_6}^2\|\nabla\eta\|_{L_3}^2+\epsilon(t)\|p_\epsilon-\lambda^h\|^2.
\end{align*}
Denote $a(t)=C(\nu)\|\nabla\bu_\epsilon\|^4+\frac{1}{2}$ and its antiderivative
\begin{equation*}
A(T):=\int_0^T a(t) dt<\infty \ \text{for}\  \|\nabla\bu_\epsilon\|\in L^4(0,T).
\end{equation*}
Multiply through by the integrating factor $e^{-A(t)}$ 
\begin{gather*}
    \frac{d}{dt}[e^{-A(T)}\|\phi^h\|^2]+e^{-A(T)}\Big[\frac{\nu}{4}\|\nabla\phi^h\|^2+\epsilon(t)\|p^h-\lambda^h\|^2\Big] \leq \\
    e^{-A(T)}\Big\{C(\nu)\Big[\|\eta_t\|_{-1}^2+\|\nabla\bu_\epsilon\|^2\|\eta\|_{L_4}^2+\|\bu_\epsilon^h\|_{L_6}^2\|\eta\|_{L_3}^2\Big]+\frac{1}{2}\|\bu_\epsilon^h\|_{L_6}^2\|\nabla\eta\|_{L_3}^2+\epsilon(t)\|p_\epsilon-\lambda^h\|^2\Big\}.
\end{gather*}
Integrate over $[0,T]$ and multiply through by $e^{A(T)}$ gives
\begin{gather*}
    \|\phi^h(T)\|^2+\int_0^T\frac{\nu}{4}\|\nabla\phi^h\|^2+\epsilon(t)\|p^h-\lambda^h\|^2\ dt \leq e^{A(T)}\Big\{\|\phi^h(0)\|^2 \\ +\int_0^TC(\nu)\Big[\|\eta_t\|_{-1}^2+\|\nabla\bu_\epsilon\|^2\|\eta\|_{L_4}^2+\|\bu_\epsilon^h\|_{L_6}^2\|\eta\|_{L_3}^2 \Big]+\frac{1}{2}\|\bu_\epsilon^h\|_{L_6}^2\|\nabla\eta\|_{L_3}^2+\epsilon(t)\|p_\epsilon-\lambda^h\|^2\ dt\Big\}.
\end{gather*}
Applying H\"{o}lder's inequality \eqref{holder+young} gives
\begin{align*}
\int_0^T \|\bu_\epsilon^h\|_{L_6}^2\|\eta\|_{L_3}^2 dt &\leq \|\bu_\epsilon^h\|_{L^6(0,T;L^6)}^2\|\eta\|_{L^3(0,T;L^3)}^2, \\
\int_0^T \|\nabla\bu_\epsilon\|^2\|\eta\|_{L_4}^2 dt&\leq \|\nabla\bu_\epsilon\|_{L^4(0,T;L^2)}^2\|\eta\|_{L^4(0,T;L^4)}^2, \\
\int_0^T \|\bu_\epsilon^h\|_{L_6}^2\|\nabla\eta\|_{L_3}^2 dt &\leq \|\bu_\epsilon^h\|_{L^6(0,T;L^6)}^2\|\nabla\eta\|_{L^3(0,T;L^3)}^2 .
\end{align*}
$\|\bu_\epsilon^h\|_{L^6(0,T;L^6)}$ and $\|\nabla\bu_\epsilon\|_{L^4(0,T;L^2)}$ are bounded by problem data by the stability bound. Using the Sobolev inequality \eqref{sobolev2}, $L^p-L^2$ type inverse inequality \eqref{lp-l2inverse}, the interpolation  estimate \eqref{negative-norm} and the Poincar\'{e}-Friedrichs' inequality \eqref{pf ineq}
\begin{align*}
    &\|\eta\|_{L^3}\leq C\|\eta\|^{1-d/6}\|\nabla \eta\|^{d/6},\quad
    \|\eta\|_{L^4}\leq C\|\eta\|^{1-d/4}\|\nabla\eta\|^{d/4},\\
    &\|\nabla\eta\|_{L^3}\leq Ch^{-d/6}\|\nabla \eta\|,\quad\quad\quad
    \|\eta_t\|_{-1}\leq Ch\|\eta_t\|\leq Ch\|\nabla\eta_t\|.
\end{align*}
By Proposition 4.2 and \eqref{eta-t}
\begin{align*}
    \|\nabla(\bu_\epsilon-\Tilde{\bu})\|^2\leq C(\nu)[\inf_{v^h\in X^h}\|\nabla(\bu_\epsilon-v^h)\|^2+\inf_{q^h\in Q^h}\|p_\epsilon-q^h\|^2],\\
    \|\bu_\epsilon-\Tilde{\bu}\|^2\leq C(\nu,\Omega)h^2[\inf_{v^h\in X^h}\|\nabla(\bu_\epsilon-v^h)\|^2+\inf_{q^h\in Q^h}\|p_\epsilon-q^h\|^2],\\
    \|\nabla(\bu_\epsilon-\Tilde{\bu})_t\|^2\leq C(\nu)[\inf_{v^h\in X^h}\|\nabla(\bu_\epsilon-v^h)_t\|^2+\inf_{q^h\in Q^h}\|(p_\epsilon-q^h)_t\|^2],\\
    \|p_\epsilon-\lambda^h\|^2\leq C(\nu,\beta^h)[\inf_{v^h\in X^h}\|\nabla(\bu_\epsilon-v^h)\|^2+\inf_{q^h\in Q^h}\|p_\epsilon-q^h\|^2].
\end{align*}
Thus,
\begin{gather*}
    \|\phi^h(T)\|^2+\int_0^T\frac{\nu}{4}\|\nabla\phi^h\|^2+\epsilon(t)\|p^h-\lambda^h\|^2\ dt \leq \\
     e^{A(T)}\Big\{\|\phi^h(0)\|^2+\int_0^T C(\nu,\beta^h)\epsilon(t)\left(\inf_{v^h\in X^h}\|\nabla(\bu_\epsilon-v^h)\|^2+\inf_{q^h\in Q^h}\|p_\epsilon-q^h\|^2\right)\ dt \\
    +C(\nu,\Omega)\Big[(h^{-d/3}+h^{2-d/3}+h^{2-d/2})\left(\inf_{v^h\in X^h}\|\nabla(\bu_\epsilon-v^h)\|_{L^2(0,T;L^2)}^2+\inf_{q^h\in Q^h}\|p_\epsilon-q^h\|^2_{L^2(0,T;L^2)}\right)\\
    +h^2\left(\inf_{v^h\in X^h}\|\nabla(\bu_\epsilon-v^h)_t\|^2_{L^2(0,T;L^2)}+\inf_{q^h\in Q^h}\|(p_\epsilon-q^h)_t\|^2_{L^2(0,T;L^2)}\right)\Big]
    \Big\}.
\end{gather*}
Using the approximation properties \eqref{approx-prop} of the spaces $(X^h, Q^h)$
\begin{gather*}
     \|\phi^h(T)\|^2+\int_0^T\frac{\nu}{4}\|\nabla\phi^h\|^2+\epsilon(t)\|p^h-\lambda^h\|^2\ dt \leq \\
     e^{A(T)}\Big\{\|\phi^h(0)\|^2++\int_0^T C(\nu,\beta^h)\epsilon(t)h^{2m}\left(\|\bu_\epsilon\|^2_{H^{m+1}(\Omega)}+\|p_\epsilon\|^2_{H^m(\Omega)}\right)\ dt \\
    +C(\nu,\Omega)\Big[(h^{-d/3}+h^{2-d/3}+h^{2-d/2})h^{2m}\left(\|\bu_\epsilon\|^2_{L^2(0,T;H^{m+1}(\Omega))}+\|p_\epsilon\|^2_{L^2(0,T;H^m(\Omega))}\right)\\
    +h^{2m+2}\left(\|\bu_{\epsilon,t}\|^2_{L^2(0,T;H^{m+1}(\Omega))}+\|p_{\epsilon,t}\|^2_{L^2(0,T;H^m(\Omega))}\right)\Big]
    \Big\}.
\end{gather*}
Drop the pressure term on the left-hand side and apply triangle inequality, then we have the error estimate.
\end{proof}
\section{Algorithms}
The backward Euler method was chosen as our method of time discretization. A time filter to increase the accuracy from first to second order was added \cite{guzel2018timefilter}, and later used to implement time adaptivity easily. 

We use the time discretization in \cite{guzel2018timefilter}: 
for $y'=f(t,y)$, select $\tau=k_{n+1}/k_n$, $\alpha=\tau(1+\tau)/(1+2\tau)$, then
\begin{equation}\label{filter}
\begin{aligned}
    &\frac{y_{n+1}^1-y_n}{k_{n+1}}=f(t_{n+1},y_{n+1}),\\
    &y_{n+1}=y_{n+1}^1-\frac{\alpha}{2}\left(\frac{2k_{n}}{k_n+k_{n+1}}y_{n+1}-2y_n+\frac{2k_{n+1}}{k_n+k_{n+1}}y_{n-1}\right) , \\
    &EST=|y_{n+1}-y^1_{n+1}|.
\end{aligned}
\end{equation}
This step uses the information of the previous two time-steps. 

This above algorithm is second-order accurate for $\alpha=2/3$ with constant time-step $\tau=1$. 
Apply this time filter to our adaptive penalty method; we get the following variable $\epsilon$, constant time-step Algorithm 1.
\begin{algorithm}[h!]
\SetAlgoLined
Given $\bu_n,\bu_{n-1},\epsilon_{n},\epsilon_{n+1}$, tolerance TOL, lower tolerance minTOL,  $\epsilon_{min}, \epsilon_{max}$, and $\alpha.$  \\
Set $\bu^\star=2\bu_n-\bu_{n-1}$
Solve for $\bu_{n+1}^1$ 
\begin{align*}
    \frac{\bu_{n+1}^1-\bu_n}{k}+\bu^*\cdot\nabla\bu_{n+1}^1+\frac{1}{2}(\nabla\cdot\bu^*)\bu_{n+1}^1-\nabla\left(\frac{1}{\epsilon_{n+1}}\nabla\cdot\bu_{n+1}^1\right)-\nu\Delta\bu_{n+1}^1=f_{n+1}.
\end{align*}
Apply time filter, Compute estimator EST 
\begin{align*}
    \bu_{n+1}=\bu_{n+1}^1-\frac{1}{3}\{\bu_{n+1}^1-2\bu_n+\bu_{n-1}\}, \\
    EST_{n+1}=\|\nabla\cdot \bu_{n+1}\|/\|\nabla\bu_{n+1}\|. \\
\end{align*}
Adapt $\epsilon$ using the standard decision tree:\\
\If {$EST_{n+1} \geq TOL$} {
    \eIf{$\epsilon_{n+1}=\epsilon_{min}$}
    {CONTINUE}
    {$\epsilon_{n+1} \gets \max\{(1-\alpha k)\epsilon_{n+1},  0.5\epsilon_{n+1},\epsilon_{min}\}$ \;
    REPEAT step}}
\If{$EST_{n+1}\leq minTOL$}{
        $\epsilon_{n+2} \gets \min\{2\epsilon_{n+1},\epsilon_{max}\}$ \;
        CONTINUE \;
    }
Recover pressure $p_{n+1}$ if needed by: 
$
     p_{n+1}=-\frac{1}{\epsilon_{n+1}}\nabla\cdot\bu_{n+1}.
$
\caption{Variable $\epsilon$, constant time-step, second-order penalty method}
\end{algorithm}


 Next, we extend the algorithm to variable time-step methods based on the previous work by Guzel and Layton \cite{guzel2018timefilter} and Layton and McLaughlin \cite{layton2019doublyadaptive}. We summarize as follows.
In the variable time-step, the first-order and second-order method, the next time-step is adapted based on the following: 
\begin{align*}
    \text{first-order prediction}\ k_{new}=k_{old}\left(\frac{tTOL}{tEST_1}\right)^{1/2},\\
    \text{second-order prediction}\  k_{new}=k_{old}\left(\frac{tTOL}{tEST_2}\right)^{1/3}.
\end{align*}
Let $D_2$ denote the difference
\begin{equation*}
    D_2(n+1)=\frac{2k_n}{k_n+k_{n+1}}\bu_{n+1}^1-2\bu_n+\frac{2k_{n+1}}{k_n+k_{n+1}}\bu_{n-1}.
\end{equation*}
A simple estimate of the local truncation error in the first-order estimation is taken to be the difference between $\bu_{n+1}$ and $\bu_{n+1}^1$
\begin{align*}
    \alpha_1=\frac{\tau(1+\tau)}{1+2\tau},\\
    tEST_1=\|\bu_{n+1}-\bu_{n+1}^1\|=\frac{\alpha_1}{2}\|D_2(n+1)\|.
\end{align*}
And the local truncation error of the second-order method is given by
\begin{align*}
    \alpha_2=\frac{\tau_n(\tau_{n+1}\tau_n+\tau_n+1)(4\tau_{n+1}^3+5\tau_{n+1}^2+\tau_{n+1})}{3(\tau_n\tau_{n+1}^2+4\tau_n\tau_{n+1}+2\tau_{n+1}+\tau_n+1)},\\
    tEST_2=\frac{\alpha_2}{6}\left\|\frac{3k_{n-1}}{k_{n+1}+k_n+k_{n-1}}D_2(n+1)-\frac{3k_{n+1}}{k_{n+1}+k_n+k_{n-1}}D_2(n)\right\|.
\end{align*}
For both first-order and second-order variable time-step methods, $\epsilon$ is still adapted independently  using the same decision tree as in Algorithm 1.
\begin{remark}
 The estimator $\|\nabla\cdot\bu_{n+1}\|/\|\nabla\bu_{n+1}\|$ is chosen over $\|\nabla\cdot\bu_{n+1}\|$ as it is dimension free and removes dependence on the size of u.
\end{remark}

Next, we consider the variable time-step variable order method. This algorithm computes two velocity approximations. $\bu^1$ is first-order, and $\bu$ is second-order by applying the time filter. 
The first-order variable time-step method, is unconditionally stable, while the second-order variable time-step method is A-stable, which would require a time-step condition for stability. Combining both first and second order methods by adapting the method order increases accuracy and efficiency.

This following Algorithm 2 gives the variable $\epsilon$, variable time-step variable order (VSVO) penalty method. First (n=1) and second (n=2) order variable time-step method can be also obtained from this following algorithm by using corresponding time-step estimator $tEST_n$ and time-step $STEPn$. In order to use first-order method, $\bu_{n+1}^1$ is used and for second order method, $\bu_{n+1}$ is used instead. For detailed variable $\epsilon$, variable time-step, first and second order algorithms, see the Appendix.

\begin{algorithm}[h!]
\SetAlgoLined
Given $\bu_n,\bu_{n-1},\epsilon_{n+1},\epsilon_n$, tolerance for $\epsilon$: TOL=$10^{-6}$ and lower tolerance minTOL=TOL/10, lower and upper bound of $\epsilon: \epsilon_{min}=10^{-8},\epsilon_{max}=10^{-5}, \alpha=2$, tolerance for $\Delta t$: tTOL=$10^{-5}$ and lower tolerance mintTOL=tTOL/10  \\
\textit{Compute} $\tau=\frac{k_{n+1}}{k_n}$ and $\alpha_1=\frac{\tau(1+\tau)}{1+2\tau},\alpha_2=\frac{\tau_n(\tau_{n+1}\tau_n+\tau_n+1)(4\tau_{n+1}^3+5\tau_{n+1}^2+\tau_{n+1})}{3(\tau_n\tau_{n+1}^2+4\tau_n\tau_{n+1}+2\tau_{n+1}+\tau_n+1)}$ \\
Set $\bu^*=(1+\tau)\bu_n-\tau\bu_{n-1}$ \\
Solve for $\bu_{n+1}^1$ \\
\begin{align*}
    \frac{\bu_{n+1}^1-\bu_n}{k_{n+1}}+\bu^*\cdot\nabla\bu_{n+1}^1+\frac{1}{2}(\nabla\cdot\bu^*)\bu_{n+1}^1-\nabla\left(\frac{1}{\epsilon_{n+1}}\nabla\cdot\bu_{n+1}^1\right)-\nu\Delta\bu_{n+1}^1=f_{n+1}.
\end{align*}
Compute estimators for $\Dt t$ and $\epsilon$ and difference $D_2$ and apply time filter
\begin{align*}
    D_2(n+1)&=\frac{2k_n}{k_n+k_{n+1}}\bu_{n+1}^1-2\bu_n+\frac{2k_{n+1}}{k_n+k_{n+1}}\bu_{n-1}, \\
    \bu_{n+1}&=\bu_{n+1}^1-\frac{\alpha_1}{2}D_2(n+1), \\
    EST_e(n+1)&=\|\nabla\cdot \bu_{n+1}\|/\|\nabla\bu_{n+1}\|, \\
    tEST_1(n+1)&=\frac{\alpha_1}{2}\|D_2(n+1)\|, \\
    tEST_2(n+1)&=\frac{\alpha_2}{6}\left\|\frac{3k_{n-1}}{k_{n+1}+k_n+k_{n-1}}D_2(n+1)-\frac{3k_{n+1}}{k_{n+1}+k_n+k_{n-1}}D_2(n)\right\|.
\end{align*}
Adapt $\epsilon$ and $k$ using the standard decision tree:\\
\eIf {$EST_e(n+1) > TOL$ or $\min\{tEST_1(n+1),tEST_2(n+1)\} > tTOL$} {
    $
    \epsilon_{n+1} \gets \max\{(1-\alpha k_{n+1})\epsilon_{n+1},  0.5\epsilon_{n+1},\epsilon_{min}\} 
    $\;
    $
    STEP1=\max\left\{0.9k_n\left(\frac{tTOL}{tEST_1(n+1)}\right)^{1/2},0.5k_{n+1}\right\} 
    $\;
    $
    STEP2=\max\left\{0.9k_n\left(\frac{tTOL}{tEST_2(n+1)}\right)^{1/3},0.5k_{n+1}\right\} 
    $\;
    $
    k_{n+1} \gets \max\{STEP1,STEP2\} 
    $ \;
    REPEAT step \\
    }{
    \If {$EST_{n+1} < minTOL$ or $\min\{tEST_1(n+1),tEST_2(n+1)\} < min tTOL$}{
        $\epsilon_{n+2} \gets \min\{2\epsilon_{n+1},\epsilon_{max}\}$ \;
        $
        STEP1 \gets \max\left\{\min\left\{0.9k_{n+1}\left(\frac{tTOL}{tEST_1(n+1)}\right)^{1/2},2k_{n+1}\right\},0.5k_{n+1}\right\}
        $\;
        $
        STEP2 \gets \max\left\{\min\left\{0.9k_{n+1}\left(\frac{tTOL}{tEST_2(n+1)}\right)^{1/3},2k_{n+1}\right\},0.5k_{n+1}\right\}
        $ \;
        $
        k_{n+2} \gets \max\{STEP1,STEP2\}
        $\;
        CONTINUE \;
        }
    }
Pick method with larger time-step for next step:\\
\If {STEP1$>$STEP2}{
    $
    \bu_{n+1}=\bu_{n+1}^1
    $\\
}
Recover pressure $p_{n+1}$ if needed by: 
$
     p_{n+1}=-\frac{1}{\epsilon_{n+1}}\nabla\cdot\bu_{n+1}.
$
\caption{Variable $\epsilon$, variable time-step, variable order (VSVO) penalty method}
\end{algorithm}
\begin{remark}
In this algorithm, 0.9 is used as a standard safety factor \cite{layton2019doublyadaptive}.
\end{remark}
\section{Numerical Experiments}

\subsection{Modified Tayler-Green}
First we verify the adaptive $\epsilon$ penalty method does better than normal constant $\epsilon$ penalty method by comparing the adaptive $\epsilon$ tests (Algorithm 1) with two different constant $\epsilon$ options: 1) constant $\epsilon$ = $10^{-8}\nu$ and 2) constant $\epsilon = k$. Here option 1) is usually the approach using by engineering people and option 2) is derived from previous penalty paper by Shen \cite{Shen1995penalty}.

This test is also a modified version of the historically used problem Tayler-Green vortex \cite{victor2018time}. The exact solution is given by
\begin{align*}
    \bu(x,y,t)&=e^{-2\nu t}(\cos x\sin y,-\sin x\cos y),\\
    p(x,y,t)&=-\frac{1}{4}e^{-4\nu t}(\cos 2x+\cos 2y)+x(\sin 2t+\cos 3t)+y(\sin 3t+\cos 2t).
\end{align*}
This is inserted into the NSE and the body force $f(x,y,t)$ calculated.

The test was done using uniform meshes with 100 nodes per side of the square $[0, 2\pi]\times[0,2\pi]$. We solve by using $P_2$ elements and calculate up to time $T=25$. Here we still compare three different methods: 1) constant epsilon penalty method with $\epsilon=10^{-8}\nu$, 2) constant epsilon penalty method with $\epsilon=k$ and 3) variable penalty method (Algorithm 1). All three methods are calculated with $\Delta t=0.005$ with time-filter. The results are shown in \Cref{fig:quasi_divu_ut_plot}, \Cref{fig:quasi_divu_ut_zoomed_plot}, \Cref{fig:quasi_eps_uerr_plot} and \Cref{fig:quasi_perr_plot}.  

\Cref{fig:quasi_divu_ut_plot} shows the evolution of $\|\nabla\cdot u\|$ and discrete $\|u_t\|$. Constant penalty $\epsilon=k$ has much more larger $\|\nabla\cdot u\|$ and $\|u_t\|$ than both constant penalty $\epsilon=10^{-8}\nu$ and adaptive penalty methods. The results of constant $\epsilon=k$ is inaccurate as $\|\nabla\cdot u\|=\mathcal{O}(10^{-1})$.
\begin{figure}[h!]
    \centering
    \includegraphics[width=0.8\textwidth]{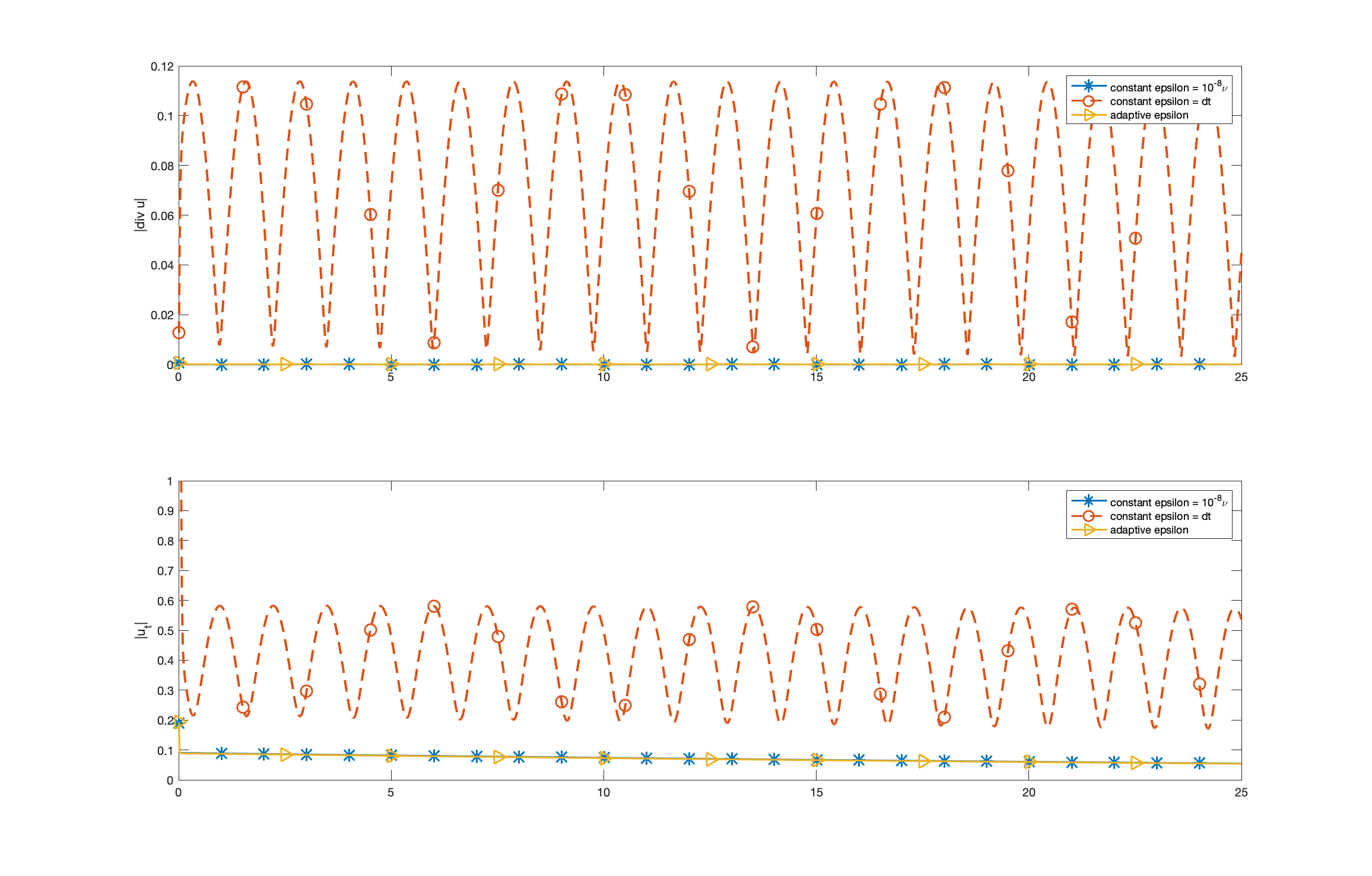}
    \caption{Comparison of $\|\nabla\cdot u\|$ and discrete $\|u_t\|$ between adaptive penalty (Algorithm 1) and two constant penalty methods, tests are done with 100 mesh points per side and $\Delta t=0.005$.}
    \label{fig:quasi_divu_ut_plot}
\end{figure}
To further see the difference between constant penalty $\epsilon=10^{-8}\nu$ and adaptive penalty, we zoomed in and got \Cref{fig:quasi_divu_ut_zoomed_plot}. Constant $\epsilon=10^{-8}\nu$ and adaptive penalty has comparable $\|u_t\|$ values, both of order $\mathcal{O}(10^{-2})$. But adaptive penalty has more oscillating $\|\nabla\cdot u\|$ values than constant penalty $\epsilon=10^{-8}\nu$. $\|\nabla\cdot u\|$ of adaptive penalty is $\mathcal{O}(10^{-5})$ and $\|\nabla\cdot u\|$ of constant penalty $\epsilon=10^{-8}\nu$ is $\mathcal{O}(10^{-9})$.
\begin{figure}[h!]
    \centering
    \includegraphics[width=0.8\textwidth]{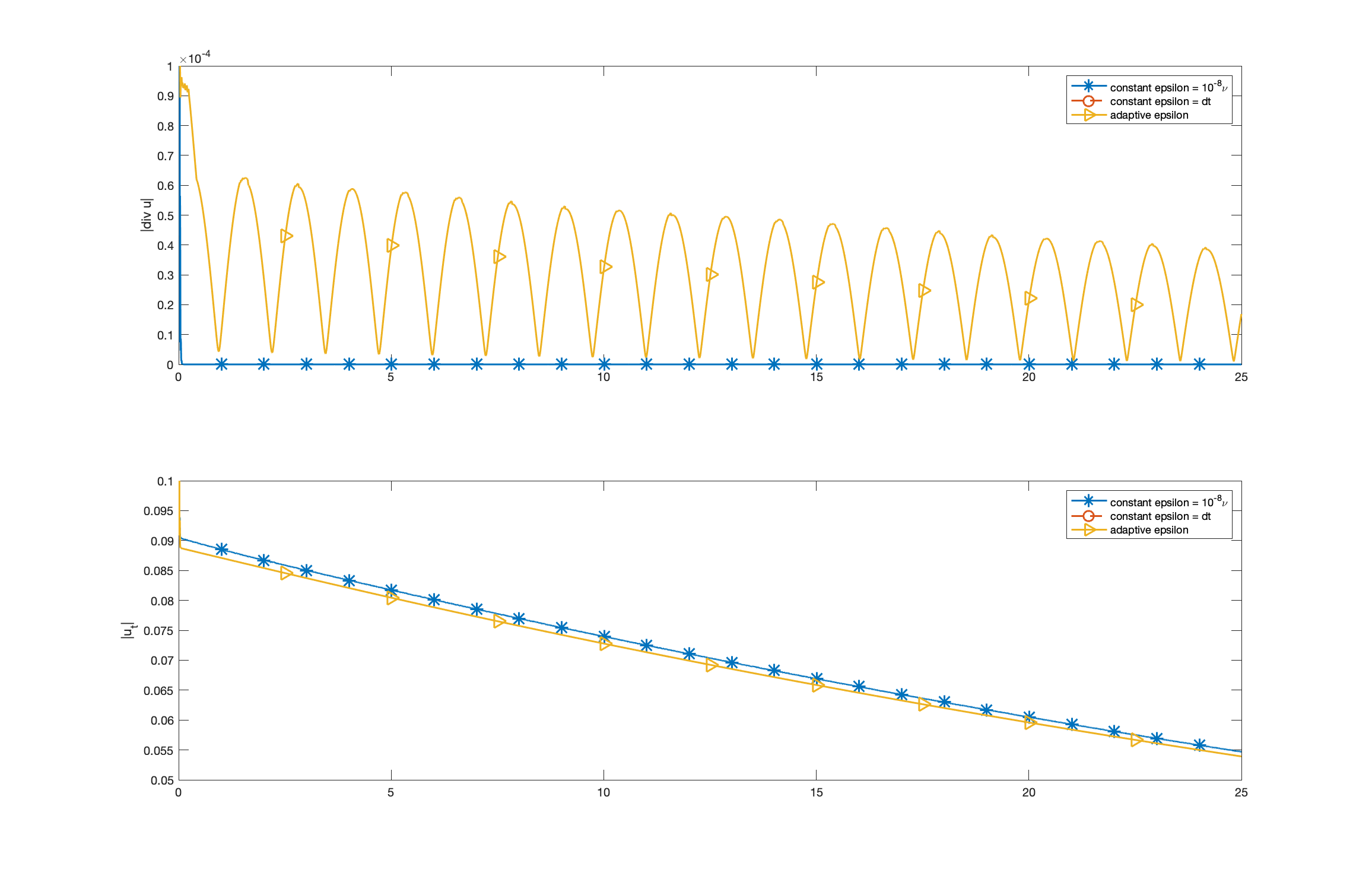}
    \caption{Zoomed in comparison of $\|\nabla\cdot u\|$ and discrete $\|u_t\|$ between adaptive penalty (Algorithm 1) and two constant penalty methods, tests are done with 100 mesh points per side and $\Delta t=0.005$.}
    \label{fig:quasi_divu_ut_zoomed_plot}
\end{figure}

The second plot of \Cref{fig:quasi_eps_uerr_plot} is the velocity error $\|u-u^h\|$, adaptive $\epsilon$ has a much more smaller error compared to the other two constant penalty method. The first plot of \Cref{fig:quasi_eps_uerr_plot} is the evolution of $\epsilon$ of 1) constant $\epsilon=10^{-8}\nu$ and 3) adaptive penalty. The evolution of $\epsilon$ of 2) constant $\epsilon=k$ is not shown in this plot due to the limitation of $y-$axis. $\epsilon$ of 3) adaptive penalty changes with time and gradually becomes stable over time. Because the penalty method is very sensitive to the choice of $\epsilon$ as we see in \Cref{fig:error_epsilon}. This shows that adaptive penalty method does pick a good $\epsilon$ automatically. Because penalty method is very sensitive to the choice of $\epsilon$ as we see in \Cref{fig:error_epsilon}, a good choice of $\epsilon$ is not easy at the beginning. Furthermore, by using the adaptive penalty method, we could eventually find that good $\epsilon$ with a little more calculation.
\begin{figure}[h!]
    \centering
    \includegraphics[width=0.8\textwidth]{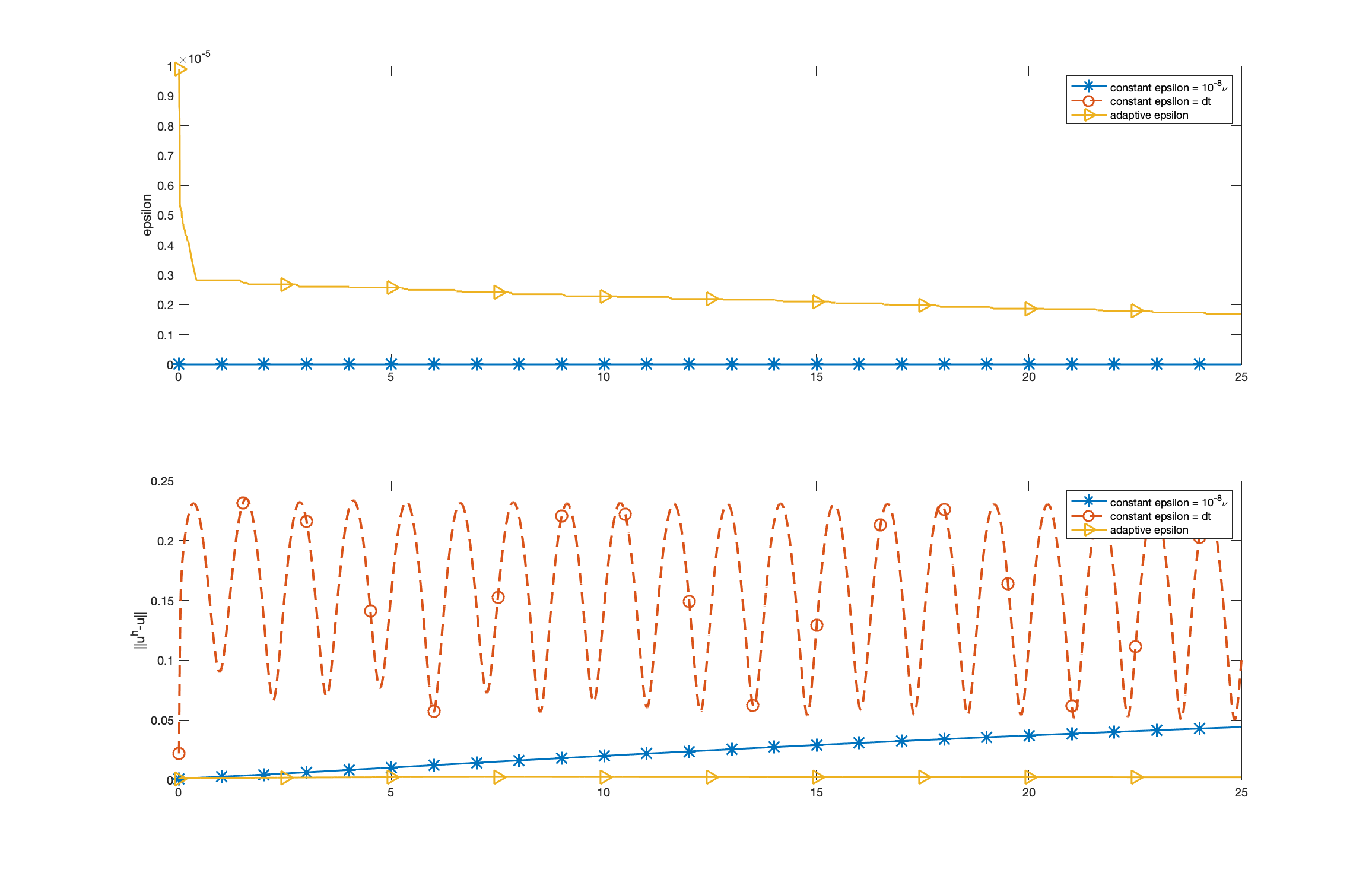}
    \caption{Evolution of $\epsilon$ and $\|u-u^h\|$ between adaptive penalty (Algorithm 1) and two constant penalty methods, tests are done with 100 mesh points per side and $\Delta t=0.005$.}
    \label{fig:quasi_eps_uerr_plot}
\end{figure}

The behavior of pressure is not good as seen in \Cref{fig:quasi_perr_plot}. Both pressure and pressure error fluctuate. Accurate pressure recovery remains an open question.
\begin{figure}[h!]
    \centering
    \includegraphics[width=0.8\textwidth]{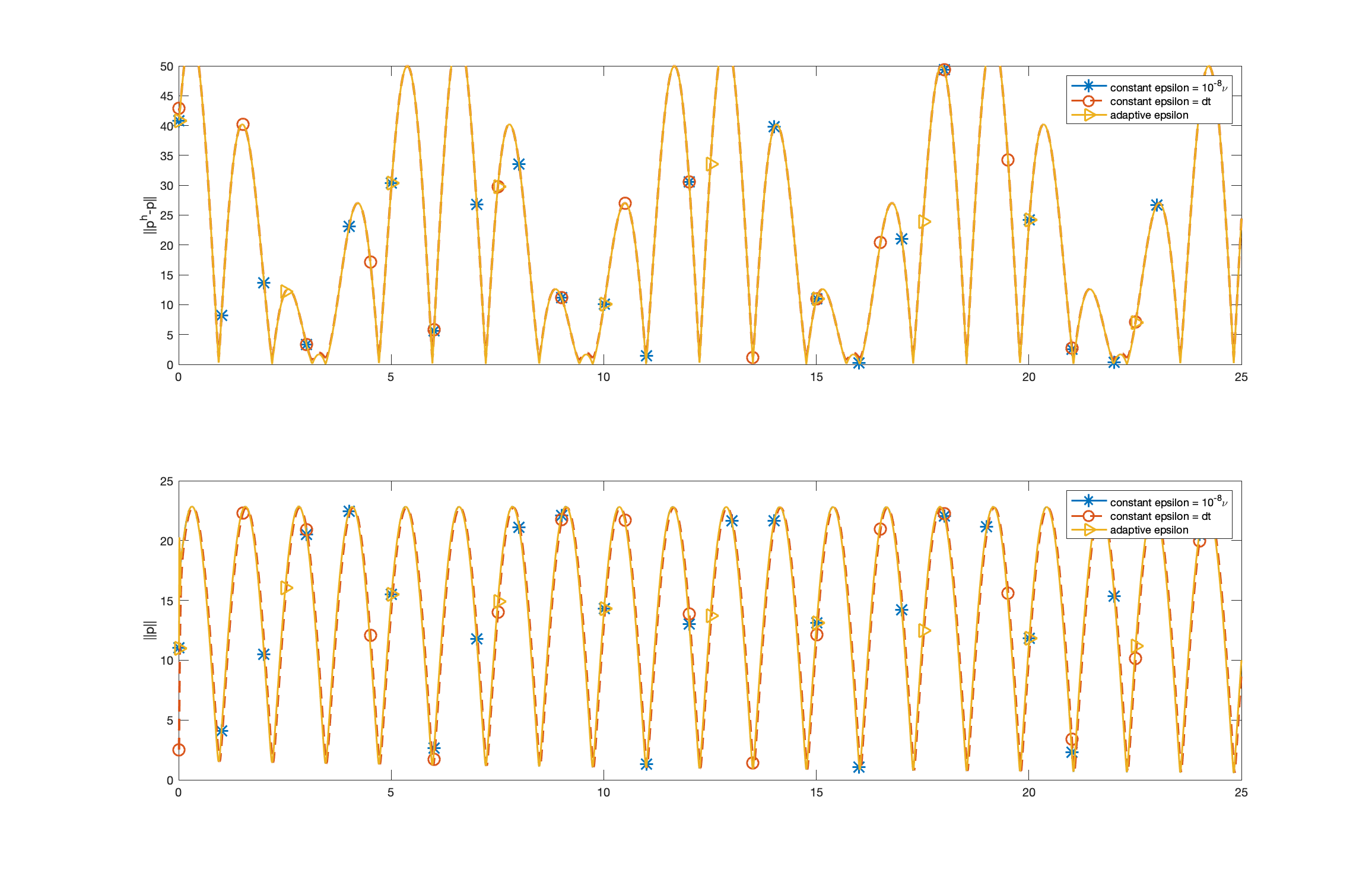}
    \caption{Comparison of $\|p-p^h\|$ and discrete $\|p^h\|$ between adaptive penalty (Algorithm 1) and two constant penalty methods, tests are done with 100 mesh points per side and $\Delta t=0.005$.}
    \label{fig:quasi_perr_plot}
\end{figure}

As a conclusion, there are three main advantages of the adaptive penalty method over the usual constant penalty method:
\begin{enumerate}
    \item The errors of adaptive $\epsilon$ and constant $\epsilon$ are comparable. Finding a good constant $\epsilon$ can require an exhaustive search.
    \item Constant $\epsilon=10^{-8}\nu$ behaviors better than constant $\epsilon=k$ in our tests. But as in the test, $\nu=0.01, \epsilon=10^{-8}\nu=10^{-10}$ leads to an extremely ill conditioned linear system. While adaptive $\epsilon$ levels out with $\epsilon\approx 10^{-5}$ and gives $\|\nabla\cdot u\|=\mathcal{O}(10^{-5})$. Adaptive $\epsilon$ controls $\|\nabla\cdot u\|$ better than $\epsilon=k$ and controls $\|\nabla\cdot u\|$ almost as well as $\epsilon=10^{-8}\nu$ but leads to a much better conditioned system. Further, adaptive $\epsilon$ has smaller velocity error than $\epsilon=10^{-8}\nu$. Overall, adaptive $\epsilon$ performed better.
    \item The only way to find the best $\epsilon$ is by exhaustive search for problems with already known solution. This is not possible for new problems but is not needed with the adaptive penalty.
\end{enumerate}

\subsection{A test with exact solution, taken from \cite{Victor2017artificial}}\label{exact-soln}
This exact solution experiment tests the accuracy of the adaptive penalty algorithm. 
The following test has exact solution for 2D Navier Stokes problem($\nu=1$). \\
Let the domain $\Omega=(-1,1)\times(-1,1)$. The exact solution is as follows:
\begin{align*}
    &\bu(x,y,t)=\pi\sin t(\sin 2\pi y\sin ^2 \pi x,-\sin 2\pi x\sin^2\pi y) \\
    &p(x,y,t)=\sin t\cos \pi x\sin\pi y
\end{align*}
This is inserted into the NSE and the body force $f(x,t)$ calculated. 

Uniform meshes were used with 270 nodes per side on the boundary. The mesh is fine enough that the error resulting from the meshsize is relatively smaller than that from the step-size. 
 Taylor-Hood elements (P2-P1) were used in this test. We ran the test up to $T=10.$ 
\begin{figure}[h!]
  \subfigure[$\Dt t=0.05$, minimum error occurs at $\epsilon=10^{-6}$]{
	\begin{minipage}[c][1\width]{
	   0.45\textwidth}
	   \centering
	   \includegraphics[width=1\textwidth]{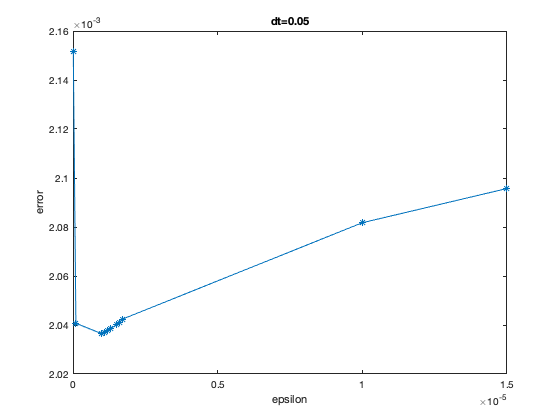}
	\end{minipage}
	}
 \hfill 	
  \subfigure[$\Dt t=0.02$, minimum error occurs at $\epsilon=5*10^{-9}$]{
	\begin{minipage}[c][1\width]{
	   0.45\textwidth}
	   \centering
	   \includegraphics[width=1\textwidth]{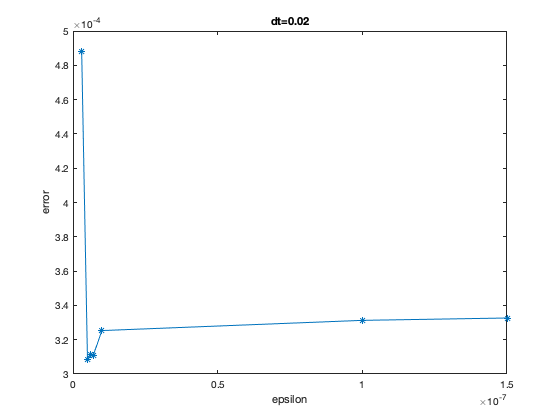}
	\end{minipage}
	}
\caption{$\|(\bu-\bu^h)(10)\|$ with constant time-step and different values of penalty parameter $\epsilon$} 
\label{fig:error_epsilon}
\end{figure}

\Cref{fig:error_epsilon} indicates that velocity error is very sensitive to the choice of $\epsilon$. This sensitivity is a known effect, motivating the $\epsilon-$adaptive algorithm of the penalized NSE \eqref{pnse_eliminate}. It also suggests that $\epsilon$ too large is safer than $\epsilon$ too small.

\subsubsection{Test 1: Constant time-step, variable $\epsilon$ test}
First, we tested the constant time-step, variable $\epsilon$ test based on Algorithm 1. The error at final time T=10 is at \Cref{constant_error}. We observe that the velocity error are good, but the pressure approximation is poor. 
\begin{table}[H]
    \centering
    \begin{tabular}{||c|c|c|c|c|c|c|c||}
    \hline
     dt   &\# steps & $\|(u-u^h)(10)\|$ &rate& $\|(u-u^h)(10)\|_{L^\infty}$ &rate& $\|(p-p^h)(10)\|_{L^\infty}$&rate \\
     \hline
     0.1  &100  & 0.00965699&- &0.00869&- &0.268895&- \\
     \hline
     0.05  &200& 0.00203366 &2.2475& 0.002075&2.0662&0.229891&0.2261 \\
     \hline
     0.02 &500& 0.000332169 &1.9775& 0.0004969&1.5599&0.222597&0.0352\\
     \hline
     0.01 &1000& 0.000324625 &0.0331&0.00043955 &0.1769&0.196683&0.1786 \\
     \hline
    \end{tabular}
    \caption{Constant time-step variable $\epsilon$ error comparison}
    \label{constant_error}
\end{table}
\begin{figure}[h!]
    \centering
    \includegraphics[width=\textwidth,height=9cm]{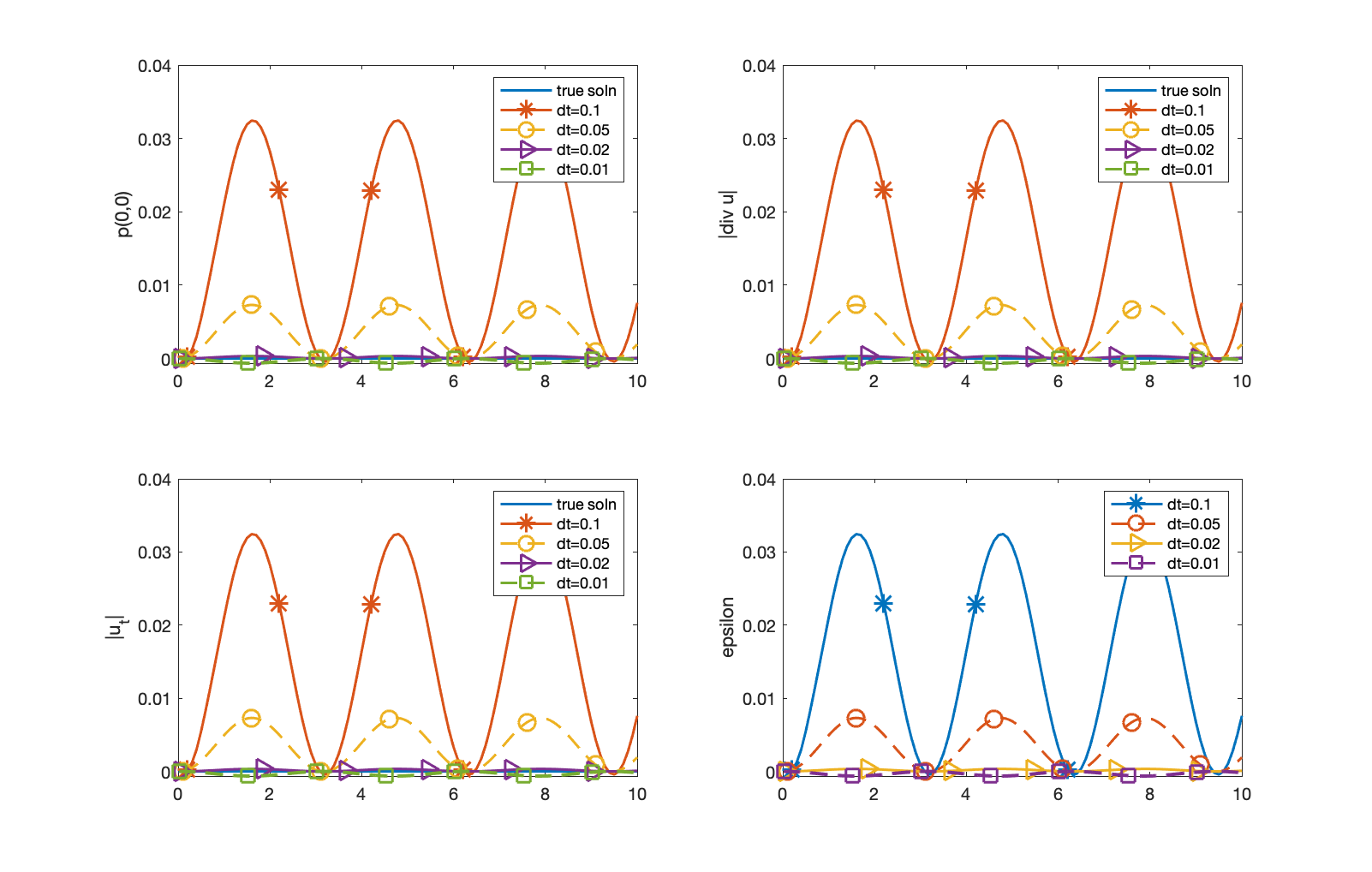}
    \caption{Comparison of results with different $\Dt t$ of variable $\epsilon$, constant time-step method (Algorithm 1)}
    \label{fig:compare of alg1}
\end{figure}
$\|\nabla\cdot\bu\|$ in \Cref{fig:compare of alg1} is well controlled for all three different time-steps. $\|\bu_t\|$ in \Cref{fig:compare of alg1} is very close to the true value of problem. Decreasing the time-step improves accuracy of the pressure at origin $(0,0)$. The oscillations in the errors of $p(0,0)$ and $\|\nabla\cdot\bu\|$ arise from the multiplier $\sin(t)$ in the exact solution.

This test has a smooth solution, $\|\nabla\bu\|$ does not vary too much in the whole test. And this result in the estimator $EST=\|\nabla\cdot\bu\|/\|\nabla\bu\|$ is also very smooth. So $\epsilon$ does not vary too much in this test
. Both $\|\bu_t\|$ and $\|\nabla\cdot\bu\|$ are well controlled. The values of $\|\bu_t\|$ are very close to the true value. The values of $\|\nabla\cdot\bu\|$ are very close to 0 (up to $10^{-4}$) which is the incompressibility condition. One surprising effect we see from the plot of $\|\nabla\cdot\bu\|$ is that smaller
 $\Dt t$ leads to larger $\|\nabla\cdot\bu\|$, which contradicts expectations from theory. This is due to for smaller $\Dt t$, this adaptive $\epsilon$ algorithm suggests larger $\epsilon$. $\nabla\cdot\bu$ and $p$ need to satisfy the relation $\nabla\cdot\bu+\epsilon p=0$ and this implies $\|\nabla\cdot\bu\|=\epsilon\|p\|$. 
\begin{figure}[h!]
    \centering
    \includegraphics[width=0.6\textwidth]{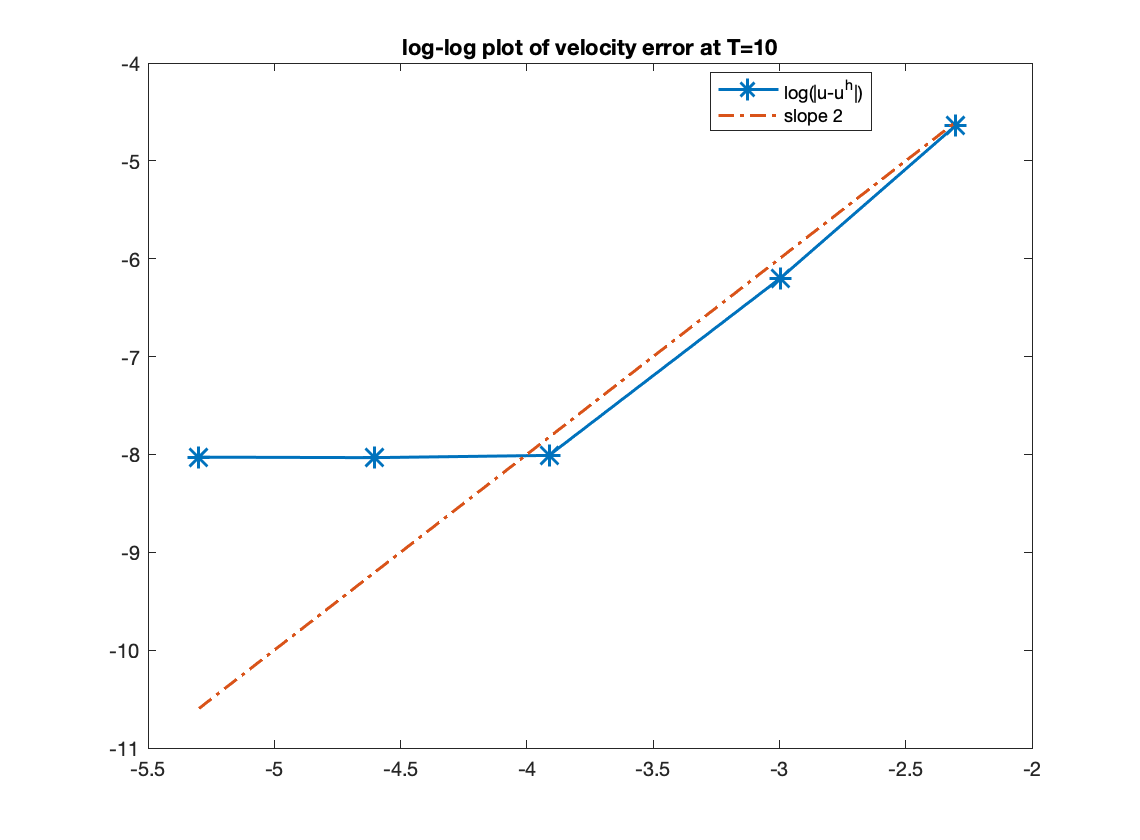}
    \caption{log-log plot of velocity error at final step T=10 $\|(\bu-\bu^h)(10)\|$ v.s. time-step using variable $\epsilon$ constant time-step method (Algorithm 1). Slope of plot $\|(u-u^h)(10)\|$ is close to 2.}
    \label{fig:log-log}
\end{figure}

\Cref{fig:log-log} is the log-log plot of velocity error at final time T=10 versus the time-step k. We see the curve of $\log(\|(u-u^h)(10)\|)-\log(k)$ has slope close to 2. This constant time-step, variable $\epsilon$, backward Euler algorithm with time filter (Algorithm 1) is second-order accurate. When the time-step gets too small, the error does not change too much. This is due to the choice of tolerance, TOL, for algorithm here is $10^{-6}$ and at this time-step reached the error plateau.

\subsubsection{Test 2: Double Adaptive}
Next, we test the same problem using the variable time-step algorithm (Algorithm 2-4). The errors of variable time-step, variable $\epsilon$ method are presented at \Cref{variable_error}. From the table, the variable order method gives slightly better results than first-order and second-order methods. The velocity error is of order  $10^{-3}$ using VSVO and is of order $10^{-2}$ for both first and second order algorithm. The pressure error of VSVO is approximately  50\% smaller than first and second order algorithms. 
\begin{table}[h!]
    \centering
    \begin{tabular}{||c|c|c|c|c||}
    \hline
      method   & $\#$ steps& $\|(u-u^h)(10)\|$ & $\|(u-u^h)(10)\|_{L^\infty}$ & $\|(p-p^h)(10)\|_{L^\infty}$ \\
      \hline
       first &3450&0.0609278&0.0512269 &0.348461 \\
       \hline
       second & 447 &0.0567343 &0.0476828 &0.344317 \\
       \hline
       vsvo &566&0.00364638 &0.00314834 &0.190205 \\
       \hline
    \end{tabular}
    \caption{Variable time-step error comparison}
    \label{variable_error}
\end{table}
\begin{figure}[H]
    \centering
    \includegraphics[width=\textwidth,height=10cm]{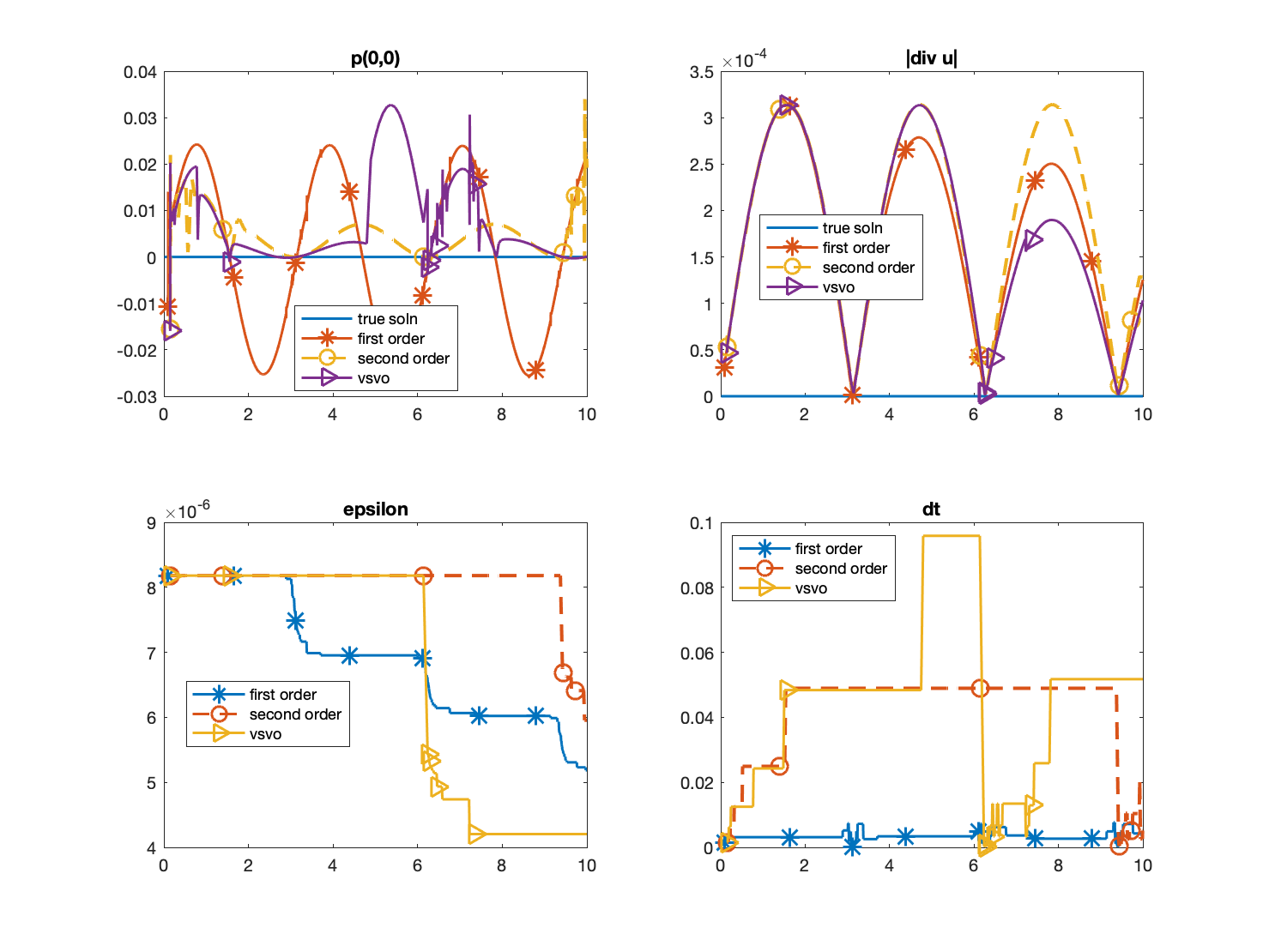}
    \caption{Comparison of variable time-step, variable $\epsilon$ method (Algorithm 2,3,4)}
    \label{fig:variable-step}
\end{figure}
For the variable time-step methods, in  \Cref{fig:variable-step} we track the evolution of $\epsilon$ and $\Dt t$, the pressure at the origin and $\|\nabla\cdot u\|$. The last plot of  \Cref{fig:variable-step} shows that the second-order method consistently chooses a larger time-step than the first-order method. At the beginning, VSVO picks the second-order method, and after some time VSVO picks the first-order method. The VSVO algorithm takes larger time-steps than both the first and second order method. 
\begin{figure}[h!]
    \centering
    \includegraphics[width=0.7\textwidth]{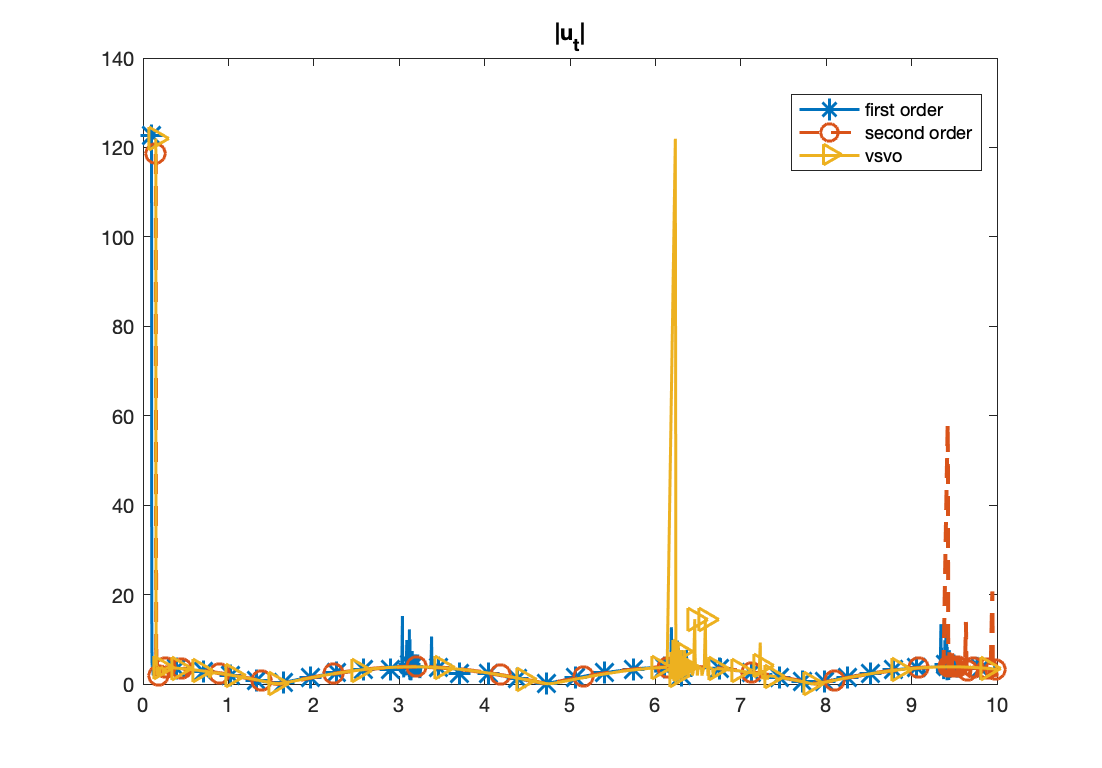}
    \caption{$\|u_t\|$ plot of variable time-step, variable $\epsilon$ method (Algorithm 2,3,4) The spikes show at the time when $\epsilon$ decrease too fast (violation of  \eqref{epsilon-restriction})}
    \label{fig:ut_spike}
\end{figure}
From \Cref{fig:ut_spike}, the plot of $\|\bu_t\|$, we see some spikes. The time where we see spikes is exactly $\epsilon$ is decreased a lot (see \Cref{fig:variable-step}.) From the analysis of stability of $\|\bu_t\|$, when we decrease $\epsilon$, \eqref{epsilon-restriction} must be satisfied to avoid catastrophic growth of $\|\bu_t\|$. When adapting $\epsilon$ and $k$ to ensure $EST<TOL$, we may reject due to $EST$ exceeding $TOL$ and redo the step several times. This may result in the sudden decrease of $\epsilon$, as in \Cref{fig:variable-step}. This kind of sudden decrease of $\epsilon$ violates \eqref{epsilon-restriction} and results in the spikes as in \Cref{fig:ut_spike}. This illustrates the necessity of controlling the change in $\epsilon$ using the method we derived from stability analysis and decreasing $\epsilon$ using \eqref{epsilon-restriction}: $(1-k\alpha)\epsilon_n \leq \epsilon_{n+1}$.

\subsection{Flow Between Offset Circles, taken from \cite{layton2019doublyadaptive}}\label{fbc}
The domain is a disk with a smaller off center obstacle inside. Let $r_1=1,r_2=0.1,c=(c_1,c_2)=(1/2,0),$ then the domain is given by
\beas
\Omega=\{(x,y):x^2+y^2< r_1^2 \;\text{and}\; (x-c_1)^2+(y-c_2)^2> r_2^2\}.
\eeas
The flow is driven by a counterclockwise rotational body force
\beas
f(x,y,t)=\min\{t,1\}(-4y*(1-x^2-y^2),4x*(1-x^2-y^2))^T,\;\;\;\text{for}\; 0\leq t\leq 10,
\eeas
with no-slip boundary conditions on both circles. We discretize in space using $P^2-P^1$ Taylor-Hood elements. There are 200 mesh points around the outer circle and 50 mesh points around the inner circle. The finite element discretization has a maximal mesh width of $h_{max}=0.048686$. The flow is driven by a counterclockwise force (f=0 on the outer circle). The flow rotates about the origin and interacts with the immersed circle.  

To better compare the results, tests is also done using the following algorithm: \\
Backward Euler with grad-div stabilization parameter $\gamma=1$ see Jenkins, John, Linke and Rebholz \cite{graddiv-jenkins2014parameter}
\begin{equation}\label{grad-div-stab}
\begin{aligned}
    \frac{\bu^{n+1}-\bu^n}{k}+\bu^n\cdot\nabla\bu^{n+1}+\frac{1}{2}(\nabla\cdot\bu^n)\bu^{n+1}+\nabla p^{n+1}-\nu\Delta\bu^{n+1}-\gamma\nabla\nabla\cdot\bu^{n+1}=f_{n+1} ,\\
    \nabla\cdot\bu^{n+1}=0 .
\end{aligned}
\end{equation}
\begin{figure}[h!]
    \centering
    \includegraphics[width=\textwidth,height=13cm]{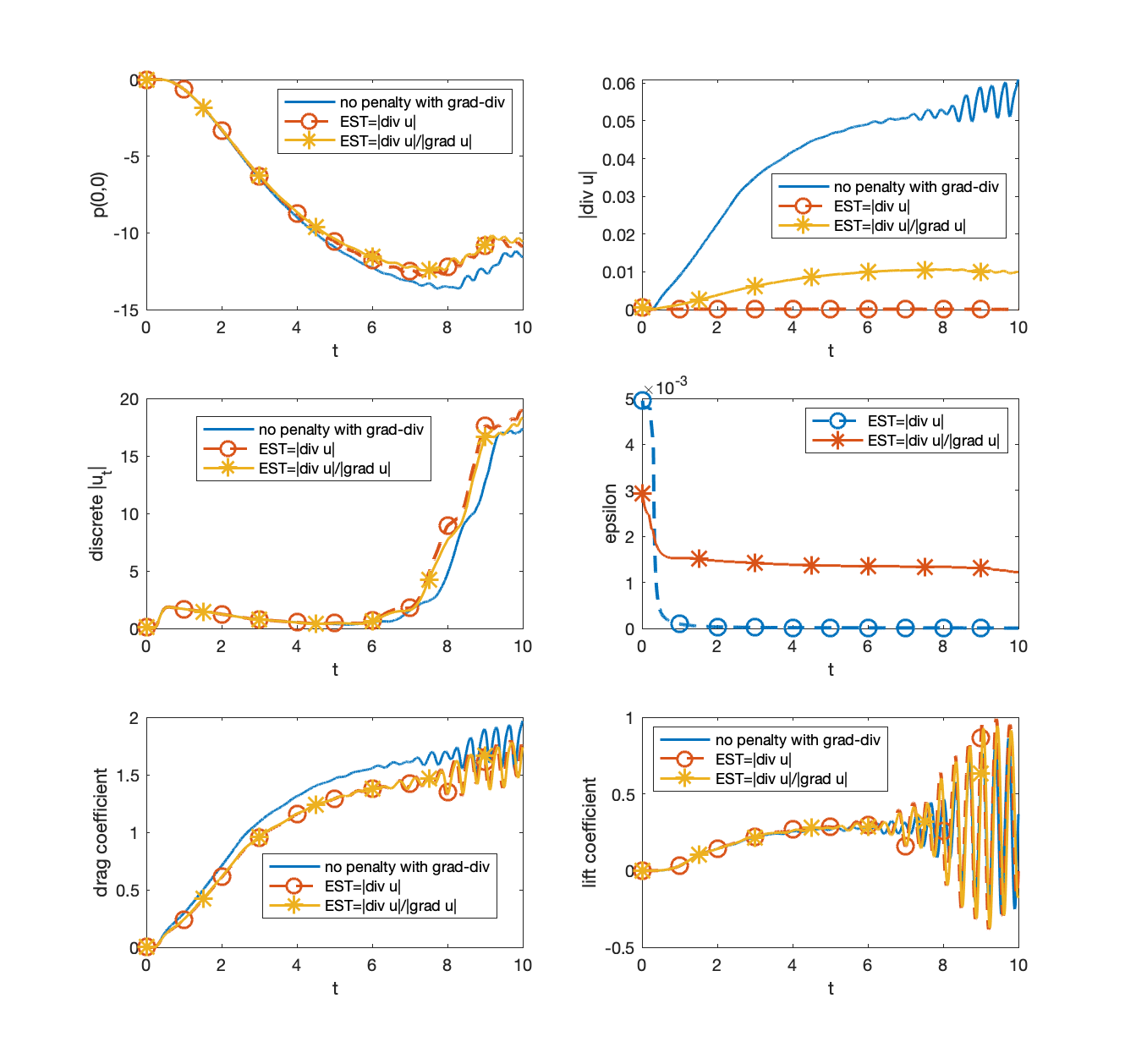}
    \caption{Comparison between different estimators $\|\nabla\cdot\bu\|$ and $\|\nabla\cdot\bu\|/\|\nabla \bu\|, Re=100, \Dt t=0.005$ with Algorithm 1 (constant time-step, variable $\epsilon$). No penalty uses Backward Euler with grad-div stabilization \eqref{grad-div-stab} with $\Dt t=0.001$. Tests without penalty use 320 mesh points around the outer circle and 80 mesh points around the inner circle. The finite element discretization has a maximal mesh width of $h_{max}=0.0347224$.}
    \label{fig:flow-test}
\end{figure}
For both estimators $EST=\|\nabla\cdot \bu\|$ and $EST=\|\nabla\cdot \bu\|/\|\nabla \bu\|$, we use constant time-step variable $\epsilon$ (Algorithm 1) with the same tolerance $TOL$ and lower tolerance $minTOL$. We track the evolution of $\epsilon$, the pressure at the origin, the evolution of $\|\nabla\cdot u\|,$ $\|u_t\|$ and the lift, drag coefficients. These are all shown in \Cref{fig:flow-test}. 

The fourth plot in \Cref{fig:flow-test} shows that with $EST=\|\nabla\cdot u\|/\|\nabla u\|$ chooses larger $\epsilon$ values than with estimator $EST=\|\nabla\cdot u\|$. The evolution of pressure, lift and drag coefficients behave similarly for both estimators.
$\|\nabla\cdot\bu\|$ is smaller using adaptive $\epsilon$ penalty algorithm (Algorithm 1) than using Backward Euler with grad-div stabilization \eqref{grad-div-stab}. $\|\nabla\cdot\bu\|$ from the penalty method is at least 5 times smaller $\|\nabla\cdot\bu\|$ from coupled Backward Euler with grad-div stabilization \eqref{grad-div-stab}. The adaptive penalty method has better control of $\|\nabla\cdot\bu\|$ than the coupled Backward algorithm with grad-div stabilization term \eqref{grad-div-stab}. Lift coefficient calculated from adaptive $\epsilon$ penalty method looks good. 

\section{Conclusions and open problems} This paper presents a stability and error analysis for the adaptive $\epsilon$ penalty method. Also, four different algorithms for both constant and variable time-step were introduced. There remain open problems and algorithmic improvements possible in the future. In this paper, we introduced the adaptive $\epsilon$ scheme with a condition from stability analysis which could ensure the stability of the result. It is unclear how sharp this bound is, or if the restriction \eqref{epsilon-restriction} is necessary in all time-steps. Further, by rejecting and repeating steps to guarantee EST<TOL result in violating the restriction \eqref{epsilon-restriction}. For different time-step, the problem has a different optimal $\epsilon$ value. An algorithm that adapts $\epsilon$ and k independently may be inferior to one that relates the step size to the penalty parameter. However, it there is not an obvious relation between $\epsilon$ and k, so further research may be necessary to find a more efficient doubly adaptive algorithm.
The pressure recovered directly from the continuity equation, $\nabla\cdot\bu+\epsilon p=0$ \eqref{pnse1} is not good estimate to the pressure from coupled system. We can look into alternate ways to recover the pressure, such as using the Pressure Poisson equation (PPE) see Kean and Schneier  \cite{Kean2019error}.

\section*{Acknowledgement}
We would like to thank Professor William Layton for suggesting the problem and for his help throughout the research.
\bibliographystyle{plain}
\bibliography{main}

\Appendix

\section{Detailed Algorithms}
\begin{algorithm}[h]
\SetAlgoLined
Given $\bu_n,\bu_{n-1},\epsilon_{n+1},\epsilon_n$, tolerance for $\epsilon$: TOL=$10^{-6}$ and lower tolerance minTOL=TOL/10, lower and upper bound of $\epsilon: \epsilon_{min}=10^{-8},\epsilon_{max}=10^{-5}, \alpha=2$,tolerance for $\Delta t$: tTOL=$10^{-5}$ and lower tolerance mintTOL=tTOL/10  \\
\textit{Compute} $\tau=\frac{k_{n+1}}{k_n}$ and $\alpha_1=\frac{\tau(1+\tau)}{1+2\tau}$\\
Solve for $\bu_{n+1}^1$ \\
Set $\bu^\star=(1+\tau)\bu_n-\tau\bu_{n-1}$
\begin{align*}
    \frac{\bu_{n+1}^1-\bu_n}{k_{n+1}}+\bu^*\cdot\nabla\bu_{n+1}^1+\frac{1}{2}(\nabla\cdot\bu^*)\bu_{n+1}^1-\nabla\left(\frac{1}{\epsilon_{n+1}}\nabla\cdot\bu_{n+1}^1\right)-\nu\Delta\bu_{n+1}^1=f_{n+1}.
\end{align*}
Compute estimator EST and difference $D_2$
\begin{align*}
    D_2(n+1)&=\frac{2k_n}{k_n+k_{n+1}}\bu_{n+1}^1-2\bu_n+\frac{2k_{k+1}}{k_n+k_{n+1}}\bu_{n-1} ,\\
    EST_e(n+1)&=\|\nabla\cdot \bu_{n+1}\|/\|\nabla\bu_{n+1}\|, \\
    tEST_1(n+1)&=\frac{\alpha_1}{2}\|D_2(n+1)\|.
\end{align*}
Adapt $\epsilon$ and $k$ using the standard decision tree:\\
\eIf {$EST_e(n+1) > TOL$ or $tEST_1(n+1) > tTOL$} {
    $
    \epsilon_{n+1} \gets \max\{(1-\alpha k_{n+1})\epsilon_{n+1}, \; 0.5\epsilon_{n+1}, \epsilon_{min}\} 
    $\;
    $
    k_{n+1} \gets \max\left\{0.9k_n\left(\frac{tTOL}{tEST_1(n+1)}\right)^{1/2},0.5k_{n+1}\right\} 
    $ \;
    REPEAT step \\
    }{
    \If {$EST_{n+1} < minTOL$ or $tEST_{n+1} < min tTOL$}{
        $\epsilon_{n+2} \gets \min\{2\epsilon_{n+1},\epsilon_{max}\}$ \;
        $
        k_{n+2} \gets \max\left\{\min\left\{0.9k_{n+1}\left(\frac{tTOL}{tEST_1(n+1)}\right)^{1/2},2k_{n+1}\right\},0.5k_{n+1}\right\}
        $ \;
        CONTINUE \;
        }
    }
Recover pressure $p_{n+1}$ if needed by:
$
p_{n+1}=-\frac{1}{\epsilon_{n+1}}\nabla\cdot\bu_{n+1}.
$
\caption{Variable $\epsilon$, variable time-step, first-order penalty method}
\end{algorithm}

\begin{algorithm}[h]
\SetAlgoLined
Given $\bu_n,\bu_{n-1},\epsilon_{n+1},\epsilon_n$, tolerance for $\epsilon$ TOL=$10^{-6}$ and lower tolerance minTOL=TOL/10, lower and upper bound of $\epsilon: \epsilon_{min}=10^{-8},\epsilon_{max}=10^{-5}, \alpha=2$,tolerance for $\Delta t$: tTOL=$10^{-5}$ and lower tolerance mintTOL=tTOL/10  \\
\textit{Compute} $\tau=\frac{k_{n+1}}{k_n}$ and $\alpha_1=\frac{\tau(1+\tau)}{1+2\tau},\alpha_2=\frac{\tau_n(\tau_{n+1}\tau_n+\tau_n+1)(4\tau_{n+1}^3+5\tau_{n+1}^2+\tau_{n+1})}{3(\tau_n\tau_{n+1}^2+4\tau_n\tau_{n+1}+2\tau_{n+1}+\tau_n+1)}$ \\
Set $\bu^*=(1+\tau)\bu_n-\tau\bu_{n-1}$ \\
Solve for $\bu_{n+1}^1$ \\
\begin{align*}
    \frac{\bu_{n+1}^1-\bu_n}{k_{n+1}}+\bu^*\cdot\nabla\bu_{n+1}^1+\frac{1}{2}(\nabla\cdot\bu^*)\bu_{n+1}^1-\nabla\left(\frac{1}{\epsilon_{n+1}}\nabla\cdot\bu_{n+1}^1\right)-\nu\Delta\bu_{n+1}^1=f_{n+1}.
\end{align*}
Compute estimator EST and difference $D_2$ and apply time filter
\begin{align*}
    D_2(n+1)&=\frac{2k_n}{k_n+k_{n+1}}\bu_{n+1}^1-2\bu_n+\frac{2k_{k+1}}{k_n+k_{n+1}}\bu_{n-1}, \\
    \bu_{n+1}&=\bu_{n+1}^1-\frac{\alpha_1}{2}D_2(n+1), \\
    EST_e(n+1)&=\|\nabla\cdot \bu_{n+1}\|/\|\nabla\bu_{n+1}\| ,\\
    tEST_2(n+1)&=\frac{\alpha_2}{6}\left\|\frac{3k_{n-1}}{k_{n+1}+k_n+k_{n-1}}D_2(n+1)-\frac{3k_{n+1}}{k_{n+1}+k_n+k_{n-1}}D_2(n)\right\|.
\end{align*}
\eIf {$EST_e(n+1) > TOL$ or $tEST_2(n+1) > tTOL$} {
    $
    \epsilon_{n+1} \gets \max\{(1-\alpha k_{n+1})\epsilon_{n+1},  0.5\epsilon_{n+1}, \epsilon_{min}\} 
    $\;
    $
    k_{n+1} \gets \max\left\{0.9k_n\left(\frac{tTOL}{tEST_1(n+1)}\right)^{1/3},0.5k_{n+1}\right\} 
    $ \;
    REPEAT step \\
    }{
    \If {$EST_{n+1} < minTOL$ or $tEST_{n+1} < min tTOL$}{
        $\epsilon_{n+2} \gets \min\{2\epsilon_{n+1},\epsilon_{max}\}$ \;
        $
        k_{n+2} \gets \max\left\{\min\left\{0.9k_{n+1}\left(\frac{tTOL}{tEST_1(n+1)}\right)^{1/3},2k_{n+1}\right\},0.5k_{n+1}\right\}
        $ \;
        CONTINUE \;
        }
    }
Recover pressure $p_{n+1}$ by: 
$
     p_{n+1}=-\frac{1}{\epsilon_{n+1}}\nabla\cdot\bu_{n+1}.
$
\caption{Variable $\epsilon$, variable time-step, second-order penalty method}
\end{algorithm}
\end{document}